\theoremstyle{plain}
\newtheorem{theorem}{Theorem}
\newtheorem{proposition}{Proposition}[section]
\newtheorem{lemma}[proposition]{Lemma}
\newtheorem{definition}{Definition}[section]
\theoremstyle{definition}
\newtheorem{remark}{Remark}
\numberwithin{equation}{section}
\newcommand\N{{\mathbb N}}
\newcommand\R{{\mathbb R}}
\newcommand\T{{\mathbb T}}
\newcommand\C{{\mathbb C}}
\newcommand{\cE}{\mathcal E}
\newcommand{\cL}{\mathcal L}
\newcommand{\cO}{\mathcal O}
\newcommand{\cR}{\mathcal R}
\def\eps{{\varepsilon}}
\renewcommand\eps{\epsilon}
\newcommand{\Real}{\mathbb R}
\newcommand{\Complex}{\mathbb C}
\newcommand{\norm}[1]{\left\lVert#1\right\rVert}
\newcommand{\abs}[1]{\left\vert#1\right\vert}
\newcommand{\set}[1]{\left\{#1\right\}}
\newcommand{\grad}{\nabla}
\newcommand{\brak}[1]{\langle #1 \rangle}
\newcommand{\dss}{\displaystyle}
\newcommand{\var}{\varepsilon}
\begin{document}
 
\title{Linearized wave-damping structure of Vlasov-Poisson in
  $\Real^3$} \author{Jacob
  Bedrossian\thanks{\footnotesize Department of Mathematics, University of Maryland, College Park, MD 20742, USA \href{mailto:jacob@math.umd.edu}{\texttt{jacob@math.umd.edu}}. J.B. was supported by NSF CAREER grant DMS-1552826 and NSF RNMS \#1107444 (Ki-Net)} \,
  Nader Masmoudi\thanks{\footnotesize NYUAD Research Institute, New York University Abu Dhabi, PO Box 129188, Abu Dhabi,   United Arab Emirates.
    Courant Institute of Mathematical Sciences, New York University, 251 Mercer Street, New York, NY 10012, USA. \textit{masmoudi@cims.nyu.edu}. The work of N. M is supported by  NSF grant DMS-1716466 and  by Tamkeen under the NYU Abu Dhabi Research Institute grant
of the center SITE.}
  \, and Cl\'ement Mouhot\thanks{\footnotesize Centre for Mathematical Sciences, University of Cambridge.  \textit{c.mouhot@dpmms.cam.ac.uk} Partially funded by ERC grant MAFRAN.}  }

\date{\today}
\maketitle

\begin{abstract}
  In this paper we study the linearized Vlasov-Poisson equation for
  localized disturbances of an infinite, homogeneous Maxwellian
  background distribution in $\Real^3_x \times \Real^3_v$.  In
  contrast with the confined case $\T^d _x \times \R_v ^d$, or the
  unconfined case $\R^d_x \times \R^d_v$ with screening, the dynamics
  of the disturbance are not scattering towards free transport as
  $t \to \pm \infty$: we show that the electric field decomposes into
  a very weakly-damped Klein-Gordon-type evolution for long waves and
  a Landau-damped evolution. The Klein-Gordon-type waves solve, to
  leading order, the compressible Euler-Poisson equations linearized
  about a constant density state, despite the fact that our model is
  collisionless, i.e. there is no trend to local or global
  thermalization of the distribution function in strong topologies. We
  prove dispersive estimates on the Klein-Gordon part of the
  dynamics. The Landau damping part of the electric field decays
  faster than free transport at low frequencies and damps as in the
  confined case at high frequencies; in fact, it decays at the same
  rate as in the screened case. As such, neither contribution to the
  electric field behaves as in the vacuum case. 
\end{abstract}


\setcounter{tocdepth}{2} 
{\small\tableofcontents}

\section{Introduction}\label{sec:Intro}
\subsection{The problem at hand}


One of the fundamental equations in the kinetic theory of plasmas is
the Vlasov-Poisson equations for an infinitely extended plasma (see
e.g. \cite{GoldstonRutherford95,BoydSanderson}),
\begin{equation}
  \label{def:VPE}
  \left\{
    \begin{array}{l} \dss 
      \partial_t f + v\cdot \grad_x f + E(t,x)\cdot \grad_v f = 0, \\[2mm]
      \dss E(t,x) = -\grad_x W \ast_{x} \rho(t,x), \\
      \dss \rho(t,x) = \int_{\R^3} f(t,x,v) dv - n_0, \\[3mm]
      f(t=0,x,v) = f_{in}(x,v),
    \end{array}
  \right.
\end{equation}
for the time-dependent probability density function $f(t,x,v) \ge 0$
of the electrons in the phase space $(x,v) \in \R^3 \times \R^3$,
$n_0$ is the number density of the constant ion background, and where
$W$ is the kernel of the Coulomb interaction\footnote{Note that we are slightly abusing notation as $E$ in \eqref{def:VPE} actually denotes the acceleration of the electrons due to the electric field, not the electric field itself. This convention is taken for the remainder of the paper.}.
\begin{align}
  \label{def:Coulomb}
  W(x) = \frac{q^2}{4 \pi \epsilon_0 m_e \abs{x}}
\end{align}
with $q$ the electron charge, $m_e$ the electron mass, and
$\epsilon_0$ the vacuum permittivity. 
We will consider \eqref{def:VPE} linearized about the homogeneous Maxwellian background
with fixed temperature $T$
\begin{align}
  \label{eq:f0}
  f^0(v) := \frac{n_0}{(2\pi T)^{3/2}} e^{-\frac{m_e \abs{v}^2}{2T}}. 
\end{align}
In 1930s and 1940s, Vlasov~\cite{Vlasov-damping,Vlasov1945} suggested
to neglect collisions and derive the so-called Vlasov-Poisson equation
for long-range interactions, independently from Jeans'
derivation~\cite{jeans1915theory} in stellar dynamics. Motivated by
the mathematical understanding of the plasma oscillations previously
theorized in particular by Langmuir, he studied the linearized
approximation of~\eqref{def:VPE} (see~\eqref{def:VPElin} below)
formally searching for eigenmodes in the form of planar waves
$e^{-i\omega t + ikx}F(v)$, given some velocity distribution $F$, and
computed various dispersion relations (see for instance equation~(50)
in~\cite{Vlasov-damping}). He asserted, not quite correctly but
\emph{almost correctly} as we shall see and clarify in this paper,
that in the long-wave limit $\abs{k} \ll 1$, where $k$ is the Fourier
variable in space, one has the dispersion relation
\begin{align}
  \label{eq:BG}
  \omega^2 = \omega_p^2 +  \frac{3T}{m_e} \abs{k}^2 +
  \cO\left(\abs{k}^4\right)
  \quad \textup{ as } k \to 0, \quad \text{ where } \quad
  \omega_p := \frac{q^2 n_0}{\epsilon_0 m_e}
\end{align}
is the \emph{cold plasma frequency}. The relation
$\omega^2 = \omega_p^2 + \frac{3T}{m_e} \abs{k}^2$ is called the
\emph{Bohm-Gross dispersion relation}\footnote{Technically, the
  relation seems to have first appeared in Vlasov's earlier
  work~\cite{Vlasov-damping} before appearing in \cite{BG49A},
  however, it is likely that access to Vlasov's work was difficult at
  that time. See also e.g. pg 260 of \cite{GoldstonRutherford95}.},
and arises from the following compressible Euler-Poisson for the
spatial density $n(t,x) \ge 0$ and macroscopic velocity
$u(t,x) \in \R^3$
\begin{align*}
  \left\{
    \begin{array}{l} \dss 
      \partial_t n + \nabla_x \cdot \left( n  u \right) = 0 \\[2mm]
      \dss m_e n \left[ \partial_t u + (u \cdot \nabla_x)u \right] = q
      n E(t,x) - 3 T \grad_x n \\[2mm]
      \dss \epsilon_0 \grad_x \cdot E(t,x) = q n
    \end{array}
  \right.
\end{align*}
when linearized around a constant density state $n_0 >0$, $u_0 =0$:
\begin{align}
  \label{eq:lin-Euler}
  \left\{
    \begin{array}{l} \dss 
      \partial_t n + n_0 \grad_x \cdot u = 0 \\[2mm]
      \dss m_e n_0 \partial_t u = q n_0 E(t,x) - 3T\grad_x n \\[2mm]
      \dss \epsilon_0 \grad_x \cdot E(t,x) = q n.
    \end{array}
  \right.
\end{align}
This model is sometimes referred to as a \emph{warm plasma model} in
the physics literature see e.g. [Chapter 16;
\cite{GoldstonRutherford95}] for more detail. Vlasov's prediction was
shown to be incomplete by Landau in 1946 \cite{Landau46}, who showed
that in fact the linearized electric field decays to zero as
$t\to\infty$, a phenomenon now known as \emph{Landau damping}, and in
particular that non-trivial planar waves never satisfy the
dynamics. The damping arises due to \emph{mixing/filamentation} in phase
space, not unlike a scalar quantity being stirred in a fluid (as first
pointed out in \cite{VKampen55}). Landau damping was later observed in
experiments~\cite{MalmbergWharton64,MalmbergWharton68} and is
considered one of the most important properties of collisionless
plasmas~\cite{Ryutov99,GoldstonRutherford95,Swanson}.

On $\mathbb T^3 \times \mathbb R^3$, the linearized electric field
decays rapidly provided the initial data is regular, specifically, its
decay in time is comparable to the decay in Fourier variable
associated with $v$ of the initial data~\cite{MouhotVillani11,BMM13}.
Analogous estimates are also true for the density on
$\mathbb R^3 \times \mathbb R^3$ if one uses a model with Debye
shielding (this arises when studying the ion distribution function),
i.e. where $W$ is the fundamental solution to the elliptic problem
$-\Delta W + \alpha W = \delta$ for some constant $\alpha> 0$ (see
\cite{BMM16,HKNR19} and the references therein). However, such rapid
decay estimates on the linearized electric field are false in
$\mathbb R^3 \times \mathbb R^3$ for Vlasov-Poisson. Landau himself
predicted an extremely slow Landau damping of long waves approximately
solving the dispersion relation \eqref{eq:BG} (see also
e.g. \cite{GoldstonRutherford95,Swanson} for modern exposition in the
physics literature), something that we will make much more precise
below.  This lack of significant Landau damping was proved rigorously
by Glassey and Schaefer~\cite{Glassey94,Glassey95}, though a precise
characterization of the dynamics was still lacking.

In this work, we precise the linearized dynamics and prove that the
linearized electric field can be split into two contributions: one
contribution at long spatial waves $k \sim 0$ that is a
Klein-Gordon-like propagation matching \eqref{eq:BG} to leading order
with very weak Landau damping with rate ``$\mathcal{O}(|k|^\infty)$''
and another contribution, which is properly Landau damping and decays
at a rate \emph{faster} than kinetic free transport for long-waves
$k \sim 0$, i.e. faster than Vlasov-Poisson equation linearized around
$f^0 = 0$, $n_0=0$.
In particular, our work shows that the hydrodynamic description
(linearized Euler-Poisson) in fact is the leading order description of
the electric field at long waves, despite the lack of collisional
effects. As such, we remark that Vlasov was essentially correct to
leading order in his prediction of the long wave dynamics of
\eqref{def:VPElin}.  For long waves, we show that the distribution
function decomposes (to leading order) into two pieces: a term that
factorizes as $\tilde{E}(t,x)\cdot \grad_v f^0(v)$ (where $f^0$ is the
Maxwellian background) where $\tilde{E}$ solves a Klein-Gordon-type
equation (including an additional tiny damping) and a separate
contribution that scatters to free transport in $L^p_{x,v}$ for
$p > 6$.  Hence, the hydrodynamic behavior arises from a large-scale
collective motion of the plasma that is insensitive to the
filamentation in phase space normally associated with Landau damping.

Understanding the relationship between Landau damping (or other
kinetic effects) and the observed large-scale hydrodynamic behavior in
collisionless plasmas has been an area of research in the physics
literature for some time (see
e.g. \cite{HP90,HDP92,SHD97,HunanaEtAl18} and the references therein).
Our work provides a precise description of the hydrodynamic behavior
and its leading order corrections due to Landau damping and other kinetic effects, for the simple linearized problem; studies of more
physical settings (e.g. external and/or self-consistent magnetic
fields, inclusion of ions, inhomogeneous backgrounds etc) and/or the
inclusion of nonlinear effects is an interesting direction of further
research.

Our work is only linear, but we remark that much progress has been
made at the nonlinear level in recent years in other settings.  After
the earlier work of \cite{CagliotiMaffei98} (see
also~\cite{HwangVelazquez09}), the major breakthrough came in
\cite{MouhotVillani11}, when Landau damping for the nonlinear problem
was shown on $\mathbb T^3 \times \mathbb R^3$ for all sufficiently
small and smooth initial perturbation of Landau-stable stationary
solutions; the actual smoothness required being Gevrey or analytic. We
also refer to~\cite{BMM13,GNR20} for simplified proofs and~\cite{B17}
for a study of the problem with collisions. The work~\cite{B16} shows
that the results therein do not hold in finite regularity (see also
\cite{GNR20II}). Our previous work \cite{BMM16} studied the nonlinear
problem with shielding on $\mathbb R^3 \times \mathbb R^3$; see also
\cite{HKNR19,N20} for an alternative approach and some
refinements. Other works, old and new, have studied the nonlinear
Cauchy problem near the vacuum state, that is without the presence of
a non-zero spatially homogeneous background equilibrium $f^0$, see
e.g. \cite{BardosDegond1985,IPWW20}.  As can be seen from our results
below, the dynamics are significantly different in this case.

\subsection{Main results}

We linearize the Vlasov-Poisson equation~\eqref{def:VPE} on
$\Real^3_x \times \Real^3_v$ around an infinitely-extended,
homogeneous background $f^0(v) \ge 0$. This models a spatially
localized disturbance of an infinite plasma in which collisions can be
neglected, a fundamental problem in the kinetic theory of
plasmas~\cite{GoldstonRutherford95,BoydSanderson,Swanson}.
For a localized disturbance $h := f^0 -f$, the \emph{linearized}
Vlasov-Poisson equations for the electron distribution function are
given by
\begin{equation}
  \label{def:VPElin}
  \left\{
    \begin{array}{l} \dss 
      \partial_t h + v\cdot \grad_x h + E(t,x)\cdot \grad_v f^0 = 0, \\[2mm]
      \dss E(t,x) = -\grad_x W \ast_{x} \rho(t,x), \\
      \dss \rho(t,x) = \int_{\R^3} h  (t,x,v) dv, \\[3mm]
      \dss h(t=0,x,v) = h_{in}(x,v),
    \end{array}
  \right.
\end{equation}
where we take the charge neutrality assumption
$\int_{\R^3 \times \R^3} h_{in}(x,v) dx dv = 0$.
Define the standard plasma constants (number density, plasma frequency
and temperature):
\begin{align*}
  n_0 := \widehat{f^0}(0), \quad
  \omega_p^2 := \frac{q^2 n_0}{\epsilon_0 m_e}, \quad 
  T := \frac{m_e}{3n_0} \int_{\mathbb R^3} \abs{v}^2 f^0(v) dv, 
\end{align*}
and as stated above, we assume the Maxwellian distribution background
\eqref{eq:f0}.

Our main results are asymptotic decomposition of the electric field
and the distribution function; we give slightly simplified statements
for readability and refer to the main body of the paper for more
detailed expansions. See Subsection~\ref{subsec:notation} for the
notation $\langle x \rangle$, $\langle x,y \rangle$, $\langle
\nabla\rangle$, and $W^{k+0,p}_{w}$.

We remark that another preprint proving similar results with somewhat
different methods \cite{HKNF20} has recently been completed as well.
These works have been completed totally independently.

\begin{theorem} \label{thm:ElecDec} Suppose
  $\int_{\R^3 \times \R^3} h_{in} dx dv = 0$ and let $h$ and $E$ solve
  \eqref{def:VPElin}.  There is $\delta_0>0$ and a decomposition of
  the electric field $E = E_{KG} + E_{LD}$ between a `Klein-Gordon'
  and `Landau damped' parts, with $E_{KG}$ supported in spatial
  frequencies $|k|< \nu_0$ and:
  \begin{itemize}
  \item $E_{LD}$ satisfies the following \textbf{Landau-damping-type
      decay estimates} for any $\sigma \in \N$:
    \begin{align}
      \label{ineq:EFTbdL2}
      \norm{\brak{\grad_x, t\grad_x}^\sigma E_{LD}(t)}_{L^2_x}
      & \lesssim \frac{1}{\brak{t}^{5/2}}
        \norm{h_{in}}_{W^{\sigma+ \frac{3}{2} +0,1}_{0}}  \\
      \label{ineq:EFTbd}
      \norm{\brak{\grad_x, t\grad_x}^\sigma E_{LD}(t)}_{L^\infty_x}
      & \lesssim \frac{1}{\brak{t}^{4}} \norm{h_{in}}_{W^{\sigma+3+0,1}_{0}}. 
    \end{align}
  \item $E_{KG}$ further decomposes as $E_{KG} = E_{KG}^{(1)} +
    E_{KG}^{(2)}$ with the \textbf{pointwise-in-time estimates}:
    \begin{align*}
      & \norm{E_{KG}^{(1)}(t)}_{L^2_x}
        \lesssim \norm{E_{in}}_{L^2} + \norm{v h_{in}}_{L^2_{x,v}} +
        \norm{h_{in}}_{W^{0,1}_{4}} \\
      & \norm{E_{KG}^{(2)}(t)}_{L^2_x} \lesssim
        \norm{h_{in}}_{W^{0,1}_{5}} \\
      & \norm{E_{KG}^{(2)}(t)}_{L^{\infty}_x}
        \lesssim \brak{t}^{-3/2} \norm{h_{in}}_{W^{3/2,1}_{5}} \\
      & \norm{E_{KG}^{(1)}(t)}_{L^{p}_x} \lesssim_p
        \brak{t}^{-3\left(\frac12 - \frac{1}{p}\right)}
        \norm{\brak{x} h_{in}}_{W^{0,1}_{4}} \quad (\forall \, 2 \leq p < \infty) \\ 
      & \norm{\grad_x E_{KG}^{(1)}(t)}_{L^{\infty}_x} \lesssim
        \brak{t}^{-3/2} \norm{h_{in}}_{W^{0,1}_{4}}.
    \end{align*}
  \item
    $E_{KG}^{(1)}$ solves a weakly damped Klein-Gordon type equation
    in the following sense: there are bounded, smooth functions
    $\lambda, \Omega$, $k \in
    B(0,\delta_0)$ such that (for errors independent of $t$),
    \begin{align*}
      & \widehat{E_{KG}^{(1)}}(t,k) = \widehat{E}_{in}(k)
        e^{-\lambda(k) t}
        \cos \left( \Omega(k) t \right)
        + e^{-\lambda(k) t} \frac{ik}{\abs{k}^2} \left( k \cdot \grad_\eta
        \widehat{h_{in}}(k,0) \right)
         \frac{\sin \left(\Omega(k) t
        \right)}{\Omega(k)}  \\
      & \qquad\qquad\qquad + \cO(\abs{k}^2)e^{-\lambda(k) t+
        i\Omega(k) t}
        + \cO(\abs{k}^2)e^{-\lambda(k) t - i\Omega(k) t}, \\
    & \text{where } \quad \lambda(k) >0, \quad \lambda(k) = \cO\left( \abs{k}^\infty
      \right), \quad  \Omega^2(k)  = \omega^2_p + \frac{3T}{m_e}\abs{k}^2 + \cO\left(\abs{k}^4 \right)
      \quad \text{ as } \quad k \to 0.
    \end{align*}
    Furthermore, here holds for all $2 < p \leq \infty$, 
    \begin{align*}
      \norm{(-\Delta_x)^{-1}E_{KG}^{(2)}(t)}_{L^p}
      \lesssim
      \brak{t}^{-3\left(\frac{1}{2}-\frac{1}{p} \right)} \norm{h_{in}}_{W^{3/2,1}_{5}}, 
    \end{align*}
    which further emphasizes that for small $k$, $E_{KG}^{(1)}$ is much larger than $E_{KG}^{(2)}$.
\end{itemize}
\end{theorem}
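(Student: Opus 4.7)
The plan is to treat the whole result as a Fourier--Laplace analysis of the linearized Vlasov--Poisson system and extract a contour-deformation decomposition whose residue part is $E_{KG}$ and whose shifted-line remainder is $E_{LD}$.

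First I would Fourier transform $\eqref{def:VPElin}$ in $x$, solve the transport equation for each mode $k$ by Duhamel, and integrate in $v$ to obtain the standard scalar Volterra equation
\begin{equation*}
\widehat{\rho}(t,k)=\widehat{h_{in}}(k,-kt)+\int_0^t K(t-s,k)\,\widehat{\rho}(s,k)\,ds,\qquad K(t,k)=\widehat{W}(k)\,|k|^2\,t\,\widehat{f^0}(kt),
\end{equation*}
with $\widehat{W}(k)=\omega_p^2/(n_0|k|^2)$ up to constants. Taking the Laplace transform in $t$ gives $\tilde\rho(\omega,k)=\tilde H(\omega,k)/\mathcal{D}(\omega,k)$ with dispersion function $\mathcal{D}(\omega,k)=1-\tilde K(\omega,k)$, and the electric field comes back by inverse Laplace on a line $\Re\omega=\gamma$ to the right of all singularities. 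The first step is therefore to study $\mathcal{D}(\omega,k)$ and locate its zeros: using the explicit Maxwellian background, $\mathcal D$ extends analytically across $\Re\omega=0$ for small $|k|$, and one obtains a pair of complex-conjugate roots $\omega_\pm(k)=\mp i\Omega(k)-\lambda(k)$ with $\Omega(k)^2=\omega_p^2+(3T/m_e)|k|^2+\mathcal{O}(|k|^4)$ from an implicit function / perturbation-in-$|k|$ argument around the cold roots $\pm i\omega_p$; the fact that $f^0$ is Gaussian forces $\lambda(k)=\mathcal{O}(|k|^\infty)$ because the Plasma dispersion function has exponentially small imaginary part along the real axis.

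Next I would fix a smooth spatial frequency cutoff $\chi(k)$ supported in $|k|<\nu_0$ with $\nu_0$ smaller than the radius of analyticity provided by Step 1, and for such $k$ deform the Bromwich contour past $\omega_\pm(k)$ down to a line $\Re\omega=-\mu(k)$ below the poles. This produces the decomposition
\begin{equation*}
\widehat{E}(t,k)=\chi(k)\big[\text{Res}_{\omega_+}+\text{Res}_{\omega_-}\big]+\chi(k)\,\widehat{E}_{\mathrm{rem}}(t,k)+(1-\chi(k))\widehat{E}(t,k),
\end{equation*}
where $E_{KG}$ is defined as the residue piece and $E_{LD}$ gathers the contour remainder together with the high-frequency part. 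Expanding the numerator $\tilde H(\omega_\pm(k),k)$ around $\omega_\pm$ and around $k=0$ (keeping the terms $\widehat{E}_{in}(k)$ and $(ik/|k|^2)(k\cdot\nabla_\eta\widehat{h_{in}}(k,0))$, the latter being $\Omega(k)^{-1}\times$ velocity-moment contribution) yields exactly the stated expression for $\widehat{E^{(1)}_{KG}}$, with an $\mathcal{O}(|k|^2)$ error absorbed into $E^{(2)}_{KG}$. The pointwise-in-time $L^2$ bounds then follow from Plancherel (noting $|k|^{-1}(k\cdot\nabla_\eta\widehat{h_{in}}(k,0))$ is controlled by $\|vh_{in}\|_{L^2_{x,v}}$ and $\widehat{h_{in}}(k,0)/|k|$ by $\|x h_{in}\|_{L^1}$, which explains the $W^{0,1}_4,W^{0,1}_5$ weights from $\brak{v}^4,\brak{v}^5$-type moments needed to control derivatives of $\widehat{h_{in}}(k,\eta)$ in $\eta$ near $0$).

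For the dispersive $L^p$ estimates on $E^{(1)}_{KG}$ and on $(-\Delta_x)^{-1}E^{(2)}_{KG}$ I would apply standard Klein--Gordon dispersive theory: since $\Omega\in C^\infty(B(0,\delta_0))$ with nondegenerate Hessian of order one in $|k|^2$, the operator $e^{i\Omega(D)t}$ satisfies $\|e^{i\Omega(D)t}\varphi\|_{L^p}\lesssim \brak{t}^{-3(\frac12-\frac1p)}$ for inputs spatially localized at frequencies $\lesssim \nu_0$ (the usual stationary phase bound on a compact frequency support, combined with the tiny damping factor $e^{-\lambda(k)t}$ which is harmless since $\lambda\geq 0$). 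This yields both the $L^p_x$ decay for $E^{(1)}_{KG}$ with an $\brak{x}$ weight on the data (to accommodate the $\widehat{h_{in}}(k,0)/|k|$ factor) and the improved $L^p$ estimate for $(-\Delta_x)^{-1}E^{(2)}_{KG}$, the gain of two derivatives being exactly the extra $|k|^2$ in the amplitude. Finally, for $E_{LD}$ I would estimate the contour integral along $\Re\omega=-\mu(k)$: on $\mathrm{supp}(1-\chi)$ the dispersion function is uniformly bounded below and a routine stationary-phase/non-stationary-phase analysis of $\int e^{ik\cdot x-\omega t}\tilde H/\mathcal D \,d\omega \,dk$ produces the confined-case rates, while on $\mathrm{supp}\,\chi$ the analyticity gain $e^{-\mu(k)t}$ combined with the smoothness of the integrand gives rapid decay at low frequencies; carefully tracking derivatives in $k$ then yields both the $\brak{\nabla_x,t\nabla_x}^\sigma$ factor and the $L^2,L^\infty$ rates $\brak{t}^{-5/2},\brak{t}^{-4}$.

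The main obstacle I expect is the uniform control of the dispersion function $\mathcal D(\omega,k)$ on the deformed contour for small $|k|$: one must simultaneously show that $\omega_\pm(k)$ are the only singularities inside the chosen domain, that $\mathcal D$ is bounded below away from them (with explicit dependence on $k$ so the $(-\Delta)^{-1}$-type singularity of $\widehat W$ is compensated), and that all derivatives with respect to $k$ needed for the $\sigma$-level estimates on $E_{LD}$ behave well. A secondary but nontrivial difficulty is the separation $E_{KG}^{(1)}$ vs $E_{KG}^{(2)}$: it requires a careful Taylor expansion of the residues in $k$ together with matching bounds on the remainder in a norm strong enough to support both pointwise and dispersive estimates, which is what forces the weighted $W^{3/2,1}_5$ norm in the statement.
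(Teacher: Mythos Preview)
Your overall architecture---Volterra equation, Laplace transform, locate the poles $p_\pm(k)$ by the implicit function theorem, deform the contour, residues give $E_{KG}$, remainder gives $E_{LD}$, then stationary phase for the dispersive decay---is exactly the paper's strategy. The gap is in your treatment of $E_{LD}$ at low frequencies, and it is the heart of the matter.

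You write that for $|k|<\nu_0$ ``the analyticity gain $e^{-\mu(k)t}$ combined with the smoothness of the integrand gives rapid decay''. This is not enough. The contour can only be pushed to $\Re z=-\mu(k)$ with $\mu(k)\sim|k|$, so the exponential factor is really $\langle kt\rangle^{-N}$. Combined with the Coulomb weight $|k|^{-1}$ this yields, in $L^\infty_x$, only
\[
\int_{|k|\lesssim 1}|k|^{-1}\langle kt\rangle^{-N}\,dk\;\sim\;t^{-2},
\]
i.e.\ the free-transport rate, not $\langle t\rangle^{-4}$. Two genuinely new ingredients are needed, and your proposal does not contain them:

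\emph{(i) A cancellation between the free-transport source and the residues.} If one writes $\hat\rho=\widehat{h_{in}}(k,kt)+\int_0^t\mathcal R(t-\tau,k)\widehat{h_{in}}(k,k\tau)\,d\tau$ and expands the pole part $\mathcal R_{KG}^\pm=J_\pm(k)e^{p_\pm(k)t}$ by repeated integration by parts \emph{in the time variable} $\tau$, one produces not only the oscillatory terms $e^{p_\pm t}A_j^\pm(k)\,k^{\otimes j}:\nabla_\eta^j\widehat{h_{in}}(k,0)$ (your $E_{KG}^{(1)}$), but also boundary terms at $\tau=t$ of free-transport type, $-A_j^\pm(k)\,k^{\otimes j}:\nabla_\eta^j\widehat{h_{in}}(k,kt)$. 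The crucial identity is $A_0^+(k)+A_0^-(k)=1+\mathcal O(|k|^2)$: the $j=0$ boundary terms \emph{cancel} the raw free-transport source to order $|k|^2$. This is what upgrades the low-$k$ Landau damping rate. Your description ``expand $\tilde H(\omega_\pm(k),k)$ around $k=0$'' misses these boundary terms entirely and hence misses the cancellation.

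\emph{(ii) A $|k|^3$ gain on the shifted-contour resolvent.} One must show $|\mathcal R_{RFT}(t,k)|\lesssim|k|^3 e^{-\lambda_0|k|t}$ for $|k|<\nu_0$. This is not automatic from ``$\mathcal D$ bounded below on the contour''; it requires subtracting from $\mathcal L/(1-\mathcal L)$ an explicit function $\mathcal Q(z)=\omega_p^2/(z^2+\omega_p^2)+\alpha|k|^2/(z^2+\omega_p^2)^2$ that is holomorphic to the \emph{left} of the contour (so contributes nothing) and matches the two leading terms of the Laurent expansion at $z=\pm i\omega_p$. Only after this subtraction is the integrand $\mathcal O(|k|^4/|z|^4)$ along the oblique parts of the contour.

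A related technical point: you propose to shift the Bromwich contour in $\tilde\rho=\tilde H/\mathcal D$ directly. For merely Sobolev data, $\tilde H(z,k)=\int_0^\infty e^{-zt}\widehat{h_{in}}(k,kt)\,dt$ is only defined for $\Re z>0$, so that shift is illegal. The paper avoids this by shifting only the \emph{resolvent} $\mathcal L/(1-\mathcal L)$ (entire in $z$ because $f^0$ is Gaussian) and keeping the time-convolution with the source on $[0,t]$; this is also what makes the integration-by-parts-in-$\tau$ manoeuvre in (i) available.
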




\begin{remark}
  The $E^{(2)}_{KG}$ electric field is essentially a Klein-Gordon-type
  evolution subjected to a Landau damping external forcing. See
  Section~\ref{sec:Dens} for more details. 
\end{remark}
\begin{remark}
For frequencies bounded away from zero, i.e. for any $\delta > 0$, $P_{\geq \delta }E$, Landau damps at a polynomial rate $\brak{t}^{-\sigma}$ provided the initial data is $W^{\sigma,1}$, hence the extremely slow Landau damping of $E_{KG}$ manifests only at $k=0$. 
As expected, for frequencies bounded from zero, the electric field (both $E_{LD}$ and $E_{KG}$) damps exponentially fast if the initial data is analytic (and analogously $e^{-\brak{kt}^s}$ for Gevrey initial data).   
\end{remark}
\begin{remark}
  As the electric field is not shielded, it is too restrictive to
  assume that $E_{in} \in L^1$, regardless of how well localized the
  initial $h_{in}$ is. If $x h_{in} \in L^1$ and
  $\int_{\R^3_X \times \R^3_v} h_{in} dx d v = 0$, then
  $E_{in}(x) \approx \abs{x}^{-3}$ as $x \to \infty$ (hence,
  $E_{in} \in L^p$ for $p > 1$ but not $p=1$) but one cannot obtain
  faster decay at infinity unless one has higher zero-moment
  conditions, such as
  $\int_{\R^3_x \times \R^3_v} x^\alpha h_{in} d x d v = 0$ and the latter are not propagated by the
    semi-group. 
\end{remark}
\begin{remark}
  In the case of the kinetic free transport, the electric field decays
  only as $\norm{E_{FT}(t)}_{L^\infty_x} \lesssim \brak{t}^{-2}$,
  which is a full power of $t$ slower than \eqref{ineq:EFTbd}.  For
  frequencies bounded away from zero, both the free
    transport electric field $E_{FT}$ and our $E_{LD}$ are
    Landau-damped exponentially fast for analytic data (and polynomial for Sobolev data); the
  difference in decay rates comes from the contribution of long waves.
\end{remark}
\begin{remark}
  There is a clear distinction between Landau damping and dispersive
  decay above: for the Landau damping contributions, each derivative
  buys one power of time (a structure that can be guessed from free
  transport), whereas for the Klein-Gordon-like contributions, there
  is no such behavior.
\end{remark}
\begin{remark}
  We have not endeavored to get the sharpest dependence on the
  initial data that could be possible. This could be important for
  nonlinear extensions.
\end{remark}

We deduce a similar decomposition at the level of the distribution
function:
\begin{theorem} \label{thm:Fdecom} The solution to~\eqref{def:VPElin}
  decomposes as $h = h_{KG} + h_{LD}$ with
  $h_{KG} = \tilde{E}_{KG}(t,x) \cdot \grad_v f^0(v)$ for some
  effective electric field $\tilde{E}_{KG}$, with the following
  estimates:
  \begin{itemize}
  \item for all $m \geq 0$, $r \in [1,\infty]$ and all $p \in [2,\infty)$, ($\tilde{E}_{KG}$ is essentially a phase-shifted version of $E_{KG}^{(1)}$ and satisfies the same estimates) 
  \begin{align}
    \label{ineq:hKGStrich}
    \norm{\brak{v}^mh_{KG}(t)}_{L^2_x L^r_v} & \lesssim_m \norm{h_{in}}_{W^{0,1}_{4}} \\
    \norm{\brak{v}^m h_{KG}(t)}_{L^p_x L^r_v} & \lesssim_{p,m} \brak{t}^{-3 \left(\frac{1}{2} - \frac{1}{p}\right)}\norm{\brak{x} h_{in}}_{W^{0,1}_{4}} \\
    \norm{\brak{v}^m \grad_x h_{KG}(t)}_{L^\infty_x L^r_v} & \lesssim_m \brak{t}^{-3/2}\norm{h_{in}}_{W^{0,1}_{4}}; 
  \end{align}
\item $h_{LD}$ scatters to free transport in all $L^{p}_{x,v}$,
  $p > 6$ (provided one has enough initial regularity). In particular,
  if $\brak{\grad}^\sigma \brak{v}^{m}h_{in}$ is integrable for
  $\sigma,m \in N$ large enough, then for all $p > 6$, there exists an
  $h_\infty \in L^{p}_{x,v}$ such that
  \begin{align*}
    \norm{\brak{v}^m \Big[ h_{LD}(t,x -vt,v) - h_{\infty}(x,v)\Big]}_{L^p_{x,v}} \xrightarrow{t \to +\infty} 0. 
  \end{align*}
\end{itemize}
\end{theorem}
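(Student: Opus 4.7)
The proof is based on the Duhamel formula for the linearized equation,
\begin{align*}
  h(t,x,v) = h_{in}(x-vt,v) - \int_0^t E(s,x-v(t-s))\cdot\nabla_v f^0(v)\,ds,
\end{align*}
combined with the splitting $E = E_{KG}^{(1)}+E_{KG}^{(2)}+E_{LD}$ from Theorem~\ref{thm:ElecDec}. Taking the Fourier transform in $x$, and using the schematic representation $\widehat{E_{KG}^{(1)}}(s,k)=\sum_\pm A^\pm(k)e^{(\pm i\Omega(k)-\lambda(k))s}$ with smooth coefficients $A^\pm(k)$ supported on $|k|<\nu_0$, the $E_{KG}^{(1)}$ contribution to the Duhamel integral evaluates to
\begin{align*}
  \sum_\pm \frac{-A^\pm(k)\bigl[e^{(\pm i\Omega(k)-\lambda(k))t}-e^{-ik\cdot v t}\bigr]}{\,ik\cdot v \pm i\Omega(k)-\lambda(k)\,}\,\nabla_v f^0(v).
\end{align*}
Since $\Omega(k)\geq \omega_p/2$ on $|k|<\nu_0$ (after shrinking $\nu_0$ if necessary), the denominator never vanishes for any $v\in\R^3$.

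\textbf{Identifying $\tilde E_{KG}$ and the bounds on $h_{KG}$.} Using the algebraic identity
\begin{align*}
\frac{\pm i\Omega(k)-\lambda(k)}{ik\cdot v\pm i\Omega(k)-\lambda(k)} = 1 - \frac{ik\cdot v}{ik\cdot v\pm i\Omega(k)-\lambda(k)},
\end{align*}
the first exponential in the numerator factors as a $v$-independent Fourier multiplier (which we call $\hat{\tilde E}_{KG}(t,k)$) multiplied by $\nabla_v f^0(v)$, plus a $v$-dependent correction. Concretely,
\begin{align*}
  \hat{\tilde E}_{KG}(t,k) := \sum_\pm \frac{-A^\pm(k)\,e^{(\pm i\Omega(k)-\lambda(k))t}}{\pm i\Omega(k)-\lambda(k)},
\end{align*}
which is $E_{KG}^{(1)}$ post-composed with a smooth, bounded Fourier multiplier supported on $|k|<\nu_0$; hence $\tilde E_{KG}$ inherits all of the dispersive and $L^p$ bounds of $E_{KG}^{(1)}$ from Theorem~\ref{thm:ElecDec}. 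Setting $h_{KG}(t,x,v):=\tilde E_{KG}(t,x)\cdot\nabla_v f^0(v)$, every estimate in the first bullet follows by separation of variables and the Schwartz decay of $\langle v\rangle^m\nabla_v f^0$:
\begin{align*}
  \|\langle v\rangle^m h_{KG}(t)\|_{L^p_xL^r_v} = \|\tilde E_{KG}(t)\|_{L^p_x}\,\|\langle v\rangle^m\nabla_v f^0\|_{L^r_v},
\end{align*}
and likewise for the $\nabla_x$ variant.

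\textbf{Scattering of $h_{LD}:=h-h_{KG}$.} Conjugate with free transport via $g_{LD}(t,x,v):=h_{LD}(t,x+vt,v)$. Then $g_{LD}(t)-h_{in}$ is the sum of: (i) $\int_0^t$-integrals of spatial $(-vs)$-translates of $E_{LD}(s)\cdot\nabla_v f^0(v)$ and $E_{KG}^{(2)}(s)\cdot\nabla_v f^0(v)$; (ii) the $e^{-ik\cdot v t}$ (free-transport) branch from the Klein-Gordon factorization, which becomes $t$-independent after conjugation by $e^{ik\cdot vt}$ and thus directly contributes to $h_\infty$; and (iii) the $v$-dependent Fourier-multiplier correction of $\tilde E_{KG}(s)$ arising above. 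Since translation in $x$ is an isometry on $L^p_x$ and $\langle v\rangle^m\nabla_v f^0$ is Schwartz, $L^p_{x,v}$-convergence of (i) and (iii) reduces to $\int_0^\infty\|E_\ast(s)\|_{L^p_x}\,ds<\infty$ for each piece $E_\ast$. Interpolating \eqref{ineq:EFTbdL2} and \eqref{ineq:EFTbd} gives $\|E_{LD}(s)\|_{L^p_x}\lesssim\langle s\rangle^{-(4-3/p)}$, integrable for all $p\geq 2$. The Klein-Gordon-type pieces $E_{KG}^{(2)}$ and the correction from (iii) decay as $\langle s\rangle^{-3(1/2-1/p)}$ by Theorem~\ref{thm:ElecDec}, which is integrable precisely when $p>6$. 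This produces an absolutely convergent Bochner integral in $L^p_{x,v}$ and gives the limit $h_\infty$.

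\textbf{Main obstacle.} The principal technical point is the analysis of the $v$-dependent correction in (iii): one must show that the Fourier multiplier $ik\cdot v/(ik\cdot v\pm i\Omega(k)-\lambda(k))$ acting on $\tilde E_{KG}(s)\cdot\nabla_v f^0(v)$ inherits Klein-Gordon dispersive decay $\langle s\rangle^{-3(1/2-1/p)}$ in $L^p_x$ uniformly in $v$ on the support of $\langle v\rangle^m\nabla_v f^0$. This uses the uniform lower bound $\Omega(k)\geq\omega_p/2$ on $|k|<\nu_0$ together with the Schwartz structure of $f^0$ to absorb polynomial weights in $v$. Once this uniform-in-$v$ dispersive bound is in hand, every remaining estimate reduces to elementary manipulations of the bounds of Theorem~\ref{thm:ElecDec}; the threshold $p>6$ is seen to be sharp in this approach as the precise integrability threshold of the Klein-Gordon dispersive rate.
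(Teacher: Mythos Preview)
Your overall architecture matches the paper's: the same Duhamel formula, the same splitting of $E$, and---after your algebraic identity---exactly the same $\tilde E_{KG}$ (the paper also obtains $\hat{\tilde E}_{KG}(t,k)=\sum_\pm \frac{-A^\pm(k)}{p_\pm(k)}e^{p_\pm(k)t}$ via a single integration by parts in $\tau$). The $L^p_xL^r_v$ bounds on $h_{KG}$ and the $E_{LD}$, $E_{KG}^{(2)}$ contributions to scattering are handled just as in the paper.

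The genuine gap is in your treatment of (iii). The multiplier $m_v(k)=\dfrac{ik\cdot v}{ik\cdot v\pm i\Omega(k)-\lambda(k)}$ is \emph{not} controlled by polynomial weights in $v$: whenever $k\cdot v=\mp\Omega(k)$ (which happens for every $|v|\gtrsim \omega_p/\nu_0$ at some $|k|<\nu_0$), the denominator collapses to $\lambda(k)=\cO(|k|^\infty)$ and $|m_v(k)|\sim \Omega(k)/\lambda(k)$ blows up faster than any polynomial in $|v|$. Your claimed ``uniform lower bound $\Omega(k)\ge\omega_p/2$'' only controls $|p_\pm(k)|$, not $|ik\cdot v+p_\pm(k)|$; the Schwartz decay of $f^0$ does not obviously beat this (and even if it did pointwise, the $k$-derivatives of $m_v$ needed for the stationary-phase dispersive estimate would have the same singularity).

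The fix is to avoid the closed-form multiplier altogether. In the moving frame, differentiate your correction in $t$: one finds $\partial_t(\text{iii})=\sum_\pm \frac{A^\pm(k)}{p_\pm(k)}\,ik\cdot v\,e^{(p_\pm(k)+ik\cdot v)t}\nabla_v f^0(v)$, which on the physical side is exactly $\nabla_x\tilde E_{KG}(t,x+vt)\cdot\big(v\,\nabla_v f^0(v)\big)$. Here the only multiplier is $ik/p_\pm(k)$, which is smooth and bounded on $|k|<\nu_0$, so $\|\nabla_x\tilde E_{KG}(t)\|_{L^p_x}\lesssim\brak{t}^{-3(1/2-1/p)}$ directly from the dispersive lemma, and the single factor of $v$ is genuinely polynomial and absorbed by $\nabla_v f^0$. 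This gives $\partial_t(\text{iii})\in L^1_tL^p_{x,v}$ for $p>6$ and hence Cauchy convergence, without ever needing to bound $m_v$. This is precisely the paper's argument (its term $g_3$), obtained there by a single integration by parts in $\tau$ rather than by exact evaluation of the time integral.
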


\begin{remark}
  Using the Strichartz estimates~\cite{MR1383431} for the transport
  equation one obtains
  \begin{align*}
    \norm{h_{LD}}_{L^q_t L^p_x L^r_v} \lesssim
    \norm{h_{in}}_{W^{\sigma,1}_{m}} 
  \end{align*}
  for all $(p,q,r,a)$ satisfying
  \begin{align*}
    \frac{1}{r} - \frac{1}{n} < \frac{1}{p} \leq \frac{1}{r} \leq 1,
    \quad  1 \leq \frac{1}{p} + \frac{1}{r}, \quad 
    \frac{2}{q} = n\left(\frac{1}{r} - \frac{1}{p}\right), \quad 
    \frac{1}{a} = \frac{1}{2}\left(\frac{1}{r} + \frac{1}{p} \right), \quad
    a>6.
  \end{align*}
\end{remark}

Theorem \ref{thm:ElecDec} shows that the leading-order term in the
asymptotics of the distribution $h$ factorises between a function of
$(t,x)$ and a fixed function of $v$, which is reminiscent of a
hydrodynamical limit \emph{even though the equations are
  collisionless}. One can actually push this intuition further: a
long-wave re-scaling of the electric field converges weakly (in
negative Sobolev spaces) to the electric field solving the linearized
Euler-Poisson system~\eqref{eq:lin-Euler}. See Subsection~\ref{subsec:hydro} for a proof.
\begin{theorem}
  \label{thm:EPcorr}
  Consider an initial data
  $h_{in}(x,v) = \epsilon^3 \mathcal H_0(\epsilon x,v)$ such that
  $\mathcal H_0$ has zero average and
  $\mathcal H_0 \in W^{5+0,1}_{5}$, and denote
  \begin{align*}
    E_0 = \frac{q^2 n_0}{\epsilon_0 m_e} \grad_x (-\Delta_x)^{-1} \int_{\R^3}\mathcal{H}_0(\cdot, v)
    \, dv.
  \end{align*}
  
  Let $\cE$ be the unique solution in the natural energy space
    $\cE \in L^\infty_t H^1_x$ and
    $\partial_t \cE \in L^\infty_t L^2_x$ to the following Klein-Gordon equation, 
  \begin{align}
    \label{eq:ep-lin}
    \left\{
    \begin{array}{l}
      \displaystyle
      \partial_{t}^2 \cE + \left(\omega_p^2 - \frac{3T}{m_e}
      \Delta\right) \cE =0\\[3mm]
      \displaystyle
      \cE(0,x) = E_0(x) \\[3mm]
      \displaystyle
      \partial_t \cE(0,x) = -n_0 \grad_x \cdot \left(\int v h_{in} dv \right).  
    \end{array}
    \right.
  \end{align}
  Then, for all $s \in (5/2,7/2)$, $0 < \epsilon \ll 1$, for all
  $0 < t < \epsilon^{-N}$, there holds
  \begin{align}
    \label{ineq:Hmsdec}
    \norm{E\left(t,\frac{\cdot}{\epsilon}\right)
    -\cE\left(t,\frac{\cdot}{\epsilon}\right)}_{H^{-s}}
    \lesssim \epsilon^2\left(\epsilon^2 + \epsilon^{s-\frac32-0} \brak{t}
    \right)
    \norm{\mathcal{H}_0}_{W^{2,1}_{5}}. 
  \end{align}
  Moreover, given such initial data with appropriate scaling, both the
  electric fields in the linearized Vlasov-Poisson and the linearized
  Euler-Poisson system are asymptotic to the Bohm-Gross dispersion
  relation at large scale: $E(t,x) = E_+ + E_-$ and
  $\cE(t,x) = \cE_+ + \cE_-$ with the following weak $L^2_x$ limit on
  $t \in [-T,T]$ for any $T$ fixed finite
  \begin{align*}
    & \frac{1}{\epsilon^2} e^{\mp i\omega_p \frac{t}{\epsilon^{2}}}
      E_{\pm}\left(\frac{t}{\epsilon^2},\frac{x}{\epsilon}\right)
      \xrightarrow[L^2_x]{\text{weak}} \frac{1}{2}e^{\pm \frac{3T}{m_e}
      i \Delta t} E_0 \\
    & \frac{1}{\epsilon^2} e^{\mp i\omega_p \frac{t}{\epsilon^2}}
      \cE_{\pm}\left(\frac{t}{\epsilon^2},\frac{x}{\epsilon}\right)
      \xrightarrow[L^2_x]{\text{weak}} \frac{1}{2}e^{\pm \frac{3T}{m_e}
      i \Delta t} \cE_0.
  \end{align*}
\end{theorem}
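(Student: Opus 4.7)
The plan is to invoke Theorem~\ref{thm:ElecDec} and compare its explicit Fourier formula for $E_{KG}^{(1)}$ against the Klein--Gordon solution $\cE$, exploiting the fact that the rescaling identity $\widehat{h_{in}}(k,\eta) = \widehat{\mathcal H_0}(k/\epsilon,\eta)$ concentrates the relevant spatial frequencies at $|k|\lesssim \epsilon$, which lies well inside the regime where the low-$k$ expansion of Theorem~\ref{thm:ElecDec} applies. The working identity throughout is
\[
\|F(\cdot/\epsilon)\|_{H^{-s}}^2 = \epsilon^{6} \int \langle \xi \rangle^{-2s}\,|\widehat F(\epsilon\xi)|^{2}\, d\xi,
\]
which converts pointwise Fourier estimates at frequency $\epsilon\xi$ into the target $H^{-s}$ norm; after substitution, each additional factor of $\epsilon\xi$ in $|\widehat F|$ produces a computable $\epsilon$-power gain weighted by $\langle\xi\rangle^{-2s}$.

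The main step is the comparison of $\widehat{E_{KG}^{(1)}}$ with $\widehat \cE$. One decomposes the difference into four contributions: (i) the dispersion-mismatch error from $\Omega(k)-\Omega_0(k)=\cO(|k|^{4}/\omega_p)$, which yields $|\cos(\Omega t)-\cos(\Omega_0 t)|\lesssim |k|^{4} t$ and analogously for the sine terms; (ii) the damping error $|1-e^{-\lambda(k)t}|\lesssim \lambda(k) t = \cO(|k|^{\infty} t)$, subdominant to (i); (iii) the time-independent $\cO(|k|^2)$ remainder from Theorem~\ref{thm:ElecDec}; and (iv) the initial-data mismatch, which after unpacking both $\widehat{E_{in}}(k)$ and $\widehat{E_0}(k)$ in terms of $\widehat{\mathcal H_0}$ reduces to a computable $\epsilon^2/n_0$ normalisation absorbed in the $\epsilon^2$ prefactor of the target estimate. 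Substituting (i) into the rescaling identity produces terms of the form $\epsilon^{6} t^{2}\int \langle\xi\rangle^{-2s}|\epsilon\xi|^{6}|\widehat{\mathcal H_0}(\xi,0)|^{2}\, d\xi$, where the factor $|\epsilon\xi|^{6}$ arises from $(|k|^{4} t)^{2}|\widehat{E_{in}}(k)|^{2}$ with $\widehat{E_{in}}\sim |k|^{-1}$; invoking $|\widehat{\mathcal H_0}(\xi,0)|\lesssim \langle\xi\rangle^{-\sigma}\|\mathcal H_0\|_{W^{\sigma,1}}$ for a suitable $\sigma\le 2$ and splitting the integral into low-$|\xi|$ and high-$|\xi|$ regions against the weight $\langle\xi\rangle^{-2s}$ yields the $\epsilon^{2}\cdot \epsilon^{s-3/2-0}\langle t\rangle$ time-dependent part of \eqref{ineq:Hmsdec}. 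The pieces $E_{KG}^{(2)}$ and $E_{LD}$ contribute only time-independent $\cO(\epsilon^{4})$ errors: the uniform bound $\|E_{KG}^{(2)}\|_{L^2}\lesssim \|h_{in}\|_{W^{0,1}_{5}}$ and the decay $\|E_{LD}\|_{L^2}\lesssim \langle t\rangle^{-5/2}\|h_{in}\|_{W^{3/2+0,1}_{0}}$ from Theorem~\ref{thm:ElecDec}, together with their concentration at low spatial frequencies (modulo exponentially small tails outside $|k|<\nu_0$ controlled by a few derivatives of $\mathcal H_0$), give $\cO(\epsilon^{s+3/2})\lesssim \cO(\epsilon^{4})$ uniformly in $t$ after the rescaling identity.

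For the weak-limit statements, write $E_{KG}^{(1)}=E_++E_-$ according to $\cos(\Omega t)=\tfrac{1}{2}(e^{i\Omega t}+e^{-i\Omega t})$, and analogously $\cE=\cE_++\cE_-$. Multiplication by $e^{\mp i\omega_p t/\epsilon^{2}}$ strips the fast carrier oscillation and leaves the envelope phase $e^{\pm i(\Omega(k)-\omega_p)t/\epsilon^{2}}$. Using $\Omega(\epsilon k')-\omega_p = \tfrac{3T}{2m_e\omega_p}\epsilon^{2}|k'|^{2}+\cO(\epsilon^{4})$ at rescaled frequency $k=\epsilon k'$, this phase converges pointwise to $\tfrac{3T}{2m_e\omega_p}|k'|^{2} t$, the symbol of a free Schr\"odinger-type propagator; the residual $\cO(\epsilon^{2} t)$ phase error is uniformly small on compact time intervals, and dominated convergence in $L^{2}_{k'}$ delivers the stated weak $L^{2}$ limits. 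An identical computation applied to $\cE$ yields the matching limit, confirming that both systems share the same Bohm--Gross envelope dynamics at long wavelength and long time. The main obstacle is the bookkeeping in step~(i): the $|k|^{-1}$ singularity of $\widehat{E_{in}}$ near the origin must be controlled using the zero-average vanishing $\widehat{\mathcal H_0}(0,0)=0$ together with the $|k|^{4} t$ dispersion gain to ensure the Fourier integral converges and produces the sharp rate throughout the range $s\in(5/2,7/2)$.
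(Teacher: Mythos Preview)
Your overall strategy coincides with the paper's: decompose $E=E_{KG}^{(1)}+E_{KG}^{(2)}+E_{LD}$, compare $E_{KG}^{(1)}$ with $\cE$ on the Fourier side via the expansions $\Omega(k)-\Omega_{KG}(k)=\cO(|k|^4)$ and $1-e^{-\lambda(k)t}=\cO(|k|^\infty t)$, then bound the remaining pieces. Your items (i)--(iii) and the weak-limit argument are essentially what the paper does (the paper absorbs the $|k|^{-1}$ from $\widehat{E_{in}}$ into the norm of $\mathcal H_0$ and writes the integrand as $|\epsilon k|^8 t^2$ rather than your $|\epsilon\xi|^6$, but this is harmless bookkeeping). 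Your item (iv) on an ``initial-data mismatch'' does not arise in the paper's computation and is not needed.

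There is, however, a genuine gap in your treatment of $E_{LD}$ and $E_{KG}^{(2)}$. Quoting only the physical-space bounds $\|E_{LD}\|_{L^2}\lesssim\brak{t}^{-5/2}\|h_{in}\|_{W^{3/2+0,1}_0}$ and $\|E_{KG}^{(2)}\|_{L^2}\lesssim\|h_{in}\|_{W^{0,1}_5}$ from Theorem~\ref{thm:ElecDec} cannot deliver the claimed $\cO(\epsilon^4)$: since $\|h_{in}\|_{W^{\sigma,1}_m}\lesssim\|\mathcal H_0\|_{W^{\sigma,1}_m}$ carries no $\epsilon$-gain, the best you get from rescaling an $L^2$ bound is
\[
\|E_{LD}(t,\cdot/\epsilon)\|_{H^{-s}} \le \|E_{LD}(t,\cdot/\epsilon)\|_{L^2} = \epsilon^{3/2}\|E_{LD}(t)\|_{L^2} \lesssim \epsilon^{3/2}\brak{t}^{-5/2},
\]
which misses $\epsilon^4$ by a factor $\epsilon^{5/2}$. ``Concentration at low frequency'' does not close this: after the change of variables $k=\epsilon\xi$ one needs pointwise-in-$k$ control of $\widehat{E_{LD}}$ near $k=0$, not a global $L^2$ norm. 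The paper therefore does \emph{not} invoke Theorem~\ref{thm:ElecDec} here; it returns to the explicit Fourier formulas of Definition~\ref{def:dec-E} together with Lemmas~\ref{lem:PoleExDep} and~\ref{lem:FTRE}. For $E_{LD}^{(1)}$ the cancellation $1-A_0^+-A_0^-=\cO(|k|^2)$ gives an extra $|\epsilon k|$ in the integrand and produces $\epsilon^4/\brak{t}$ once $s>5/2$ (this is precisely where the lower bound on $s$ enters). For $E_{LD}^{(2)}$ one uses $|\cR_{RFT}(t,k)|\lesssim|k|^3 e^{-\lambda_0|k|t}$ from Lemma~\ref{lem:FTRE} inside the time-convolution to obtain $\epsilon^4$. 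For $E_{KG}^{(2)}$ the extra $|k|^{\ell+1}$ in the integrand yields $\epsilon^{s+3/2-0}$. You should replace your black-box $L^2$ step by these pointwise Fourier computations.
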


\subsection{Notation}
\label{subsec:notation}
Denote $\brak{x} = (1 + \abs{x}^2)^{1/2}$ and $\brak{\grad}$ the
Fourier multiplier defined by
\begin{align*}
\widehat{\brak{\grad}f}(\xi) = \brak{\xi} \widehat{f}(\xi); 
\end{align*}
we define other Fourier multipliers in the analogous way. We define
$P_{\leq N}$ to be the Littlewood-Paley low-pass filter in $x$, in
particular, define $\chi \in C_c^\infty(B(0,2))$ with $\chi(x) = 1$
for $\abs{x} \leq 1$, and define
\begin{align*}
\widehat{P_{\leq N} f}(k) = \chi\left(\frac{k}{N}\right) \widehat{f}(k). 
\end{align*}
Denote the weighted Sobolev spaces (note that the weight index refers to the variable $v$)
\begin{align*}
  & \norm{f}_{L^p_{x,v}} := \left( \int_{\R^3 \times \R^3} \abs{f(x,v)}^p
  dx dv \right)^{1/p},
  \quad \norm{\rho}_{L^p_{x}} := \left( \int_{\R^3} \abs{\rho(x)}^p dx
    \right)^{1/p} \\[3mm]
  & \norm{f}_{W^{\sigma,p}_{m}} := \left\|\brak{v}^m
  \brak{\grad_{x,v}}^\sigma f \right\|_{L^p}. 
\end{align*}
For inequalities involving norms $W^{\sigma+0,p}_{m}$ we use the
notation $+0$ if we mean that the estimate holds for $\sigma + \delta$
with $\delta\in(0,1)$ with a constant that depends on $\delta$.  For
$p \in [1,\infty]$ we denote $p' = \frac{p}{p-1}$ the H\"older
conjugate.  Let $f:[0,\infty) \rightarrow \Complex$ satisfy
$e^{-\mu t} f(t) \in L^1$ for some $\mu \in \Real$.  Then for all
complex numbers $\Re z \geq \mu$, we can define the Fourier-Laplace
transform via the (absolutely convergent) integral
\begin{align}
  \label{eq:fourier-laplace}
  \hat{f}(z) := \frac{1}{2\pi} \int_0^\infty e^{-zt} f(t) dt. 
\end{align}
This transform is inverted by integrating the $\tilde{f}$ along a
so-called `Bromwich contour' via the inverse Fourier-Laplace
transform: let $\gamma > \mu$ and define:
\begin{align}
  \check{f}(t) := \int_{\gamma - i\infty}^{\gamma + i \infty} e^{zt} f(z) dz. 
\end{align}

\section{Decomposition of the electric field} \label{sec:Dens}

\subsection{Volterra equation}

The most important property of the linearized Vlasov equations is that one can reduce the problem to a
Volterra equation for the density separately for each spatial
frequency.  We now recall how this is done. Writing a Duhamel
representation along free transport gives
\begin{align*}
  h(t,x,v) = h_{in}(x-vt,v) + \int_0 ^t (\nabla_x W *_x \rho)(t,x-v(t-s))
  \cdot \nabla_ v f^0 (v) ds.
\end{align*}
Taking the Fourier transform in $x$ and integrating in $v$ gets (with
$w_0:= \omega_p^2 n_0^ {-1}$)
\begin{align}
  \label{eq:Volt}
  \hat{\rho}(t,k) 
  = \widehat{h_{in}}(k,kt) - w_0
  \int_0^t (t-\tau) \widehat{f^0}(k(t-\tau)) \hat \rho(\tau,k) d\tau. 
\end{align}
Taking the Fourier-Laplace transform~\eqref{eq:fourier-laplace} in
time for $\Re z$ sufficiently large gives
\begin{align}
  \label{eq:LapVolt}
  \tilde{\rho}(z,k) = H(z,k) + \mathcal L (z,k)\tilde{\rho}(z,k), 
\end{align}
where $H(z,k)$ is the Fourier-Laplace transform of
$t \mapsto \widehat{h_{in}}(k,kt)$ and the \emph{dispersion function}
is 
\begin{align}
  \label{eq:gpenintermed} 
  \mathcal{L}(z,k)
  & :=  - w_0 \int_0^{+\infty} t
    \widehat{f^0}(kt) e^{-z t} dt
    = - \frac{w_0}{|k|^{2}} \int_0^{+\infty}
    e^{-\frac{z}{\abs{k}} s} s \, \widehat{f^0}\left(\hat k s\right) ds, 
\end{align}
with the standard notation $\hat k = k/|k|$. This change of variable
allows for the function $s \mapsto s \widehat{f^0}(\hat ks)$ to have
regularity bounds independently of the size of $k$.  Our
assumption~\eqref{eq:f0} implies that $z \mapsto \cL(z,k)$ is an
entire function.  Moreover, we see that away from $z=0$, $\cL(z,k)$ is
also a smooth function of $k$ (even at $k = 0$ as we shall see in the
expansions below).

\subsection{Asymptotic expansions and lower bounds on the dispersion
  function}
\label{sec:estim-disp-funct}

Solving~\eqref{eq:LapVolt} for $\rho$ works, formally at least, except
where $\mathcal{L}(z,k)$ gets close to one. In the case of
not-so-small spatial frequencies $k$, the treatment is similar to that
of the periodic domain $x \in \mathbb T^3$, and so we merely sketch
the argument here; see~\cite{MouhotVillani11,BMM13} for more details.
\begin{lemma}[Resolvent estimates for non-small frequencies] \label{lem:HiFL} There exists a $\lambda >0$ such that for 
  any $\nu_0 > 0$, $\exists \, \kappa > 0$ (depending on $\nu_0$) such that
  \begin{align}
    \label{ineq:kaplwbd}
    \forall \, |k| > \nu_0, \quad
    \inf_{\Re z > -\lambda \abs{k}} \abs{1 - \cL(z,k) } > \kappa. 
  \end{align}
  Furthermore, the following estimate holds uniformly on the critical
  vertical line of this region
\begin{align}
  \label{ineq:HikHiOmega}
  \forall \, |k| > \nu_0, \ \omega \in \R, \quad
  \abs{\mathcal{L}(\lambda \abs{k} + i\omega,k)} \lesssim_\lambda
  \frac{1}{1+\abs{k}^2+ \omega^2}. 
\end{align}
\end{lemma}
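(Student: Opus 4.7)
The plan is to establish the decay estimate~\eqref{ineq:HikHiOmega} first, and then to derive the non-vanishing~\eqref{ineq:kaplwbd} using both this decay and the classical Penrose stability of Maxwellians, taking advantage of the natural scaling $\zeta = z/|k|$ already visible in~\eqref{eq:gpenintermed}.

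\textbf{Step 1 (decay of $\cL$).} Set $g(s) := s\,\hat{f^0}(\hat k s)$; for the Maxwellian~\eqref{eq:f0}, $\hat{f^0}(\hat k s) = n_0 e^{-Ts^2/(2m_e)}$ is Schwartz in $s$ uniformly in $\hat k$, so $g$ and all its derivatives have Gaussian decay independent of $\hat k$. Since $g(0) = 0$ and $g'(0) = n_0$, two successive integrations by parts in $s$ against $e^{-(z/|k|) s}$ on the vertical line $\Re z = \pm \lambda |k|$ produce
\[
\left| \int_0^{\infty} e^{-(z/|k|)s} g(s)\, ds \right| \;\lesssim_\lambda\; \min\!\left(1,\; \frac{|k|^2}{|\Im z|^2}\right),
\]
with constants depending only on $\lambda$ and on fixed Gaussian seminorms. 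Multiplying by $w_0/|k|^2$ and using $|k|>\nu_0$ gives~\eqref{ineq:HikHiOmega}, and the same argument in fact yields the uniform bound $|\cL(z,k)| \lesssim 1/(1+|k|^2+|\Im z|^2)$ throughout the strip $-\lambda|k| \le \Re z \le \lambda|k|$.

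\textbf{Step 2 (Penrose stability on the imaginary axis).} The one-dimensional marginal of the Maxwellian along any direction $\hat k$ is a centred Gaussian, hence strictly unimodal, and the classical Penrose criterion yields $1 - \cL(i\omega, k) \neq 0$ for every $k\neq 0$ and $\omega\in\R$. Step 1 forces $\cL(i\omega, k)\to 0$ whenever $|k|+|\omega|\to \infty$, so $|1-\cL|\to 1$ there; combined with continuity on $\{|k|\ge\nu_0\}\times\R_\omega$, this produces a uniform bound
\[
\inf_{|k|\ge\nu_0,\ \omega\in\R}\,\abs{1-\cL(i\omega,k)}\ge 2\kappa_0 > 0.
\]

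\textbf{Step 3 (extending to a strip of width $\lambda|k|$).} The scaling $\zeta = z/|k|$ gives $\cL(z,k) = w_0 |k|^{-2}\,\tilde\cL(\zeta,\hat k)$ with $\tilde\cL$ entire in $\zeta$ and continuous in $\hat k\in S^2$. For $|k|\ge M$ with $M$ large, the $|k|^{-2}$ prefactor forces $|\cL|\le 1/2$ throughout any fixed strip $\Re\zeta \ge -\lambda$ (with $\lambda$ independent of $k$), so $|1-\cL|\ge 1/2$ automatically. For $|k|\in[\nu_0,M]$, compactness in $(\hat k,|k|)$, continuity of $\tilde\cL$ in $\zeta$, and the tail bound from Step 1 (which truncates the non-compact $\omega$-direction) let one choose $\lambda>0$ small enough that $|1-\cL(z,k)|\ge\kappa_0$ throughout $-\lambda|k|\le\Re z\le 0$. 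Taking the minimum over the two regimes, and combining with $|1-\cL|\ge 2\kappa_0$ on $\Re z\ge 0$ from Step 2, produces the desired uniform $\kappa>0$.

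The main obstacle is producing uniformity simultaneously in $k$ and in $\omega=\Im z$, neither of which is compact a priori. Step 1 is the device that compactifies the $\omega$-direction; the scaling $\zeta=z/|k|$ in Step 3 is what makes the non-vanishing strip widen at exactly the rate $\lambda|k|$ demanded by the statement; and the Gaussian form of $f^0$ enters essentially in both steps, first to make $\cL$ extendable as an entire function (so analytic continuation through the imaginary axis is automatic and quantitative), and second to guarantee classical Penrose stability via the unimodal marginal.
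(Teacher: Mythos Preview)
Your Step~1 is essentially the paper's own argument for~\eqref{ineq:HikHiOmega}: two integrations by parts in $s$ against the Gaussian $s\widehat{f^0}(\hat k s)$, picking up the boundary term $g'(0)=n_0$ and using analyticity of $f^0$ to control the remainder, yielding the $(\lambda^2|k|^2+\omega^2)^{-1}$ bound. For~\eqref{ineq:kaplwbd} the paper simply cites \cite{MouhotVillani11,BMM13} and gives no argument; your Steps~2--3 supply one, via Penrose stability of the Maxwellian marginal on the imaginary axis, the decay from Step~1 to compactify the $\omega$-direction, and the rescaling $\zeta=z/|k|$ to handle the $k$-direction. This is the standard route and is correct in substance.

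One small point worth flagging: in your Step~3 the choice of $\lambda$ on the compact range $[\nu_0,M]$ depends on $\nu_0$, whereas the lemma as stated asserts a universal $\lambda$ chosen before $\nu_0$. In fact the quantifier order in the statement is slightly delicate (the branches $p_\pm(k)$ constructed later satisfy $\Re p_\pm(k)=O(|k|^\infty)$, so for very small $|k|$ they lie inside any strip $\Re z>-\lambda|k|$), and the paper only ever invokes the lemma with a fixed $\nu_0$ coming from the low-frequency analysis. Your argument delivers exactly what is needed for the applications; just be aware that the literal quantifier order requires a bit more, or a slightly different reading of the statement.
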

\begin{proof}
  The estimate~\eqref{ineq:kaplwbd} is proved in
  e.g. \cite{MouhotVillani11,BMM13}. To see~\eqref{ineq:HikHiOmega},
  we use integration by parts and the analyticity of $f^0$ to get
  \begin{align*}
    \left| \cL(\lambda \abs{k} + i\omega,k) \right|
    & = \frac{w_0}{|k|^2} \left| \int_0^\infty e^{\lambda s -
      i\frac{\omega}{\abs{k}}s} s\widehat{f^0} \left(\hat k s\right)
      ds \right| \\
    & = \frac{w_0}{|k|^2} \left| \int_0^\infty \frac{1}{\left(\lambda - i
      \frac{\omega}{\abs{k}}\right)^2} \partial_s ^2 \left(e^{\lambda s
      - i\frac{\omega}{\abs{k}}s}\right) s\widehat{f^0}\left(\hat k
      s\right) ds \right| \\
    & \leq \frac{w_0}{\left(\lambda^2|k|^2 +\omega^2 \right)} \left[
      \left| n_0 \right| + \int_0^\infty \left( 2 \left| \partial_s
      \widehat{f^0}\left(\hat{k}s\right)\right|  + s  \left| \partial_s ^2 \widehat{f^0}\left(\hat k
      s\right)\right| \right) ds \right] \\
    & \lesssim_\lambda \frac{1}{1+\abs{k}^2+ \omega^2}, 
\end{align*}
which completes the proof. 
\end{proof} 

Next we turn to the low frequency estimates, which are more
challenging, and contain the long-wave dispersive structure. For
$\delta,\delta'>0$, define the following region in the complex plane:
\begin{align}
  \label{eq:deflambda}
  \Lambda_{\delta,\delta'} := \set{z = \lambda + i\omega \in \Complex
  \ : \ \lambda > -\min\Big[(1-\delta)\abs{\omega},\delta' \abs{k}\Big]}.  
\end{align}
\begin{center}
  \includegraphics[width=0.7\textwidth]{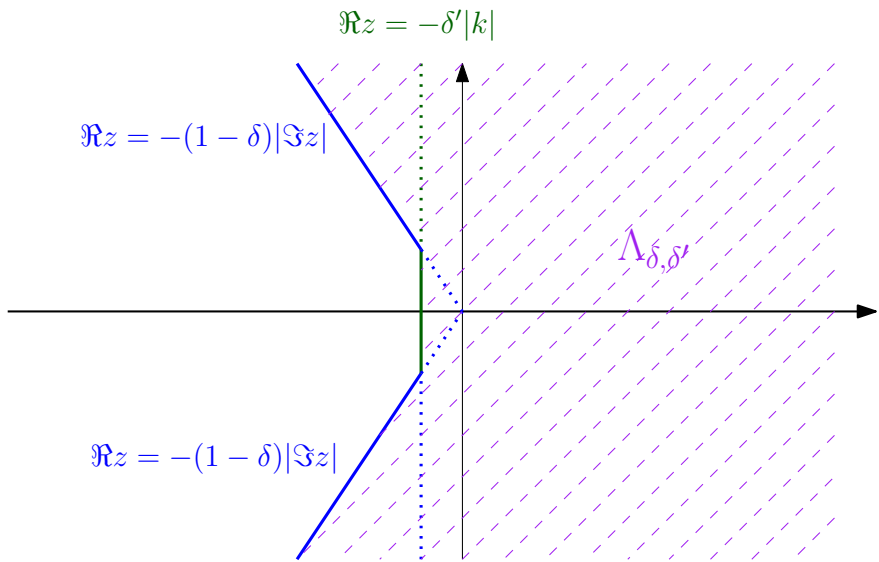}
  \captionof{figure}{The region $\Lambda_{\delta,\delta'}$} \label{pic:lambda} \medskip
\end{center}

We will next show that $\mathcal{L}(z,k)$ stays uniformly away from
one in the region
$\Lambda_{\delta,\delta'} \setminus \set{\abs{z \pm i \omega_p} <
  \epsilon}$ where $\omega_p$ is the cold plasma
frequency. 
The proof relies on two representations: (1) an expansion obtained by
the stationary phase method (i.e. successive integrations by parts in
time) meaningful for large values of $\frac{z}{|k|}$, (2) an
approximation argument using the explicit formula obtained from the
Plemelj formula at the imaginary line $\Re z =0$, that provides
estimates near this line. The first representation is given by the
following lemma.
\begin{lemma}[Asymptotic expansion of $\cL$] \label{lem:Lzexp}
Given $\delta'>0$ sufficiently small depending only on $f^0$,  
\begin{align}
  \label{eq:Lexp}
  \forall \, z \in \Lambda_{\delta,\delta'}, \quad
  \cL(z,k) = -\frac{\omega_p^2}{z^2}\left[1 + \frac{ 3 T\abs{k}^2}{m_ez^2} + \cO\left(\frac{\abs{k}^4}{\abs{z}^4}\right) \right], \quad
  \textup{as} \quad \frac{\abs{z}}{\abs{k}} \to \infty. 
  \end{align}
  Note that this expansion only contains information for frequencies $|k| \ll |z|$.
\end{lemma}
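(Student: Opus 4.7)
The idea is a stationary-phase-type expansion in the small parameter $|k|/|z|$, obtained by Taylor expanding $\widehat{f^0}(kt) = n_0\, e^{-T|k|^2 t^2/(2m_e)}$ around $t=0$ and integrating term-by-term against $t\, e^{-zt}$. Equivalently, I would repeatedly integrate by parts in $t$ using $e^{-zt} = -z^{-1}\partial_t e^{-zt}$: each IBP produces a factor $1/z$ and passes a derivative onto $F(t) := t\widehat{f^0}(kt)$. The separation of scales $t\sim 1/|k|$ (decay of $\widehat{f^0}$) versus $t\sim 1/|z|$ is what drives the expansion.

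Concretely, writing
\[
\cL(z,k) = -\frac{\omega_p^2}{n_0}\int_0^\infty F(t)\, e^{-zt}\, dt
\]
and iterating the IBP identity six times, the boundary terms at $t=\infty$ vanish by the Gaussian decay of $F^{(m)}$. Since $f^0$ is even in $v$, $\widehat{f^0}$ is real and even in $\xi$, so $F^{(2m)}(0) = 0$ for every $m \geq 0$. Computing the remaining Taylor coefficients directly from the Maxwellian gives $F'(0) = \widehat{f^0}(0) = n_0$ and $F'''(0) = -3n_0 T|k|^2/m_e$, producing the two leading terms $n_0/z^2 - 3n_0 T|k|^2/(m_e z^4)$ of the expansion.

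For the remainder, I would rescale via $u = |k|t$: setting $a := T/(2m_e)$ and $G(u) := u\, e^{-au^2}$ (a fixed Schwartz function independent of $k$), one has $F(t) = (n_0/|k|)\, G(|k|t)$ and hence $F^{(N)}(t) = n_0 |k|^{N-1} G^{(N)}(|k|t)$. Substituting,
\[
\frac{1}{z^N}\int_0^\infty F^{(N)}(t)\, e^{-zt}\, dt = \frac{n_0|k|^{N-2}}{z^N}\, \widetilde G^{(N)}\!\!\left(\frac{z}{|k|}\right), \quad \widetilde G^{(N)}(\zeta) := \int_0^\infty G^{(N)}(u)\, e^{-\zeta u}\, du.
\]
Taking $N = 6$ gives the target scaling $|k|^4/|z|^6$, provided $\widetilde G^{(6)}$ is uniformly bounded on $\{\zeta = z/|k| : z \in \Lambda_{\delta,\delta'}\}$. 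Since $G^{(6)}(u) = P(u)\,e^{-au^2}$ for a fixed polynomial $P$, completing the square together with one further IBP in $u$ yields $|\widetilde G^{(6)}(\zeta)| \lesssim e^{|\Re \zeta|^2/(4a)} (1 + |\zeta|)^{-1}$.

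The main obstacle is uniformizing this bound over the unbounded non-convex region $\Lambda_{\delta,\delta'}$: here $\zeta = z/|k|$ satisfies $\Re \zeta \geq -\min[(1-\delta)|\Im \zeta|,\, \delta']$. When the linear-in-$|k|$ constraint is active, $|\Re \zeta| \leq \delta'$ is small, and choosing $\delta'$ sufficiently small depending only on $a$ (equivalently, only on $f^0$) bounds the Gaussian prefactor. When instead the $|\Im \zeta|$ constraint is active, $|\Re\zeta|/|\zeta| \leq (1-\delta)/\sqrt{(1-\delta)^2 + 1} < 1$ uniformly, so the Gaussian factor is again controlled and the polynomial decay $(1+|\zeta|)^{-1}$ handles the limit $|\zeta| \to \infty$. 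Either way, $|R_6(z,k)| \lesssim n_0|k|^4/|z|^6$, and factoring $-\omega_p^2/z^2$ out of the two leading terms and the remainder produces the claimed expansion with relative error $\cO(|k|^4/|z|^4)$ inside the bracket.
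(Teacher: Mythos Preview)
Your integration-by-parts extraction of the leading terms and the rescaling to $\zeta = z/|k|$ match the paper's proof, and your treatment of the strip $|\Re\zeta|\le\delta'$ is fine. The gap is in the second region, where $\Re\zeta < -\delta'$ and $|\Re\zeta| \le (1-\delta)|\Im\zeta|$.

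The bound you quote, $|\widetilde G^{(6)}(\zeta)| \lesssim e^{|\Re\zeta|^2/(4a)}(1+|\zeta|)^{-1}$, is what naive completion of the square on the half-line $[0,\infty)$ actually gives: taking absolute values inside the integral yields $\int_0^\infty |P(u)|\,e^{-au^2 + |\Re\zeta|\,u}\,du \lesssim \mathrm{poly}(|\Re\zeta|)\,e^{|\Re\zeta|^2/(4a)}$. But in this wedge $|\Re\zeta|$ is \emph{unbounded} (it can be as large as $(1-\delta)|\Im\zeta|$, and $|\Im\zeta|\to\infty$), so the exponential blows up and no polynomial factor $(1+|\zeta|)^{-1}$ can compensate. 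The observation $|\Re\zeta|/|\zeta| < 1$ does not control $e^{|\Re\zeta|^2/(4a)}$. If instead you intended the prefactor $e^{\Re(\zeta^2)/(4a)}$---which \emph{would} be bounded here since $|\Re\zeta|<(1-\delta)|\Im\zeta|$ forces $\Re(\zeta^2)=(\Re\zeta)^2-(\Im\zeta)^2<0$---note that completing the square on $[0,\infty)$ does not deliver it: after factoring out $e^{\zeta^2/(4a)}$, the remaining integral $\int_0^\infty P(u)\,e^{-a(u+\zeta/(2a))^2}\,du$ contributes a compensating $e^{(\Im\zeta)^2/(4a)}$.

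The paper's fix is to write $\int_0^\infty = \int_{-\infty}^\infty - \int_{-\infty}^0$. The full-line integral is an exact Gaussian Fourier transform and equals a constant times $(\zeta)^{N}e^{\zeta^2/(4a)}$ (for the Maxwellian, $\int_{\R} e^{-\zeta s} s\widehat{f^0}(\hat k s)\,ds \propto e^{m_e z^2/(2T|k|^2)}$ after undoing the derivatives), and in this wedge $\Re(\zeta^2)\lesssim_\delta -|\zeta|^2$, so this term vanishes to infinite order as $|\zeta|\to\infty$. The negative-half-line integral now has the favorable sign ($s<0$, $\Re\zeta<0$ makes $|e^{-\zeta s}|\le 1$) and can be handled by two further integrations by parts exactly as in the strip. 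Insert this splitting and your argument goes through.
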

\begin{proof}
  Since $s \mapsto s \hat{f^0}(\hat{k}s)$ is odd, observe that
  $\partial_s ^j (s\widehat{f^0}(\hat{k}s)) |_{s=0} = 0$ for all even
  $j$.  Therefore, integrating by parts repeatedly in $s$ in
 ~\eqref{eq:gpenintermed} gives
  \begin{align*}
    \mathcal{L}(z,k)
    & = -\frac{w_0}{z^2} \widehat{f^0}(0) -
      \frac{w_0}{z^2} \int_0^\infty  e^{-\frac{z}{\abs{k}} s}
      \partial_s ^2 \left(s\widehat{f^0}\left(\hat k s\right)\right) ds
    \\ & = -\frac{w_0}{z^2} \widehat{f^0}(0) -
         \frac{3 w_0 \abs{k}^2}{z^4} \left[ \hat{k}\otimes \hat{k}: \grad^2
         \widehat{f^0}(0)\right] - \frac{w_0 \abs{k}^2}{z^4} \int_0^\infty 
         e^{-\frac{z}{\abs{k}} s} \partial_s ^4 \left(s\widehat{f^0}
         \left(\hat k s\right)\right) ds \\
    & =: -\frac{w_0}{z^2} \widehat{f^0}(0) -
      \frac{3 w_0 \abs{k}^2}{z^4} \left[ \hat{k} \otimes \hat{k} : \grad^2
      \widehat{f^0}(0) \right] - \frac{w_0 \abs{k}^2}{z^4}\zeta(z,k). 
  \end{align*}
  Since $w_0 \widehat{f^0}(0) = w_0 n_0 = \omega_p ^2$ and
  $3 n_0 T := m_e \hat{k} \otimes \hat{k} : \grad^2 \widehat{f^0}(0)$
  gives the leading order terms in~\eqref{eq:Lexp}. Note that we have
  \begin{align}
    \zeta(z,k) := \int_0^\infty e^{-\frac{z}{\abs{k}}s} \partial_s ^4 \left(s \widehat{f^0}\left( \hat{k}s \right) \right) ds. 
  \end{align}
  It remains to show that for $z \in \Lambda_{\delta,\delta'}$, there
  holds
  \begin{align*}
    \abs{\zeta(z,k)} \lesssim \frac{\abs{k}^2}{\abs{z}^2}. 
  \end{align*}
  First consider the region $\Re z \ge -\delta' |k|$: integrating by
  parts two more times, we obtain
  \begin{align*}
    \forall \, \Re z \ge -\delta' |k|, \quad
    \abs{\zeta(z,k)}
    \lesssim \frac{\abs{k}^2}{\abs{z}^2}\left(1 + \int_0^\infty
    e^{\delta' s} \abs{\partial_s^6
    \left(s \widehat{f^0}\left( \hat{k}s \right) \right) } ds \right)
    \lesssim_{\delta'} \frac{\abs{k}^2}{\abs{z}^2}, 
  \end{align*}
  where the last line followed by the analyticity of $f^0$.

  Turn next to the region $\Re z < -\delta' |k|$ with
  $\Re z > - (1-\delta) |\Im z|$. Observe then
  $\arg z^2 \in [\frac{\pi}{2}+\beta,\frac{3\pi}{2}-\beta]$ for a
  small $\beta >0$ depending on $\delta$. Write
  \begin{align*}
    \zeta(z,k)
    = \int_{-\infty}^\infty e^{-\frac{z}{\abs{k}}s} \partial_s ^4
    \left(s \widehat{f^0}( \hat{k}s) \right) ds 
    -\int_{-\infty}^{0} e^{-\frac{z}{\abs{k}}s} \partial_s ^4
    \left(s \widehat{f^0}( \hat{k}s) \right) ds =:
    \zeta_1(z,k) + \zeta_2(z,k).
  \end{align*}
  On the one hand, $\zeta_2$ is bounded as in the region
  $\Re z \ge -\delta' |k|$ due to the now advantageous sign of the
  exponent. On the other hand, $\zeta_1$ is a true Fourier-Laplace
  transform, and due to~\eqref{eq:f0},
\begin{align*}
  \zeta_1(z,k) & = \int_{-\infty}^\infty e^{-\frac{z}{\abs{k}}s} \partial_s ^4
  \left(s \widehat{f^0}( \hat{k}s) \right) ds  =
  \frac{z^4}{|k|^4} \int_{-\infty}^\infty
e^{-\frac{z}{\abs{k}}s} \widehat{f^0}( \hat{k}s) ds  \\
& \qquad = \frac{n_0}{m_e^{3/2}} \frac{z^4}{|k|^4} \int_{-\infty}^\infty
  e^{-\frac{z}{\abs{k}}s -s^2 \frac{T}{2 m_e}} ds
  = \frac{n_0}{m_e^{3/2}} \frac{z^4}{|k|^4} e^{\frac{m_e z^2}{2T\abs{k}^2}}. 
\end{align*}
Due to $\arg z^2 \in [\frac{\pi}{2}+\beta,\frac{3\pi}{2}-\beta]$, it
holds $\Re z^2 \lesssim_{\delta} - |\Im z|^2\lesssim_\delta - |z|^2$ and
this term vanishes (to infinite order) in terms of
$\frac{|z|}{|k|} \to \infty$. This completes the proof.
\end{proof}

Lemma \ref{lem:Lzexp} suffices to estimate the resolvent in much of
the areas of interest.  The next lemma estimates the resolvent for low
frequencies $k$ in the half-plane $\Re z \ge -\delta'|k|$ (assuming
$\delta'$ to be small enough) and away from the cold plasma
frequencies $\pm i \omega_p$. Given $\epsilon >0$, define the
following region:
\begin{align}
  \label{eq:defh}
  \mathbb H_{\epsilon,\delta'} := \set{z = \lambda + i\omega \in \Complex
  : \lambda > -\delta' \abs{k} \ \text{ and } \ |z \pm i \omega_p| \ge
  \epsilon )}.  
\end{align}

\begin{center}
  \includegraphics[width=0.8\textwidth]{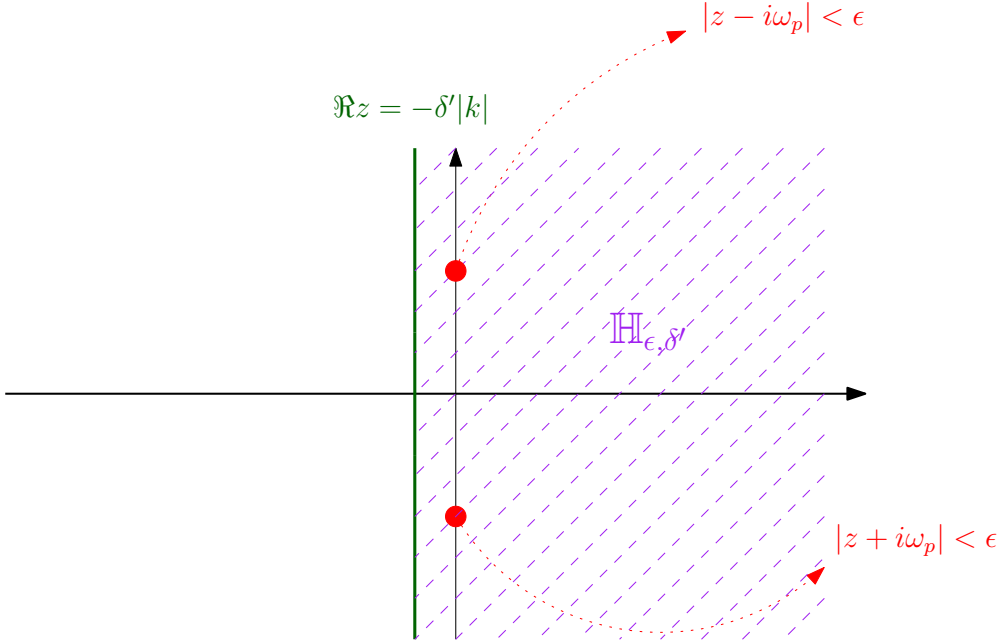}
  \captionof{figure}{The region $\mathbb H_{\epsilon,\delta'}$} \label{pic:hdelta}
  \medskip
\end{center}

\begin{lemma}[Low frequency resolvent estimates]
  \label{theo:resolvent}
  Given $\var,\delta' > 0$, there are
  $\nu_0, \kappa >0$ such that 
  \begin{align}
    \forall \, |k| < \nu_0, \ \forall
    \, z \in \mathbb H_{\epsilon,\delta'}, \quad 
    \abs{1-\mathcal{L}(z,k)} \geq \kappa. 
  \end{align}
\end{lemma}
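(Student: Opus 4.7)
The plan is to split the region $\mathbb{H}_{\epsilon,\delta'}$ according to the size of $|z|/|k|$, using a parameter $M \gg 1$ to be chosen large. This breaks the analysis into an ``outer'' subregion $\{|z|\geq M|k|\}$, where the asymptotic expansion from Lemma~\ref{lem:Lzexp} is effective, and an ``inner'' subregion $\{|z|\leq M|k|\}$, where a rescaling using the Maxwellian structure takes over.

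For the outer region, a direct verification shows that $\mathbb{H}_{\epsilon,\delta'} \cap \{|z|\geq M|k|\} \subset \Lambda_{\delta,\delta'}$ once $M$ is large enough (relative to $\delta,\delta'$), so Lemma~\ref{lem:Lzexp} applies and yields
\[
1-\mathcal{L}(z,k) = \frac{z^2+\omega_p^2}{z^2} + O\!\left(\frac{|k|^2}{|z|^4}\right) = \frac{z^2+\omega_p^2}{z^2} + O\!\left(\frac{1}{M^2|z|^2}\right).
\]
A three-case analysis on $|z|$ relative to $\omega_p$ bounds the leading term: if $|z|\leq\omega_p/2$ then $|z^2+\omega_p^2|\geq 3\omega_p^2/4$, so the ratio is $\geq 3$; if $\omega_p/2<|z|<2\omega_p$ then $|z^2+\omega_p^2|=|z-i\omega_p||z+i\omega_p|\geq \epsilon^2$ by the defining condition of $\mathbb{H}_{\epsilon,\delta'}$, giving ratio $\geq \epsilon^2/(4\omega_p^2)$; and if $|z|\geq 2\omega_p$ then $|z^2+\omega_p^2|\geq 3|z|^2/4$, giving ratio $\geq 3/4$. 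In each case, taking $M$ sufficiently large (depending on $\epsilon,\omega_p$) absorbs the error and gives $|1-\mathcal{L}(z,k)|\geq \kappa_1>0$ uniformly.

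For the inner region I exploit that $\widehat{f^0}(\hat k s) = n_0 e^{-Ts^2/(2m_e)}$ is independent of $\hat k$ for the Maxwellian; setting $\zeta:=z/|k|$,
\[
\mathcal{L}(z,k) = -\frac{\omega_p^2}{|k|^2}\,J(\zeta),\qquad J(\zeta) := \int_0^\infty s\, e^{-\zeta s-\frac{T}{2m_e}s^2}\,ds,
\]
where $J$ is entire. For $|k|<\nu_0$ small, $\zeta$ lies in the compact set $\overline K_M := \{\Re\zeta\geq-\delta',\ |\zeta|\leq M\}$, and the condition $|z\pm i\omega_p|\geq\epsilon$ becomes automatic. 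The central step is to show $J$ does not vanish on $\overline K_M$. Completing the square gives $J(\zeta) = (m_e/T)\bigl(1-F(w)\bigr)$ with $w:=\zeta\sqrt{m_e/(2T)}$ and $F(w):=\sqrt{\pi}\,w\,e^{w^2}\,\mathrm{erfc}(w)$. A Nyquist-type winding argument along the boundary of the right half-plane, using that $\mathrm{Im}\,F(iy)=\sqrt{\pi}\,y\,e^{-y^2}\neq 0$ for $y\neq 0$ and that $F(w)\to 1$ on the large right-half-plane semicircle, shows $F\neq 1$ in $\{\Re w\geq 0\}$; analyticity then extends the non-vanishing to a small strip $\{\Re w\geq -c\}$, which for $\delta'$ small (consistent with Lemma~\ref{lem:Lzexp}) contains $\overline K_M$. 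By compactness $|J(\zeta)|\geq c_1>0$ on $\overline K_M$, whence $|\mathcal{L}(z,k)|\geq \omega_p^2 c_1/|k|^2\geq 2$ once $\nu_0$ is small enough, and $|1-\mathcal{L}(z,k)|\geq|\mathcal{L}(z,k)|-1\geq 1$.

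Combining the two subregions and setting $\kappa:=\min(\kappa_1,1)$ completes the proof. The main obstacle is the non-vanishing of $J$ on $\overline K_M$: it reflects the Penrose stability of the Maxwellian equilibrium and relies crucially on the explicit Gaussian structure through the complementary error function representation, since for other equilibria one would have to also exclude additional complex Landau-mode zeros that could migrate into the strip $\Re\zeta\geq-\delta'$.
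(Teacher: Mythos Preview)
Your proof is correct and shares the paper's outer/inner splitting in $|z|/|k|$, but the inner-region argument is genuinely different. For $|z|\le R|k|$ the paper works on the imaginary axis via the Plemelj formula: it shows $|\Im\mathcal L(i\omega,k)|\gtrsim |k|^{-2}$ when $|\omega|/|k|$ is bounded away from zero (using monotonicity of the Maxwellian marginal $(f^0_k)'$) and $|\Re\mathcal L(i\omega,k)|\gtrsim |k|^{-2}$ near $\omega=0$, then extends off the axis by the bound $|\partial_z\mathcal L|\lesssim |k|^{-3}$ and finally covers $B(0,R|k|)\cap\{\Re z>0\}$ by the maximum principle. Your rescaling $\zeta=z/|k|$, the error-function identity $J(\zeta)=\frac{m_e}{T}(1-F(w))$, and the Nyquist winding argument dispose of all three steps at once, giving $J\neq 0$ on the whole compact $\overline K_M$ without separating $\Re z>0$ from the strip. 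The trade-off is exactly the one you flag: the paper's Plemelj route transfers verbatim to any radially decreasing (Penrose-stable) background, whereas your argument leans on the explicit Gaussian through $\mathrm{erfc}$.

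One point to tighten in the outer region: in the sub-case $|z|\le\omega_p/2$ your stated lower bound ``ratio $\ge 3$'' does not by itself absorb the error $O(1/(M^2|z|^2))$, because $|z|$ can be as small as $M|k|$ with $|k|\to 0$, making the error term unbounded. Use instead the sharper bound $|z^2+\omega_p^2|/|z|^2\ge (3\omega_p^2/4)/|z|^2$; then both the leading term and the error scale like $|z|^{-2}$, and the comparison $3\omega_p^2/4 > C/M^2$ for $M$ large yields $|1-\mathcal L|\ge (3\omega_p^2/8)/|z|^2\ge 3/2$.
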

\begin{proof}
  Let $R > 0$ be fixed large depending on $f^0$ but independent of $k$.
  \smallskip
  
  \noindent
  \textbf{Case 1: $\abs{z} > R \abs{k}$.} In this region the estimate
  follows from~\eqref{eq:Lexp} taking $R > 0$ sufficiently large.
  \smallskip

  \noindent
  \textbf{Case 2: $\abs{z} \leq R \abs{k}$.} In this region the
  asymptotic expansion~\eqref{eq:Lexp} is no longer useful and we use
  the \emph{Plemelj formula} instead.  Writing $z =\lambda + i\omega$,
  it is classical that for $\lambda = 0$, one has (see
  e.g. \cite{Penrose,MouhotVillani11} for explanations),
  \begin{align*}
    \cL(i\omega,k) = \frac{w_0}{|k|^2} \int_\R \frac{(f_k^0)'(r)}{r -
    \frac{\omega}{|k|}} dr + i \frac{w_0 \pi}{|k|^2} (f_k^0)'
    \left(\frac{\omega}{\abs{k}}\right), 
  \end{align*}
  where, for any $k \not =0$, the partial hyperplane average is
  defined as
  \begin{align}
    \forall \, r \in \R, \quad
    f_k^0(r) := \int_{\frac{k}{\abs{k}}r + k_{\perp}} f^0(v_*) dv_*.
  \end{align}
  Moreover, observe that for any $z \in \C$ such that $|z| \le R |k|$
  and $\Re z \in (-\delta'|k|,0]$,
  \begin{align}
    \label{ineq:dzL} 
    \abs{\partial_z \cL(z,k)} \lesssim \frac{w_0}{|k|^3} \int_0
    ^\infty s e^{\delta's} \abs{ \widehat{f^0}\left(\hat{k} s\right)} ds \lesssim
    \frac{1}{\abs{k}^3}
  \end{align}
  where we have used the analyticity of $f^0$ and taken $\delta'$
  small enough.
  \smallskip
  
  \noindent
  {\it Subcase 2.1: $|z| \le R |k|$ and $c|k|\le |\omega| \le R |k|$.}
  Given any $c>0$, we deduce from decay, smoothness, radial symmetry, and monotonicity of $f^0$ that
  \begin{align*}
    \inf_{c < \frac{\abs{\omega}}{\abs{k}} < R}  \frac{w_0 \pi}{|k|^2}
   \left| (f_k^0)'
    \left(\frac{\omega}{\abs{k}} \right)\right|  \gtrsim_{c,R}
    \frac{1}{|k|^2},   
  \end{align*}
  and therefore, using~\eqref{ineq:dzL}, $\abs{\Im \cL(z,k)} \gtrsim 1$
  holds uniformly for all $\Re z \in (-\delta' \abs{k},0)$.
  \smallskip

  \noindent
  {\it Subcase 2.2: $|z| \le R |k|$ and $|\omega| < c |k|$.} Observe
  that $\frac{(f_k^0)'(r)}{r}$ is integrable since $(f_k^0)'(0)=0$ and
  $f^0$ smooth, and if we denote
  \begin{align*}
    \int_\R \frac{(f_k^0)'(r)}{r} dr = \mathfrak c_0 \not =0,
  \end{align*}
  we deduce that if $c \le \frac{|\mathfrak c_0|}{2}$ is small enough
  then
  \begin{align*}
    \abs{\int_\R \frac{(f_k^0)'(r)}{r - \frac{\omega}{\abs{k}}} dr -
    \mathfrak c_0}
     = \abs{\frac{\omega}{\abs{k}}\int_\R \frac{(f_k^0)'(r)}{r
    \left(r-\frac{\omega}{\abs{k}}\right)} dr} 
    \lesssim \frac{\abs{\omega}}{\abs{k}} \le \frac{|\mathfrak c_0|}{2}  
  \end{align*}
  and thus
  \begin{align*}
    \abs{1-\cL(i\omega,k)} \gtrsim \frac{1}{\abs{k}^2}. 
  \end{align*}
  Using again~\eqref{ineq:dzL}, for $\delta'$ small enough we deduce
  that for $\Re z \in (-\delta'|k|,0]$, 
  \begin{align*}
    \abs{1-\cL(z,k)} \gtrsim 1. 
  \end{align*}

  These different cases above together prove that $1/|1-\cL|$ is
  bounded from above on the strip $\Re z \in (-\delta'|k|,0]$ and
  outside $B(0,R|k|) \cap \{ \Re z >0 \}$, and since there are no
  poles within the remaining region $B(0,R|k|) \cap \{ \Re z >0\}$,
  the function is holomorphic in this region and the upper bound is
  also valid there by the maximum principle, which completes the
  proof.
\end{proof} 

\subsection{Construction of the branches of poles}
\label{sec:constr-branch-poles}
From Lemma \ref{lem:Lzexp}, we have a pole at $\abs{k} = 0$ at the
cold plasma frequency: $\cL(\pm i\omega_p,0) = 1$. It follows from
Rouch\'e's theorem that if $\abs{k}$ if small enough, exactly two
poles persist in respective neighborhoods of $\pm i \omega_p$: Given
$\epsilon >0$, the two functions $F(z) := 1-\cL(z,0)$ and
$G(z):=\cL(z,k) - \cL(z,0)$ are holomorphic on the set
$\abs{z \mp i\omega_p} \l \epsilon$, and Lemma \ref{lem:Lzexp} implies
$\abs{ 1- \cL(z,0)} \gtrsim \epsilon$ and
$\abs{\cL(z,0) - \cL(z,k)} \lesssim \abs{k}^2$ on
$\abs{z \mp i\omega_p} =\epsilon$. Therefore, $F(z)=1-\cL(z,0)$ and
$F(z)-G(z)=1-\cL(z,k)$ have the same number of poles in
$\abs{z \mp i\omega_p} < \epsilon$ provided that $|k|$ is sufficiently
small relatively to $\epsilon$.

\begin{center}
  \includegraphics[width=0.8\textwidth]{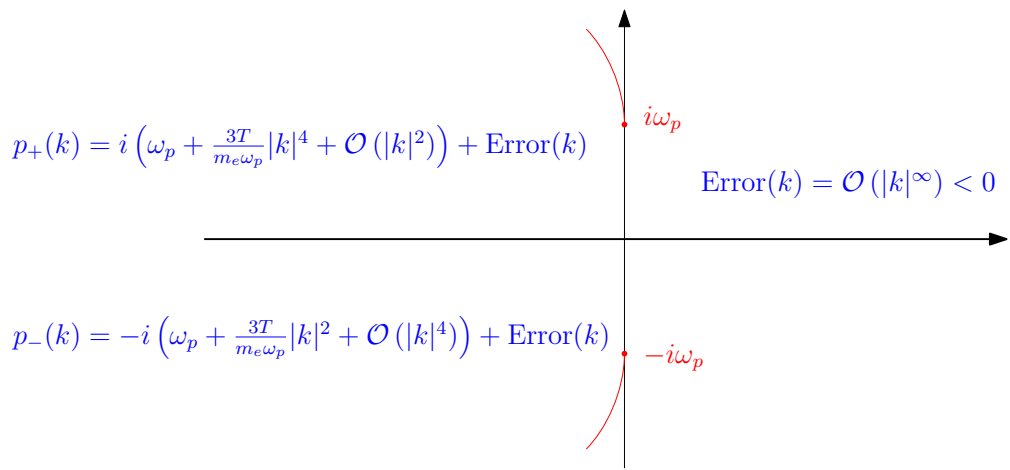}
  \captionof{figure}{The branches of poles $k \mapsto
    p_\pm(k)$}
  \label{pic:branches}
  \medskip
\end{center}
However knowing just the approximate location of the poles is not
enough to deduce dispersive estimates. We next use the implicit
function theorem to construct the branches of solutions $p_{\pm}(k)$.
\begin{lemma} \label{lem:IFT} There are $\epsilon,\nu_0 > 0$ such that
  for all $\abs{k} < \nu_0$, there are unique $p_{\pm}(k) \in \C$
  solution to $\cL(p_{\pm}(k),k) = 1$ in
  $\{|z \mp i\omega_p| < \epsilon\} $ and
  $k \mapsto p_{\pm}(k) =: - \lambda(k) \pm i \Omega(k) $ are smooth
  (but not analytic) and satisfy $\lambda(k) > 0$ and
  $p_{\pm}(k) \sim_{k \to 0} \pm i \omega_p$ with the following
  expansions as $k \to 0$:
   \begin{align}
     \label{eq:OmegaO1}
     & \Omega(k)^2 = \omega_p^2 +  \frac{3T}{m_e}\abs{k}^2 + \cO\left(\abs{k}^4\right)\\
     \label{eq:OmegaO2}
     & \nabla \Omega (k)
       =  i \frac{3T}{m_e \omega_p} k + \cO\left(\abs{k}^3\right) \\
     \label{eq:OmegaO3}
     & \nabla^2\Omega(k)
      = i \frac{3T}{m_e \omega_p} \mbox{{\em Id}}  + \cO\left(\abs{k}^2\right) \\[1mm]
     \label{eq:lamEsts}
     & \abs{\grad^j \lambda (k)}  \lesssim_{j,N} \abs{k}^N \quad
\textup{ for any $j,N \in \N$ }. 
  \end{align}
\end{lemma}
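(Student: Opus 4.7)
The plan is to construct the branches via the smooth implicit function theorem applied to $F(z,k) := 1-\cL(z,k)$ near $(\pm i\omega_p, 0)$, extract the expansions by bootstrapping Lemma \ref{lem:Lzexp}, and exploit the Gaussian structure of the Maxwellian to show that $\lambda(k)$ vanishes to infinite order. First I would verify the IFT hypotheses at $(i\omega_p,0)$: Lemma \ref{lem:Lzexp} at $k=0$ yields $\cL(z,0) = -\omega_p^2/z^2$, whence $F(i\omega_p,0)=0$ and $\partial_z F(i\omega_p,0) = -2\omega_p^2/(i\omega_p)^3 \neq 0$. The function $\cL(z,k)$ is entire in $z$ (already observed after~\eqref{eq:gpenintermed}); the Maxwellian structure $\widehat{f^0}(\xi) = n_0 e^{-T|\xi|^2/(2m_e)}$ lets one rewrite
\[
\cL(z,k) = -w_0 n_0 \int_0^\infty u\, e^{-Tu^2|k|^2/(2m_e)}\, e^{-zu}\, du,
\]
which depends on $k$ only through $|k|^2$ and is $C^\infty$ (but not analytic) in $|k|^2$ at $|k|^2=0$, with asymptotic expansion supplied by Lemma \ref{lem:Lzexp} and a remainder controlled by the explicit Gaussian $\zeta_1$ computed in its proof. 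The smooth IFT then produces a unique $C^\infty$ branch $p_+(k)$ with $p_+(0)=i\omega_p$ on some $\{|k|<\nu_0\}$; uniqueness inside $\{|z-i\omega_p|<\epsilon\}$ is already delivered by the Rouché argument recalled immediately before the statement, and $p_-(k) = \overline{p_+(k)}$ follows from $\cL(\bar z,k)=\overline{\cL(z,k)}$ since $f^0$ is real and even in $v$.

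For the expansion \eqref{eq:OmegaO1}, I would write $p_+(k) = i\omega_p + \delta(k)$ and substitute into $\cL(p_+(k),k) = 1$ using Lemma \ref{lem:Lzexp}. The leading-order balance
\[
\frac{2i}{\omega_p}\,\delta(k) + \frac{3T}{m_e\omega_p^2}\,|k|^2 + \cO\bigl(|\delta|^2 + |k|^4\bigr) = 0
\]
gives $\delta(k) = \frac{3iT}{2m_e\omega_p}\,|k|^2 + \cO(|k|^4)$ and therefore $\Omega(k)^2 = \omega_p^2 + \frac{3T}{m_e}|k|^2 + \cO(|k|^4)$. Since $\Omega$ is a smooth function of $|k|^2$, the derivative identities \eqref{eq:OmegaO2}--\eqref{eq:OmegaO3} follow by differentiating the identity $F(p_+(k),k) = 0$ in $k$ and inverting the non-vanishing factor $\partial_z F$; all odd-order derivatives of $\Omega$ at $k=0$ vanish automatically.

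The key step is \eqref{eq:lamEsts}. My plan is to observe that for $z = iw$ with $w\in\R$, every coefficient of the formal Taylor expansion of $\cL(iw,k)$ in $|k|^2$ is \emph{real}: after the integrations by parts performed in the proof of Lemma \ref{lem:Lzexp}, these coefficients are real linear combinations of $1/(iw)^{2n+2} = (-1)^{n+1}/w^{2n+2}$ weighted by even Gaussian moments of $f^0$, all real. Consequently the formal power-series solution of $\cL(z,k)=1$ near $i\omega_p$ stays on the imaginary axis at every polynomial order in $|k|^2$, contributing only to $\Omega(k)$. The deviation to the true root $p_+(k)$ is therefore driven entirely by the explicit transcendental remainder $\zeta_1(z,k) = \frac{n_0}{m_e^{3/2}}(z/|k|)^4 e^{m_e z^2/(2T|k|^2)}$ from the proof of Lemma \ref{lem:Lzexp}, which for $z$ in a small neighborhood of $i\omega_p$ satisfies $|\zeta_1(z,k)| \lesssim e^{-m_e\omega_p^2/(4T|k|^2)}$; inserting this bound into the implicit-function equation for the correction and bootstrapping at each order of $k$-derivative yields $|\nabla^j\lambda(k)|\lesssim_{j,N}|k|^N$ for every $j,N$. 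Strict positivity $\lambda(k)>0$ follows from the Penrose stability of the Maxwellian: Plemelj gives $\Im\cL(iw,k) = \pi w_0 |k|^{-2}(f_k^0)'(w/|k|)$, which is strictly nonzero for $w\neq 0$ by strict monotonicity of the 1D Maxwellian marginal, so $\cL(z,k)=1$ admits no purely imaginary solution near $\pm i\omega_p$ for $k\neq 0$, and the absence of unstable poles forces $p_\pm(k)$ into the open left half-plane. The main obstacle will be propagating the exponentially small bound on $\Re p_+(k)$ uniformly to all its $k$-derivatives in a shrinking complex sector around $z=i\omega_p$, which requires uniform-in-$k$ control of $\zeta_1$ and its derivatives.
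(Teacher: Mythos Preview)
Your overall strategy matches the paper's: apply the implicit function theorem to $1-\cL(z,k)$ near $(i\omega_p,0)$, verify $\partial_z\cL(i\omega_p,0)=2i/\omega_p\neq 0$, and read off the expansions \eqref{eq:OmegaO1}--\eqref{eq:OmegaO3} from Lemma~\ref{lem:Lzexp} together with implicit differentiation. Your argument for $\lambda(k)>0$ via the Plemelj formula is also essentially what the paper invokes.

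The one genuine difference is your route to \eqref{eq:lamEsts}. You propose to track the transcendental remainder $\zeta_1(z,k)\sim e^{m_ez^2/(2T|k|^2)}$ explicitly and bootstrap its exponentially small size through the implicit equation to bound each $\nabla^j\lambda(k)$; you correctly identify the uniform control of derivatives of $\zeta_1$ as the technical obstacle. The paper bypasses this entirely with a cleaner argument: it observes (by iterating the integration by parts) that for every $m$, the leading term of $\nabla_k^m\cL(z,k)$ is an \emph{even} power of $z^{-1}$ and that of $\nabla_k^m\partial_z\cL(z,k)$ is an \emph{odd} power of $z^{-1}$, so at $z=i\omega_p$ these are purely real and purely imaginary respectively. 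Feeding this parity pattern into the implicit-differentiation formulas \eqref{eq:gradkp}--\eqref{eq:grad2kp} (and their higher-order analogues) shows directly that every derivative $\nabla^m p_+(0)$ is purely imaginary, hence the Taylor series of $\lambda=-\Re p_+$ at $k=0$ vanishes identically, and \eqref{eq:lamEsts} follows from Taylor's theorem for the smooth function $\lambda$. This avoids any tracking of the Gaussian tail or the bootstrapping you anticipate; the ``obstacle'' you flag is an artifact of your chosen route, not of the problem.
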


\begin{remark}
  This expansion of $\Im p_\pm(k) = \pm \Omega(k)$ provides the rigorous
  justification for the \emph{Bohm-Gross dispersion} relation in
  kinetic theory. Regarding the real part
  $\Re p_\pm(k) = -\lambda(k)$, physicists assert that
(see e.g. \cite[pp.419]{GoldstonRutherford95}), 
  \begin{align}
    \lambda(k) \approx  \sqrt{\pi}\frac{m_e^{3/2} \omega_p^4}{\abs{k}^3 T^{3/2}}
    \exp \left( - \frac{m_e \omega_p^2}{4\abs{k}^2 T} \right), 
  \end{align}
  however at this time we lack a mathematically rigorous explanation
  for this exact prediction.
\end{remark}
\begin{proof}
  Since $\widehat{f^0}$ is real, $p_+ = \overline{p_-}$ and it is
  enough to build the branch near $+i \omega_p$. By the implicit
  function theorem applied to the function $\cL$ of
  $(z,k) \in \C \times \R^d$, the result follows by verifying
  $\partial_z \cL(i\omega_p,0) \neq 0$, since $\cL$ is smooth in
  $(z,k)$ and analytic in $z$ in this neighborhood. Roughly speaking
  we want to take derivatives of the
  expansion~\eqref{eq:Lexp}. From~\eqref{eq:gpenintermed} and
  integrating by parts as in the proof of Lemma \ref{lem:Lzexp},
  \begin{align*}
    \partial_z \cL(z,k)
    & =  \frac{w_0}{\abs{k}^3}\int_0^\infty
      e^{-\frac{z}{\abs{k}} s} s^2 \widehat{f^0} \left(\hat k s\right) ds  
     = \frac{w_0}{z^2|k|} \int_0^\infty
      e^{-\frac{z}{\abs{k}} s} \partial_s ^2
      \left(s^2 \widehat{f^0}\left(\hat k s\right)\right) ds \\
    & = \frac{2w_0 n_0}{z^3} +
      \frac{w_0}{z^3} \int_0^\infty e^{-\frac{z}{\abs{k}} s}
      \partial_s ^3\left(s^2 \widehat{f^0}
      \left(\hat k s\right)\right) ds \\
    & = \frac{2w_0 n_0}{z^3} +
      \frac{2 w_0|k|}{z^4} \left[ \hat{k} \cdot \nabla
      \widehat{f^0}\left(\hat{k}s\right) \right] + \frac{w_0|k|}{z^4} \int_0^\infty e^{-\frac{z}{\abs{k}} s}
      \partial_s ^4\left(s^2 \widehat{f^0}  \left(\hat k
      s\right)\right) ds \\ 
    & =  \frac{2w_0 n_0}{z^3} +
      \frac{w_0|k|}{z^4} \zeta'(z,k) 
  \end{align*}
  with $\zeta'(z,k)$ that is uniformly bounded for
  $|z - i \omega_p|< \epsilon$ and $|k| < \nu_0$ (using the
  analyticity of $f^0$). Hence
  $\partial_z \cL(i\omega_p,0) = \frac{2 \omega_p^2}{(i\omega_p)^3} =
  \frac{2i}{\omega_p} \not =0$. The implicit function theorem then
  implies the existence of a unique smooth solution $k \mapsto p_+(k)$
  to the equation $\cL(p_+(k),k) = 1$ in a neighborhood of
  $i\omega_p$.

  To get more precise information on the behavior of the poles near
  $k = 0$, we need to compute derivatives in $k$ as well of $\cL$.
  Expansion~\eqref{eq:OmegaO1} immediately follows from Lemmas
  \ref{lem:Lzexp}:
  \begin{align*}
    p_+^2 = p_+^2 \cL(p_+,k)
     =  \omega_p ^2 -
      \frac{3T\omega_p^2}{m_e p_+^2} |k|^2 + \cO \left(|k|^4 \right)  
     =  \omega_p ^2 + \frac{3T}{m_e} |k|^2 + \cO \left(|k|^4\right).
  \end{align*}

  Next, observe that
  \begin{align}
    \label{eq:gradkp}
    \nabla p_+
     = -\frac{(\grad_k \cL)(p_+,k)}{(\partial_z \cL)(p_+,k)},
  \end{align}
  \begin{align}
  \label{eq:grad2kp}
    \nabla^2 p_+
     = - (\partial_z \cL)(p_+,k))^{-1} \left[ (\nabla^2_k
      \cL)(p_+,k) + (\nabla_k \partial_z \cL)(p_+,k) \cdot
      \nabla p_+ + (\partial_z ^2 \cL)(p_+,k) |\nabla p_+|^2  \right]. 
\end{align}
By the same integration by parts method used in Lemma \ref{lem:Lzexp},
we obtain
\begin{align*}
  \nabla_k \cL(z,k)
  & = -\frac{w_0}{\abs{k}^3}  \int_0^\infty e^{-
    \frac{z}{\abs{k}}s}
    s^2 \nabla \widehat{f^0}(\hat{k} s) ds 
   = \frac{2 w_0 k}{z^4} \left[ \hat{k} \otimes \hat{k} : \nabla^2
    \widehat{f^0}(0) \right] + \cO\left(\abs{k}^3\right).
\end{align*}
Using $\nabla^2 \widehat{f^0}(0) = 3n_0 \frac{T}{m_e}
\mbox{Id}$ and $w_0 = \omega_p^2 n_0^{-1}$, this  yields
\begin{align*}
  \nabla_k \cL(p_+(k),k)  = \frac{6 T}{m_e \omega_p^2} k
  + \cO\left(\abs{k}^3\right), 
\end{align*} 
which implies \eqref{eq:OmegaO2}.  The proof of~\eqref{eq:OmegaO3} is
similar, using now~\eqref{eq:grad2kp} instead of~\eqref{eq:gradkp};
the lengthier calculations are omitted for the sake of brevity. In
these calculations a clear pattern emerges: in all derivatives
$\grad_k^m \cL(z,k)$ (respectively $\grad_k^m \partial_z \cL(z,k)$),
for $m \in \N$, the leading order as $k \to 0$ is an even (resp. odd)
power of $z^{-1}$, and thus at $z = \pm i \omega_p$ all derivatives
$\grad_k^m \cL(i\omega_p,0)$ (resp.
$\grad_k^m \partial_z \cL(i\omega_p,0)$) are purely real (resp. purely
imaginary). This proves all derivatives $\nabla^m p_+(0)$ are purely
imaginary and implies thus~\eqref{eq:lamEsts}, i.e. $\Re p_+=-\lambda$
vanishes to infinite order at $k=0$.  Observe that since nevertheless
$\Re p_+ < 0$ as $k \sim 0$ (by Lemma \ref{lem:HiFL}), the function
$k \mapsto p_+(k)$ differs from its Taylor series at $k=0$ and is
therefore not analytic.
\end{proof}

\subsection{Spectral surgery and extraction of Klein-Gordon waves}
\label{sec:surgery}
Through~\eqref{eq:LapVolt}, the solution to the Volterra equation
\eqref{eq:Volt} is classically~\cite{book-volterra} given formally as:
\begin{align}
  \label{eq:VoltSol}
  \hat{\rho}(t,k) = \widehat{h_{in}}(k,kt) +\int_0^t \cR(t-\tau,k)
  \widehat{h_{in}}(k,k\tau) d\tau, 
\end{align} 
where the \emph{resolvent kernel} $\cR$ is given by the inverse
Laplace transform
\begin{align*} 
  \cR(t,k) = \frac{1}{2i\pi}\int_{\gamma - i\infty}^{\gamma + i \infty}
  e^{zt} \frac{\cL(z,k)}{1-\cL(z,k)} dz, 
\end{align*}
for a suitable Bromwich contour such that
$z \mapsto \frac{\cL(z,k)}{1-\cL(z,k)}$ is holomorphic for
$\Re z > \gamma-0$.

The calculations in the two previous
Subsections~\ref{sec:estim-disp-funct}-\ref{sec:constr-branch-poles},
show that for $\abs{k}<\nu_0$ sufficiently small,
$\frac{\cL(\cdot,k)}{1-\cL(\cdot,k)}$ is holomorphic in the region
$\mathbb H_{\epsilon,\delta'}$ represented in Figure~\ref{pic:hdelta}
(the half-plane $\Re z \le - \delta'|k|$ minus $\epsilon$-discs around
for the two poles), with one isolated pole $p_{\pm}(k)$ in each disc,
depending on $k$ as studied in the last Subsection. Therefore, by
Cauchy's Residue theorem,
\begin{align*} 
  \cR(t,k)
  & = \left( \frac{e^{p_+(k) t}}{-\partial_z \cL(p_+(k),k)}
  + \frac{e^{p_-(k) t}}{-\partial_z \cL(p_-(k),k)} \right) 
  + \frac{1}{2i\pi}\int_{\gamma' - i\infty}^{\gamma' + i
    \infty} e^{zt} \frac{\cL(z,k)}{1-\cL(z,k)} dz \\
  & =: \underbrace{\cR_{KG}^+(t,k) + \cR^-_{KG} (t,k)}_{\cR_{KG}(t,k)}
    + \cR_{RFT}(t,k), 
\end{align*}
for some $\gamma'\in (-\delta',-\epsilon)$ so that the vertical line
is to the \emph{left} of the poles $p_{\pm}(k)$ but still in
$\mathbb H_{\epsilon,\delta'}$ where $\cL/(1-\cL)$ is
meromorphic. This decomposes the resolvent $\cR=\cR_{KG} + \cR_{LD}$
into a \emph{Klein-Gordon} part and a \emph{remainder free transport}
part, which then yields a corresponding decomposition of the density
$\hat{\rho}(t,k)$ through~\eqref{eq:VoltSol}:
\begin{align*}
  \hat{\rho}(t,k)
  & = \widehat{h_{in}}(k,kt) +
    \int_0^t \cR_{KG}(t-\tau,k) \widehat{h_{in}}(k,k\tau) d\tau + \int_0^t
    \cR_{RFT}(t-\tau,k) \widehat{h_{in}}(k,k\tau) d\tau \\
  & =:\hat{\rho}_{FT}(t,k) + \hat{\rho}_{KG}^+(t,k)
    + \hat{\rho}_{KG}^-(t,k)  + \hat{\rho}_{RFT}(t,k). 
\end{align*}
We first prove a general expansion of $\hat{\rho}_{KG}^\pm(t,k)$ by
successive integrations in time (producing additional powers of
$k$). Terms in this expansion are either comparable to solutions to
free transport (with additional Fourier multipliers) or to
solutions to a Klein-Gordon-like evolution equation.
\begin{lemma}[Expansion of the Klein-Gordon density]
  \label{lem:RKGexp}
  For all $\abs{k} < \nu_0$ and all $\ell \in \N$, we have
  \begin{align}
    \nonumber
    \hat{\rho}_{KG}^\pm(t,k)
    & = \sum_{j=0} ^\ell e^{p_\pm(k) t} A_j^\pm(k) \left[ k^{\otimes j}
      : \grad_\eta ^j \widehat{h_{in}}(k,0) \right]
      - \sum_{j=0} ^\ell A_j^\pm(k) \left[ k^{\otimes j}
      : \grad_\eta ^j \widehat{h_{in}}(k,kt) \right] \\
    \label{eq:dec-rho-t}
    & \qquad + \int_0 ^t \cR_{KG}(t-\tau,k)
      A^\pm _{\ell+1}(k) \left[ k^{\otimes (\ell+1)}
      : \grad_\eta ^{\ell+1} \widehat{h_{in}}(k,k\tau) \right] d\tau
  \end{align}
  where $\nabla_\eta \widehat{h_{in}}(k,\eta)$ is the differential in
  the second Fourier variable, and with the notation
  \begin{align*}
    A^\pm_j(k) := -\frac{J_\pm(k)}{p_\pm(k)^{j+1}} \quad \text{ and }
    \quad J_{\pm}(k) := - \frac{1}{\partial_z \cL(p_{\pm}(k),k))}. 
  \end{align*}
\end{lemma}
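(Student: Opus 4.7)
The starting point is the explicit formula $\cR_{KG}^\pm(t,k) = J_\pm(k)\, e^{p_\pm(k) t}$ produced by the residue computation in Subsection~\ref{sec:surgery}, so that
\begin{align*}
\hat{\rho}_{KG}^\pm(t,k) = J_\pm(k) \int_0^t e^{p_\pm(k)(t-\tau)} \widehat{h_{in}}(k, k\tau) \, d\tau.
\end{align*}
The plan is to induct on $\ell$ by integrating by parts in $\tau$ inside this Duhamel integral, transferring $\tau$-derivatives off the exponential and onto the free-streamed data $\widehat{h_{in}}(k,k\tau)$.

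At each step I antidifferentiate $e^{p_\pm(k)(t-\tau)}$ in $\tau$, which contributes a factor $1/p_\pm(k)$, and differentiate the data, producing $k\cdot \nabla_\eta \widehat{h_{in}}(k, k\tau)$ by the chain rule (hence the extra factor of $k$ per step, which is what makes the tail of the expansion small as $k \to 0$). The boundary evaluation at $\tau = t$ produces a free-transport-like term proportional to $\widehat{h_{in}}(k,kt)$, while the boundary evaluation at $\tau = 0$ produces a Klein-Gordon-like term proportional to $e^{p_\pm(k)t} \widehat{h_{in}}(k,0)$. Iterating $\ell+1$ times converts the single derivative into an $(\ell+1)$-fold tensor $k^{\otimes(\ell+1)} : \nabla_\eta^{\ell+1}$, and the accumulated scalar prefactor $J_\pm(k)/p_\pm(k)^{j+1}$ at level $j$ is exactly $-A_j^\pm(k)$ from the lemma.

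Collecting the boundary contributions produced at each step (with careful sign tracking from the successive antidifferentiations) yields precisely the two explicit sums displayed in~\eqref{eq:dec-rho-t}. The leftover integral at level $\ell+1$ takes the form
\begin{align*}
J_\pm(k) \int_0^t e^{p_\pm(k)(t-\tau)} \frac{1}{p_\pm(k)^{\ell+1}} \left[k^{\otimes(\ell+1)} : \nabla_\eta^{\ell+1} \widehat{h_{in}}(k,k\tau)\right] d\tau,
\end{align*}
and is repackaged using $\cR_{KG}^\pm(t,k)= J_\pm(k)e^{p_\pm(k)t}$ together with the definition $A_{\ell+1}^\pm(k) = -J_\pm(k)/p_\pm(k)^{\ell+2}$ to match the remainder in~\eqref{eq:dec-rho-t}.

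The argument is purely algebraic: no estimates are needed for this identity, it is an exact expansion. The only real obstacle is careful sign and index bookkeeping through successive integrations by parts. Smallness of $|k|$ plays no role in the derivation itself; it enters only through the standing requirement that $p_\pm(k)$ and $J_\pm(k)$ be well-defined smooth functions, which is furnished on $|k|<\nu_0$ by Lemma~\ref{lem:IFT} and the non-degeneracy $\partial_z \cL(\pm i\omega_p,0)\neq 0$ established there.
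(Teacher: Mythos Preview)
Your approach is exactly the paper's: start from $\cR_{KG}^\pm(t,k)=J_\pm(k)e^{p_\pm(k)t}$ and integrate by parts repeatedly in $\tau$, collecting boundary terms at $\tau=0,t$ and leaving a remainder with one extra factor of $k\cdot\nabla_\eta$ at each step. One small caveat: your final ``repackaging'' does not literally reproduce the remainder as written in~\eqref{eq:dec-rho-t}, since $\cR_{KG}^\pm(t-\tau,k)\,A_{\ell+1}^\pm(k)=-\tfrac{J_\pm^2}{p_\pm^{\ell+2}}e^{p_\pm(t-\tau)}$ rather than $\tfrac{J_\pm}{p_\pm^{\ell+1}}e^{p_\pm(t-\tau)}$ (and the statement even writes $\cR_{KG}$ rather than $\cR_{KG}^\pm$); this is a typographical inconsistency in the stated lemma, not a defect in your argument, and the correct remainder is the one you computed before repackaging.
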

\begin{remark}
Note that the Fourier multipliers $A_j^{\pm}$ are smooth and bounded for $\abs{k} < \nu_0$. 
\end{remark}
\begin{proof}
  Integrating by parts in time gives
  \begin{align*}
    & \hat{\rho}_{KG}^\pm(t,k)
      = \int_0^t \cR_{KG} ^\pm(t-\tau,k)
      \widehat{h_{in}}(k,k\tau) d\tau \\ 
    & = \int_0^t J_\pm(k) e^{p_\pm(k) (t-\tau)} \widehat{h_{in}}(k,k\tau) d\tau
      = -\int_0^t \frac{J_\pm(k)}{p_\pm(k)} \partial_\tau \left(
      e^{-\lambda(k)(t-\tau) \pm i \Omega(k)
      (t-\tau)} \right) \widehat{h_{in}}(k,k\tau) d\tau \\
    & = \frac{J_\pm(k)}{p_\pm(k)} H(k,kt)
      - \frac{J_\pm(k)}{p_\pm(k)} e^{p_\pm(k) t} \widehat{h_{in}}(k,0) 
      + \int_0^t \frac{J_\pm(k)}{p_\pm(k)}
      e^{p_\pm(k) (t-\tau)} \left[ k \cdot \grad_\eta \widehat{h_{in}}(k,k\tau) \right] d\tau, 
  \end{align*}
  and iterating finitely many times yields the result. 
\end{proof}
Note that by symmetry $J_+(k) = \overline{J_-(k)}$ and
$A_j^-(k) = \overline{A_j^+(k)}$, and the calculations of
Lemma~\ref{lem:IFT} give the expansion
$J_\pm(k) = \mp \frac{\omega_p}{2i} + O(\abs{k}^2)$ which allows to
expand the coefficients $A^\pm_j(k)$ in~\eqref{eq:dec-rho-t}. Denoting
$p_\pm(k) = - \lambda(k) \pm i \Omega(k)$ with $\lambda(k) >0$ and
$\Omega(k) = \omega_p + \cO(|k|^2)$, it immediately implies:
\begin{lemma}[Klein-Gordon coefficients at low frequencies]
  \label{lem:PoleExDep}
  One has as $k \to 0$,
  \begin{align}
    \label{def:R0exp}
    A_0 ^+(k) + A_0 ^-(k)
    & = 1 + \cO(\abs{k}^2) \\  
    A_1^+(k) + A_1 ^-(k)
    & = \cO(\abs{k}^2)  \\
    \label{def:Q0exp}
    e^{p_+(k)t} A_0^+(k) + e^{p_-(k)t} A_0^-(k)
    & = e^{-\lambda(k)t} \left[ \cos \left[\Omega(k) t\right] + \cO(\abs{k}^2)
      e^{i\Omega(k) t}
      + \cO(\abs{k}^2) e^{-i\Omega(k) t} \right]  \\
    \label{def:Q1exp}
    e^{p_+(k)t} A_1^+(k) + e^{p_-(k)t} A_1^-(k)
    & = e^{-\lambda(k)t} \left[ \frac{\sin \left[\Omega(k) t\right]}{\Omega(k)}
      + \cO(\abs{k}^2) e^{i\Omega(k) t} + \cO(\abs{k}^2) e^{-i\Omega(k) t} \right], 
  \end{align}
  where the $\cO(\abs{k}^2)$ in the above represent infinitely
  differentiable, bounded functions of $k$ which are independent of
  time.
\end{lemma}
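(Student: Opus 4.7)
The plan is to combine three observations developed in the previous subsections: the explicit values $p_\pm(0) = \pm i\omega_p$ and $\partial_z \cL(i\omega_p, 0) = 2i/\omega_p$ extracted from the proof of Lemma~\ref{lem:IFT}, the parity $\cL(z,k) = \cL(z,-k)$ inherited from the radial symmetry of $f^0$, and the conjugation symmetry $A_j^-(k) = \overline{A_j^+(k)}$ coming from $\overline{\cL(\bar z, k)} = \cL(z,k)$.

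First I would evaluate at $k=0$: from $\partial_z\cL(i\omega_p,0) = 2i/\omega_p$ one reads off $J_+(0) = i\omega_p/2$ (and $J_-(0) = -i\omega_p/2$ by conjugation), and substitution into the definitions gives
\[
A_0^\pm(0) = \tfrac12, \qquad A_1^\pm(0) = \mp \tfrac{i}{2\omega_p},
\]
hence $A_0^+(0) + A_0^-(0) = 1$ and $A_1^+(0) + A_1^-(0) = 0$ pointwise. To upgrade these into the $O(|k|^2)$ estimates claimed in~\eqref{def:R0exp}, I would observe from~\eqref{eq:gpenintermed} and the radial symmetry of $\widehat{f^0}$ that $\cL(z,k) = \cL(z,-k)$; therefore $p_\pm, J_\pm$, and each $A_j^\pm$ are smooth \emph{even} functions of $k$ on $|k| < \nu_0$ (smoothness is given by Lemma~\ref{lem:IFT}), so their Taylor expansions carry no linear term and the remainders are smooth bounded $O(|k|^2)$ functions independent of $t$.

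For the oscillatory identities~\eqref{def:Q0exp}--\eqref{def:Q1exp} I would exploit the conjugation symmetry by writing $A_0^+(k) = \tfrac12 + a(k) + ib(k)$ with $a,b$ real, smooth and $O(|k|^2)$, so that $A_0^-(k) = \tfrac12 + a(k) - ib(k)$. A direct expansion yields
\[
e^{i\Omega(k)t} A_0^+(k) + e^{-i\Omega(k)t} A_0^-(k)
= \cos(\Omega(k)t) + \bigl(a(k)+ib(k)\bigr) e^{i\Omega(k)t}
+ \bigl(a(k)-ib(k)\bigr) e^{-i\Omega(k)t},
\]
and multiplying by $e^{-\lambda(k)t}$ produces~\eqref{def:Q0exp}. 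The argument for~\eqref{def:Q1exp} is structurally identical: write $A_1^+(k) = -i/(2\Omega(k)) + \tilde a(k) + i\tilde b(k)$ with $\tilde a, \tilde b$ real and $O(|k|^2)$, absorbing the discrepancy $1/(2\omega_p) - 1/(2\Omega(k)) = O(|k|^2)$ into $\tilde b$; the same trigonometric regrouping produces $\sin(\Omega(k)t)/\Omega(k)$ as the leading piece plus $O(|k|^2) e^{\pm i\Omega(k)t}$ corrections. The only substantive difficulty is bookkeeping---arranging the decomposition so that the $O(|k|^2)$ corrections multiply the oscillating exponentials $e^{\pm i\Omega(k)t}$ rather than modifying the clean $\cos$ and $\sin$ terms---and this is precisely what the combined parity and conjugation symmetries accomplish.
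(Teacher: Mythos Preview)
Your proposal is correct and follows essentially the same path as the paper, which presents the lemma as an immediate consequence of the conjugation symmetry $A_j^- = \overline{A_j^+}$ together with the expansions $J_\pm(k) = \mp\omega_p/(2i) + \cO(|k|^2)$ and $\Omega(k) = \omega_p + \cO(|k|^2)$ drawn from Lemma~\ref{lem:IFT}. Your use of the radiality of $\cL(z,\cdot)$ (hence of $p_\pm$, $J_\pm$, $A_j^\pm$) to secure the $\cO(|k|^2)$ remainders is a clean equivalent to invoking those expansions directly.
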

\begin{remark}
  Note crucially that~\eqref{def:R0exp} \emph{cancels} the leading
  order of the free transport evolution for long-waves as $k \to 0$,
  which leads to an improved decay of the Landau damping contribution
  of the electric field for such long-waves.
\end{remark}


We are now able to precise our decomposition of the electric field. 
\begin{definition}\label{def:dec-E}
  We define the variable precision decomposition,
  \begin{align*}
    &\widehat{E}_{LD}^{(1;\ell)}(t,k)
      = w_0\frac{ik}{|k|^2} \left[ 1 - A_0^+(k) - A_0^-(k) \right] \widehat{h_{in}}(k,kt)
      - w_0 \frac{ik}{|k|^2} \sum_{j=1}^\ell A_j^{\pm}(k)
      \left(k^{\otimes j} : \grad_\eta^j \widehat{h_{in}}(k,kt) \right) \\
    &\widehat{E}_{LD}^{(2)}(t,k) =  w_0\frac{ik}{|k|^2} \int_0^t
      \cR_{RFT}(t-\tau,k) \widehat{h_{in}}(k,k\tau) d \tau, \\
    &\widehat{E}_{KG}^{(1;\ell)}(t,k) = w_0\frac{ik}{|k|^2}
      \sum_{j=0}^\ell e^{p_{\pm}(k) t} A_j^{\pm}(k) \left( k^{\otimes
      j} : \grad_\eta ^j \widehat{h_{in}}(k,0) \right) \\
    &\widehat{E}_{KG}^{(2;\ell)}(t,k) = w_0\frac{ik}{\abs{k}^2}
      \int_0 ^t \cR_{KG}(t-\tau,k) A^\pm _{\ell+1}(k) \left[
      k^{\otimes (\ell+1)} : \grad_\eta ^{\ell+1} \widehat{h_{in}}(k,k\tau) \right] d\tau, 
\end{align*}
and accordingly define the particular decomposition we shall use in
the sequel (setting $\ell=4$) 
\begin{align*}
  & E_{LD} ^{(1)}
    := E_{LD}^{(1;4)}, \quad
   E_{LD}^{(2)} := \text{ as above} \\
  & E_{KG}^{(1)} := E_{KG}^{(1;4)}, \quad
   E_{KG}^{(2)} := E_{KG}^{(2;4)}, \\
  & E_{LD} := E_{LD} ^{(1)} + E_{LD} ^{(2)}, \quad
     E_{KG} := E_{KG} ^{(1)} + E_{KG} ^{(2)}.
\end{align*}
\end{definition}


Next, we estimate the `remainder free transport part' of the resolvent.
The gain in powers of $k$ present in \ref{lem:FTRE} is critical to the high quality decay rate of the Landau damping electric field. 
\begin{lemma}[Remainder free transport resolvent at low frequencies]
  \label{lem:FTRE}
  There exists $ \lambda_0 > 0$ such that for all $\abs{k} < \nu_0$ there holds, 
  \begin{align}
    \label{ineq:tildeR}
    \forall \, \abs{k} < \nu_0, \quad
    \abs{\cR_{RFT}(t,k)} \lesssim \abs{k}^3 e^{-\lambda_0 \abs{k} t}. 
  \end{align}
\end{lemma}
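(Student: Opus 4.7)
The plan is to deform the Bromwich contour defining $\cR_{RFT}$ onto a vertical line at $\Re z = -\lambda_0|k|$ (suitably bent to stay left of the poles $p_\pm(k)$), which automatically produces the factor $e^{-\lambda_0|k|t}$, and then to exploit the vanishing $\cR_{RFT}(t,0)\equiv 0$ to extract the $|k|^3$ factor by subtracting explicit meromorphic approximants whose contour integrals vanish.

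Concretely, starting from the Bromwich contour $\Re z=\gamma>0$, shift onto a contour $\Gamma'$ that runs along $\Re z=-\lambda_0|k|$ outside the $\eps$-discs around $\pm i\omega_p$ and is bent inside those discs so as to remain to the left of $p_\pm(k)$ while still lying in $\mathbb H_{\eps,\delta'}$. With $\lambda_0<\delta'$ sufficiently small, the resolvent bounds of Lemmas~\ref{lem:HiFL} and \ref{theo:resolvent} remain valid on $\Gamma'$, and Cauchy's residue theorem yields $\cR=\cR_{KG}+\cR_{RFT}$ with $|e^{zt}|\le e^{-\lambda_0|k|t}$ on $\Gamma'$, reducing the task to showing
\[
\int_{\Gamma'} \left| \frac{\cL(z,k)}{1-\cL(z,k)} \right| |dz| \lesssim |k|^3.
\]
Since $\cL(z,0)/(1-\cL(z,0))=-\omega_p^2/(z^2+\omega_p^2)$ has poles only at $\pm i\omega_p$ (to the right of $\Gamma'$) and decays at infinity, closing $\Gamma'$ to the left shows that its contour integral vanishes for $t>0$; subtracting,
\[
\cR_{RFT}(t,k)=\frac{1}{2\pi i}\int_{\Gamma'} e^{zt}\,\frac{\cL(z,k)-\cL(z,0)}{(1-\cL(z,k))(1-\cL(z,0))}\,dz.
\]
The same trick applied to the next Taylor term $|k|^2\,\partial_{|k|^2}[\cL/(1-\cL)]|_{k=0}$, which is proportional to $|k|^2/(z^2+\omega_p^2)^2$ and hence has double poles only at $\pm i\omega_p$ to the right of $\Gamma'$, permits a further subtraction. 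One then estimates the remainder region by region: the Plemelj lower bound $|1-\cL|\gtrsim|k|^{-2}$ from Lemma~\ref{theo:resolvent} in the low-frequency strip $|\omega|\lesssim|k|$; the asymptotic $|\cL(z,k)-\cL(z,0)|\lesssim|k|^2/|z|^4$ from Lemma~\ref{lem:Lzexp} for $|z|\gg|k|$ away from $\omega_p$; and the decay $|\cL|\lesssim|z|^{-2}$ at infinity.

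The main obstacle is the near-pole region $|\omega|\approx\omega_p$, where both $|1-\cL(z,k)|$ and $|1-\cL(z,0)|$ are only of order $|k|$ on $\Gamma'$ and a naive estimate gives only $O(|k|)$ or logarithmic blowup. The key cancellation, available thanks to Lemma~\ref{lem:IFT}, is that $p_\pm(k)\mp i\omega_p=O(|k|^2)$ and that the residues $J_\pm(k)$ of $\cL/(1-\cL)$ at $p_\pm(k)$ agree with those of $\cL_0/(1-\cL_0)$ at $\pm i\omega_p$ up to $O(|k|^2)$ corrections, so that after the two subtractions above the leading singular behavior of the integrand cancels near the poles. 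Matching this near-pole analysis with the Plemelj-based bounds across the transition region $|\omega|\sim|k|$, where the asymptotic expansion of $\cL$ breaks down, is the final technical step, and it is the combination of the $|k|^2$ gain from the second subtraction with the $\sim|k|$-wide effective support of the remaining integrand that produces the claimed $|k|^3$.
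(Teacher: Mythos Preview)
Your subtraction of the two Taylor terms is exactly what the paper does: the paper calls it $\mathcal{Q}(z)=\frac{\omega_p^2}{z^2+\omega_p^2}+\frac{\alpha|k|^2}{(z^2+\omega_p^2)^2}$ and uses that it is holomorphic in the left half-plane so its contour integral vanishes. So the algebraic core of your argument matches.

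The gap is in your contour choice and your handling of the near-pole region. On the vertical line $\Re z=-\lambda_0|k|$, the closest approach to $i\omega_p$ is at distance $r\sim|k|$, and there both $|z^2+\omega_p^2|$ and $|z^2(1-\cL)|$ are of order $r$. Using the exact identity
\[
\frac{\cL}{1-\cL}+\mathcal{Q}=\frac{O(|k|^4)}{(z^2-z^2\cL)(z^2+\omega_p^2)^2}
\]
(the $O(|k|^4)$ numerator having a nonvanishing constant part $\alpha^2|k|^4/z^2$ near $z=i\omega_p$), the integrand is $\sim|k|^4/r^3$, and
\[
\int_{\mathbb R}\frac{|k|^4}{\big((\lambda_0|k|)^2+(\omega-\omega_p)^2\big)^{3/2}}\,d\omega\;\sim\;|k|^2,
\]
not $|k|^3$. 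The residue-matching you invoke from Lemma~\ref{lem:IFT} does not help here: $\cL/(1-\cL)$ has a \emph{simple} pole at $p_+(k)$ while your second subtracted term has a \emph{double} pole at $i\omega_p$, so there is no further cancellation of the leading $|k|^4/r^3$ singularity. Your heuristic ``$|k|^2$ gain times $|k|$-wide support'' is off by one power: at $r\sim|k|$ the integrand is $\sim|k|$, not $|k|^2$.

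The paper sidesteps this entirely by a different contour: a short vertical segment $\Gamma_0$ for $|\omega|<R|k|$ (where Plemelj bounds give integrand $O(|k|^2)$ over length $|k|$), joined to two \emph{slanted} rays $\Gamma_\pm$ heading into the left half-plane inside $\Lambda_{\delta,\delta'}$. On $\Gamma_\pm$ one has $|z\mp i\omega_p|\gtrsim 1$ uniformly, so $|z^2+\omega_p^2|,\ |z^2(1-\cL)|\gtrsim 1$ and the subtracted integrand is $\lesssim|k|^4/|z|^2$, which integrates to $|k|^3$. The point is that the slanted contour never comes within $O(1)$ of the poles, so no delicate near-pole cancellation is needed.
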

\begin{proof}
  We add and subtract by the expected leading order behavior as $k \to
  0$ (using that the integration path is away from $z=0$),
  hence define for $\alpha := 3T\omega_p^2 m_e^{-1}$, 
\begin{align*}
\mathcal{Q}(z) = \frac{\omega_p^2}{z^2 + \omega_p^2} + \frac{ \alpha \abs{k}^2}{(z^2 + \omega_p^2)^2}.  
\end{align*}
The function $z \mapsto e^{zt} \mathcal{Q}(z,k)$ is holomorphic and decaying in the \emph{left} half-plane $\Re z < -\gamma' \abs{k}$,
and hence we deduce, deforming the contour suitably, 
  \begin{align*}
    \cR_{RFT}(t,k) & = \frac{1}{2i\pi } \left( \int_{\Gamma_0} + \int_{\Gamma_+} +
      \int_{\Gamma_-} \right)
      e^{zt} \left(\frac{\cL(z,k)}{1-\cL(z,k)} + \mathcal{Q}(z,k) \right) dz \\ 
    & =: \cR_{RFT}^0 + \cR_{RFT}^+ + \cR_{RFT}^-  \qquad \text{with} \\[3mm]
    & \Gamma_0  = \Big\{z = \lambda + i\omega : \lambda = -\delta\abs{k},
      \omega \in (-R\abs{k}, R\abs{k}) \Big\} \\[3mm]
    & \Gamma_+
      = \Big\{z = \lambda - i(1+\delta)\lambda + i \big[ R-(1+\delta)\delta
      \big] \abs{k} : \lambda \in
      (-\infty,- \delta\abs{k}] \Big\} \\[3mm]
    &  \Gamma_-  = \Big\{z = \lambda + i(1+\delta)\lambda - i\big[R-(1+\delta)\delta \big] \abs{k}: \lambda \in
      (-\infty,- \delta\abs{k}] \Big\}.
  \end{align*}
\begin{center}
  \includegraphics[width=0.7\textwidth]{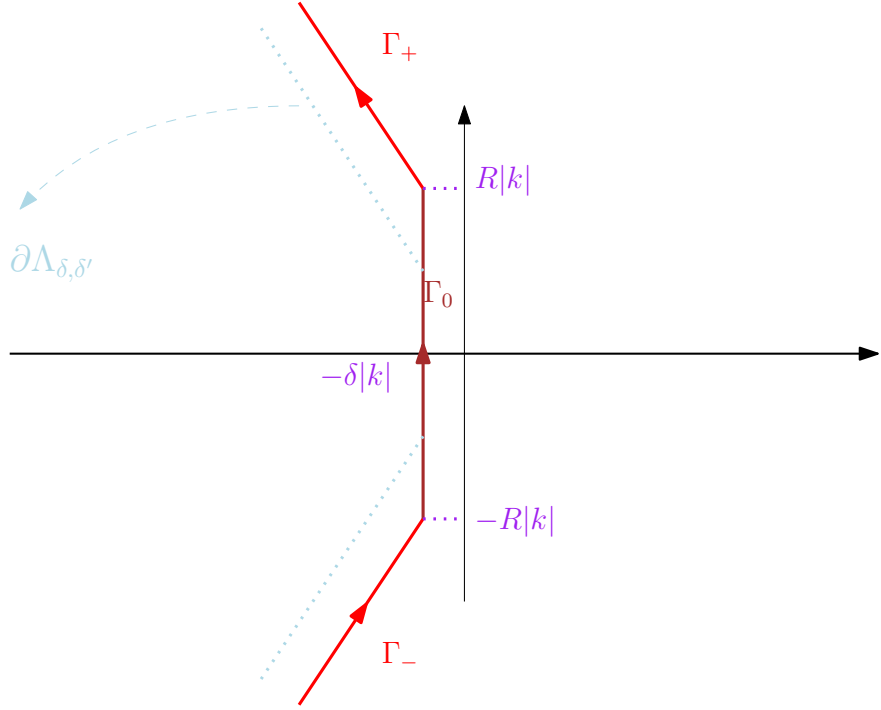}
  \captionof{figure}{The contour of integration (note that it is slightly modified as compared to $\partial \Lambda_{\delta,\delta'}$)}
  \label{pic:contour}
  \medskip
\end{center}
We separate cases as in Lemma \ref{theo:resolvent}. Consider first
$z \in \Gamma_0$ with $\abs{\Im z} < \delta'\abs{k}$.  As in the proof
of Lemma \ref{theo:resolvent}, in this region, there holds the
following expansion, valid for $\delta'$ sufficiently small,
  \begin{align*}
    \forall \, z \in \Gamma_0 \text{ with } \abs{\Im z} < \delta'\abs{k}, \quad
    \cL(\Im z,k) = -\frac{\omega_p^2}{\abs{k}^2} +\cO\left(\frac{\abs{\Im z}}{\abs{k}^3}\right), 
  \end{align*}
and for all $\abs{z} \lesssim \abs{k}$, 
\begin{align*}
\abs{\cL(z,k)} & \lesssim \abs{k}^{-2}, \quad \abs{\partial_z \cL(z,k)} \lesssim \abs{k}^{-3}. 
\end{align*}
Therefore, for $\delta$ and $\delta'$ sufficiently small, we have $\cL(z,k) \approx \abs{k}^{-2}$ for $z \in \Gamma_0$ with $\abs{\Im z} < \delta' \abs{k}$.
Next consider the case $z \in \Gamma_0$ with $\delta'\abs{k} \leq \abs{\Im z} \leq R \abs{k}$. By similar arguments as above and in Lemma \ref{theo:resolvent}, we have $\abs{1 - \cL(z,k)} \gtrsim \abs{k}^{-2}$ and $\abs{\cL(z,k)} \lesssim \abs{k}^{-2}$.  
Therefore, on  $\Gamma_0$, the integrand $\cO( \abs{k}^{2} )$, resulting in the estimate 
\begin{align*}
\abs{\int_{\Gamma_0} e^{zt} \left(\frac{z^2\cL(z,k) + \omega_p^2}{(1-\cL(z,k))(z^2+\omega_p^2)} + \frac{\alpha \abs{k}^2}{(z^2 + \omega_p^2)^2} \right) dz}  \lesssim \abs{k}^3 e^{-\delta \abs{k} t}. 
\end{align*}
This completes the estimates on $\Gamma_0$.

We next turn to $\Gamma_+,\Gamma_-$. We need only consider $\Gamma_+$, $\Gamma_-$ is
analogous. We use now $|z| \gg |k|$ and the decomposition from
Lemma~\ref{lem:Lzexp}:
  \begin{align*}
    \cL(z,k) = -\frac{\omega_p^2}{z^2} -
    \frac{ 3 \omega_p^2 T\abs{k}^2}{m_ez^4}
    + \frac{\abs{k}^4}{z^6} \zeta(z,k)
  \end{align*}
  with $\zeta(z,k)$ uniformly bounded on $\Gamma_+$ and decaying at
  infinity.
  Using this expansion we have 
  \begin{align*}
    \frac{\cL(z,k)}{1-\cL(z,k)} + \mathcal{Q}(z,k) & = \frac{z^2 \cL(z,k) + \omega_p^2}{(1-\cL)(z^2 + \omega_p^2} + \frac{\alpha \abs{k}^2}{(z^2 - \omega_p^2)(z^2 + \omega_p^2)} \\
    & = \frac{\alpha k^2(z^2 \cL - \omega_p^2}{(z^2 - z^2 \cL)(z^2 + \omega_p^2)^2} + \frac{\zeta(z,k) k^4}{z^2 (z^2 - z^2\cL)(z^2 + \omega_p^2)} \\ 
 & = \frac{\alpha k^2( \alpha k^2 + \zeta(z,k) k^4z^{-2}}{(z^2 - z^2 \cL)(z^2 + \omega_p^2)^2} + \frac{\zeta(z,k) k^4}{z^2 (z^2 - z^2\cL)(z^2 + \omega_p^2)}. 
  \end{align*}
Using the uniform boundedness of $\zeta(z,k)$ we integration then gives  
  \begin{align*}
    \abs{\cR_{RFT}^{+}}
     \lesssim e^{-\delta\abs{k} t} \int_{R\abs{k}}^\infty \frac{\abs{k}^4}{x^2} dx = \abs{k}^3 e^{-\delta \abs{k} t}.
\end{align*} 
This completes the proof of Lemma \ref{lem:FTRE}.
\end{proof}

We finally estimate the whole resolvent at frequencies bounded away
from zero, which is simpler.
\begin{lemma}[Non-small frequencies resolvent estimate]
  \label{lem:resolvent-high}
  Given any $\nu_0>0$ there is $\lambda_1 > 0$ such that
  \begin{align}
    \label{ineq:Rest}
    \forall \, |k| \ge \frac{\nu_0}{2}, \quad
    \cR(t,k) \lesssim  \frac{1}{\abs{k}}e^{-\lambda_1 \abs{k} t}.  
  \end{align}
\end{lemma}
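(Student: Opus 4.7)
The plan is to shift the Bromwich contour for $\mathcal{R}(t,k)$ to the left, all the way to the critical vertical line $\Re z = -\lambda|k|$ furnished by Lemma~\ref{lem:HiFL}, and then bound the resulting integral using the pointwise decay estimate \eqref{ineq:HikHiOmega} for $\mathcal{L}$ on that line. Since Lemma~\ref{lem:HiFL} is stated for $|k|>\nu_0$, I would first apply it with $\nu_0$ replaced by $\nu_0/2$ to obtain constants $\lambda_1=\lambda(\nu_0/2)>0$ and $\kappa=\kappa(\nu_0/2)>0$ such that
\begin{equation*}
  \forall\, |k|\geq \nu_0/2, \quad \inf_{\Re z>-\lambda_1|k|}|1-\mathcal{L}(z,k)|>\kappa,
\end{equation*}
and so $z\mapsto \mathcal{L}(z,k)/(1-\mathcal{L}(z,k))$ is holomorphic on the half-plane $\Re z>-\lambda_1|k|$.

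Next, I justify shifting the contour from $\Re z = \gamma$ (with $\gamma$ large) to $\Re z=-\lambda_1|k|$. The horizontal segments at $\Im z=\pm T$ connecting the two vertical lines have length comparable to $\gamma+\lambda_1|k|$, and on them \eqref{ineq:HikHiOmega} gives $|\mathcal{L}(z,k)|\lesssim T^{-2}$, so these segments carry an integrand of size $e^{\gamma t}T^{-2}$, which vanishes as $T\to\infty$. By Cauchy's theorem,
\begin{equation*}
  \mathcal{R}(t,k)=\frac{1}{2\pi i}\int_{-\lambda_1|k|-i\infty}^{-\lambda_1|k|+i\infty} e^{zt}\frac{\mathcal{L}(z,k)}{1-\mathcal{L}(z,k)}\,dz.
\end{equation*}

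Parameterizing $z=-\lambda_1|k|+i\omega$ and pulling out $e^{-\lambda_1|k|t}$, I estimate using \eqref{ineq:HikHiOmega} and the lower bound $|1-\mathcal{L}|>\kappa$:
\begin{equation*}
  |\mathcal{R}(t,k)|\lesssim e^{-\lambda_1|k|t}\int_{-\infty}^{\infty}\frac{1}{\kappa(1+|k|^2+\omega^2)}\,d\omega \lesssim \frac{e^{-\lambda_1|k|t}}{|k|},
\end{equation*}
which is exactly \eqref{ineq:Rest}.

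The only mildly delicate point is the justification of the contour shift: one must verify that no poles of $\mathcal{L}/(1-\mathcal{L})$ are crossed (guaranteed by \eqref{ineq:kaplwbd}) and that the integrand decays fast enough in $|\Im z|$ on the connecting horizontal segments. Both follow immediately from Lemma~\ref{lem:HiFL}, so no new analysis is needed. Everything else is a one-line computation.
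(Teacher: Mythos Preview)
Your proof is correct and follows essentially the same approach as the paper's: shift the Bromwich contour to $\Re z=-\lambda_1|k|$ using Lemma~\ref{lem:HiFL} to guarantee holomorphicity, then integrate the bound \eqref{ineq:HikHiOmega} against $|1-\mathcal{L}|^{-1}\leq\kappa^{-1}$ to pick up the factor $|k|^{-1}e^{-\lambda_1|k|t}$. You add a justification of the vanishing of the horizontal segments that the paper omits, but otherwise the two arguments are identical.
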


\begin{proof}[Proof of Lemma~\ref{lem:resolvent-high}]
  Choose $\lambda >0$ as in Lemma~\ref{lem:HiFL} and deform the
  contour to get (there are no poles in $\Re z \ge - \lambda |k|$ when
  $|k| \ge \nu_0/2$)
  \begin{align*}
    \cR(t,k) = \frac{1}{2i\pi}\int_{-\lambda \abs{k} - i\infty}^{-\lambda
    \abs{k} + i\infty} e^{z t} \frac{\cL(z,k)}{1 - \cL(z,k)} dz
  \end{align*}
  and using the estimate of Lemma~\ref{lem:HiFL} gives 
  \begin{align*}
    \abs{\cR(t,k)}
     \lesssim e^{-\lambda \abs{k} t} \int_{-\infty}^\infty
      \frac{1}{\abs{k}^2 + \abs{\omega}^2} d\omega 
     \lesssim \frac{1}{\abs{k}} e^{-\lambda \abs{k} t}.
  \end{align*}
\end{proof}


\subsection{Another look at the long wave hydrodynamic behavior}
\label{subsec:hydro}

In this subsection we prove Theorem \ref{thm:EPcorr} on the basis of
the decomposition of Definition~\ref{def:dec-E} and the previous
estimates of this section.

\begin{proof}[\textbf{Proof of Theorem \ref{thm:EPcorr}}]
  We prove the first part of the theorem, as the second part is seen directly from the decomposition \ref{def:dec-E}, together with some basic estimates that are similar or easier than what is required to prove the first part.
  For any field $F$, define
  \begin{align*}
    F_\epsilon(t,x) := \frac{1}{\epsilon^3} F
    \left(t,\frac{x}{\epsilon} \right),
    \qquad \text{with Fourier transform } \quad 
    \widehat{F_{\epsilon}}(t,k) = \widehat{F}(t,\epsilon k). 
  \end{align*}
Note that we have defined $h_{in}$ such that for all $\epsilon > 0$, $h_{in,\epsilon}(x,v) = \mathcal{H}_0(x,v)$. 
  Define the initial macroscopic flux (recall that $n_0$ is the total
  mass of $f^0(v)$)
  \begin{align*}
    j_{in}(x) := \frac{1}{n_0} \int_{\R^3} v h_{in} dv \qquad
    \text{with Fourier transform } \quad 
    \widehat{j_{in}}(k) = \frac{1}{n_0} i  \nabla_\eta \widehat{h_{in}}(k,0).
  \end{align*}
Denote,
\begin{align*}
\widehat{J}(k) & = \frac{1}{n_0} i \grad_\eta \widehat{\mathcal{H}_0}(k). 
\end{align*}
  The rescaled electric field $\cE_\epsilon$ in the linearized
  Euler-Poisson system~\eqref{eq:ep-lin} satisfies
  \begin{align*}
\epsilon \widehat{\mathcal{E}}_{\epsilon}(t,k) = \frac{ik}{\abs{k}^2}\widehat{\mathcal{H}_0}(k) \cos \Omega_{KG}(\epsilon k) t
    + \epsilon \omega_p^2 \frac{k}{\abs{k}^2} \left( k\cdot\widehat{J}(k) \right) \frac{\sin \Omega_{KG}(\epsilon k) t}{\Omega_{KG}(\epsilon k)}, 
  \end{align*}
  where we define the exact Euler-Poisson imaginary phase $\Omega_{KG}(k)$
  \begin{align*}
    \Omega_{KG}(k) := \sqrt{\omega_p^2 + \frac{3T}{m} \abs{k}^2}.
  \end{align*}
  Consider $E_{KG}^{(1)}$ first defined in the decomposition of
  Definition~\ref{def:dec-E}, since it is the contribution which is
  asymptotic to $\mathcal{E}_\epsilon$. Lemma~\ref{lem:PoleExDep}
  shows that
  \begin{align*}
    \epsilon\widehat{E}_{KG,\epsilon}^{(1)}(t,k) & = e^{-\lambda(\epsilon k) t} \frac{ik}{\abs{k}^2}\widehat{\mathcal{H}_0}(k) \cos \left[ \Omega(\epsilon k) t \right] + e^{-\lambda(\epsilon k) t} \epsilon \omega_p^2\frac{ik}{\abs{k}^2} \left( k \cdot \widehat{J}( k) \right) 
    \frac{\sin \left[ \Omega(\epsilon k) t\right]}{\Omega(\epsilon k)} \\  
    & \quad + \cO(\epsilon \abs{\epsilon k}^2) e^{-\lambda(\epsilon k) t} \norm{\mathcal{H}_0}_{W^{0,1}_{4}}.
  \end{align*}
  Note that the errors depend on time, but in a uniformly bounded way,
  and that they depend on $v$-moments of $h_{in}$ up to order $5$.
  By the expansion of $\Omega$ in Lemma \ref{lem:IFT}, we
  have
  \begin{align*}
    \left| \Omega(\epsilon k) - \Omega_{KG} (\epsilon k) \right|
    \lesssim \left| \sqrt{\omega_p^2 + \frac{3T}{m} \abs{\epsilon k}^2 +
    \cO\left(|\epsilon k|^4\right)} -
    \sqrt{\omega_p^2 + \frac{3T}{m} \abs{\epsilon k}^2} \right| \lesssim  | \epsilon k|^4
  \end{align*}
  and thus 
  \begin{align*}
    \Big| \cos \left[ \Omega(\epsilon k) t \right] - \cos
    \left[ \Omega_{KG}(\epsilon k) t \right] \Big| =
    \cO \left( \abs{\epsilon k}^4 t \right) , \quad 
    \Big| \sin \left[ \Omega(\epsilon k) t \right] - \sin \left[
    \Omega_{KG}(\epsilon k) t \right] \Big|  = \cO \left(
    \abs{\epsilon k}^4 t \right),
  \end{align*}
  and any $N > 0$, $\abs{1 - e^{-\lambda(\epsilon k) t}} = \cO \left(\abs{\epsilon k}^N t \right)$, and therefore  we deduce,  
  \begin{align*}
    \epsilon \abs{\widehat{E}_{KG,\epsilon}^{(1)} - \widehat{\cE}_\epsilon} \lesssim \left(\abs{\epsilon k}^4 t + \epsilon \abs{\epsilon k}^2\right) \norm{\mathcal{H}_0}_{W^{0,1}_{4}}. 
  \end{align*}
  and thus 
  \begin{align*}
    \epsilon \norm{E_{KG,\epsilon}^{(1)}(t) - \cE_\epsilon(t)}_{H^{-s}}
    & \lesssim \norm{\mathcal{H}_{0}}_{W^{0,1}_{4}}
      \left( \int_{\abs{\epsilon k}\lesssim 1} \frac{ \epsilon^2\abs{\epsilon k}^4 + \abs{\epsilon k}^8 t^2}{\brak{k}^{2s}} dk \right)^{\frac12}
      \lesssim
      \norm{\mathcal{H}_0}_{W^{0,1}_{4}}  \epsilon^{s-\frac{3}{2}-0} \langle
      t \rangle
  \end{align*}
  for $s \in (\frac32,\frac72)$.  Turn next to
  $\widehat{E}_{KG}^{(2)}$; from its definition
  \begin{align*}
    \epsilon \norm{E_{KG,\epsilon}^{(2)}(t)}_{H^{-s}}
    \lesssim \epsilon\int_0 ^t  \left(\int_{\abs{\epsilon k} \lesssim 1}
    \frac{1}{\abs{\epsilon k}^2 \brak{k}^{2s}} \left| \epsilon k \right|^{10}
    \left| \grad_\eta ^{5} \widehat{\mathcal{H}_0} ( k, k
    \tau) \right|^2 dk  \right)^{1/2} d\tau \lesssim \norm{\mathcal{H}_0}_{W^{0,1}_{4}} \epsilon^{s-\frac12-0}, 
  \end{align*}
  for $s \in (\frac32,\frac{19}{2})$. This completes the treatment of
  the `Klein-Gordon parts' of the electric field.

  We turn next to the Landau damping contributions, which in fact
  dominate the error.  It is convenient to subdivide the Landau
  damping field as in Definition \ref{def:dec-E}. The contribution of
  $E_{LD}^{(1)}$ is straightforward, indeed,
  \begin{align*}
    \epsilon\norm{E_{LD,\epsilon}^{(1)}(t)}_{H^{-s}}
    \lesssim \epsilon\left( \sum_{j=0}^4 \int_{\R^3}  \brak{k}^{-2s} \abs{\epsilon k}^{2} \abs{\abs{\epsilon k}^j \widehat{\grad_\eta^j \mathcal{H}}_0(k,kt)}^2  dk \right)^{\frac12} \lesssim
    \norm{\mathcal{H}_0 }_{W^{2,1}_{4}} \frac{\epsilon^2}{\brak{t}}, 
  \end{align*}
  for $s > \frac52$.  Turn finally to $E_{LD}^{(2)}$, which produces
  the dominant error.  Then,
\begin{align*}
  \epsilon \norm{E_{LD,\epsilon}^{(2)}(t)}_{H^{-s}}
  & \lesssim \epsilon\int_0^t \left( \int_{\R^3}
    \frac{\min(\abs{\epsilon k}^{2}, \abs{\epsilon k}^{-1})}{\brak{k}^{2s}}
    e^{-\lambda_0\abs{\epsilon k}(t-\tau)}\abs{\widehat{\mathcal{H}}_0(k,k\tau)}^2 dk  \right)^{1/2} d \tau \lesssim \epsilon^2 \norm{\mathcal{H}_0}_{W^{2,1}_{0}},  
\end{align*}
for $s > \frac{5}{2}$.  This completes the proof of Theorem
\ref{thm:EPcorr}.
\end{proof} 


\section{Electric field estimates}

\subsection{Landau damping estimates on the electric field}
\label{sec:FTest}

In this section we provide estimates for $E_{LD}$. We start with the
optimal decay estimates for the density for the kinetic free transport
(optimal in terms of time decay, not in the dependence on the initial
data). Denote the spatial density of the solution to the free
transport equation
\begin{align*}
  \mathfrak H(t,x) := \int_{\R^3} h_{in}(x-tv,v) dv \quad \text{ with
  Fourier transform } \quad \widehat{\mathfrak H}(t,k) =
  \widehat{h_{in}}(k,kt). 
\end{align*}

\begin{lemma} \label{lem:HbasBd}
  For all $\sigma \geq 0$,
  \begin{align}
    \label{ineq:HL1}
    \norm{ \brak{\grad_x, t \grad_x}^\sigma \mathfrak
    H(t,\cdot)}_{L^1_x} \\ 
    & \lesssim \norm{h_{in}}_{W^{\sigma,1}_{0}}  \\
    \label{ineq:HL2}
    \norm{ \brak{\grad_x, t \grad_x}^\sigma \mathfrak
    H(t,\cdot)}_{L^2_x} \\ 
    & \lesssim \brak{t}^{-3/2}\norm{h_{in}}_{W^{\sigma + 3/2 + 0,1}_{0}}  \\ 
    \label{ineq:HLinf}
    \norm{ \brak{\grad_x,t\grad_x}^\sigma \mathfrak H(t,\cdot)}_{L^\infty_x}
    & \lesssim \brak{t}^{-3}\norm{h_{in}}_{W^{\sigma+ 3+0,1}_{0}}. 
  \end{align}
  More generally, for all $1 \leq p \leq \infty$, $j \geq 0$,
  \begin{align}
    \label{ineq:LpGen}
    \norm{ \brak{\grad_x, t \grad_x}^\sigma \left( \grad_x^{\otimes j} :
    \int_{\R^3} v^{\otimes j} h_{in}(\cdot - tv,v) dv \right)}_{L^p_x}
    & \lesssim \brak{t}^{-j - 3\left(1-\frac{1}{p}\right)}  \norm{h_{in}}_{W^{\sigma+3+j+0,1}_{j}}. 
  \end{align}
\end{lemma}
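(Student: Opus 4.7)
The proof rests on two equivalent representations of the free-transport density $\mathfrak H$: the Fourier identity $\widehat{\mathfrak H}(t,k) = \widehat{h_{in}}(k, tk)$, and the physical change of variables $y = x - tv$ (for $t \neq 0$) giving $\mathfrak H(t,x) = t^{-3}\int_{\R^3} h_{in}(y,(x-y)/t)\, dy$. The first is adapted to the $L^2$ estimate, the second to the $L^\infty$ estimate, and the $L^1$ bound is immediate from Fubini. I would first reduce \eqref{ineq:HL1}--\eqref{ineq:HLinf} to the case $\sigma = 0$ by the observation that, as Fourier multipliers, $\brak{k, tk}^\sigma = \brak{k, \eta}^\sigma\big|_{\eta = tk}$, so $\brak{\grad_x, t\grad_x}^\sigma \mathfrak H(t,x) = \int_{\R^3} \tilde g(x-tv, v)\, dv$ with $\tilde g := \brak{\grad_x, \grad_v}^\sigma h_{in}$ and $\|\tilde g\|_{W^{s,1}_0} = \|h_{in}\|_{W^{\sigma+s, 1}_0}$ for any $s\ge 0$.

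With $\sigma = 0$, the $L^1$ bound is just $\|\mathfrak H(t)\|_{L^1_x} \le \|h_{in}\|_{L^1_{x,v}}$ by Fubini. For $L^\infty$ with $t\ge 1$, the change of variables yields
\begin{equation*}
  |\mathfrak H(t,x)| \le t^{-3}\int_{\R^3}\|h_{in}(y,\cdot)\|_{L^\infty_v}\, dy \le t^{-3} \|h_{in}\|_{L^1_x L^\infty_v},
\end{equation*}
and the Sobolev embedding $W^{3+0,1}(\R^3_v) \hookrightarrow L^\infty(\R^3_v)$ applied in $v$ only, followed by integration in $x$, controls the right-hand side by $\|h_{in}\|_{W^{3+0,1}_0}$. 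For $t\le 1$ a direct Sobolev argument gives the same bound since $\brak{t}^{-3}\asymp 1$.

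The main step is the $L^2$ estimate. Plancherel in $x$ gives $\|\mathfrak H(t)\|_{L^2_x}^2 \lesssim \int |\widehat{h_{in}}(k, tk)|^2\, dk$, and for $t\ge 1$ the change of variables $\eta = tk$ yields the prefactor $t^{-3}$. Writing $\check h_{in}(x,\eta) := \int e^{-i\eta v} h_{in}(x,v)\,dv$ for the partial Fourier transform in $v$, we have $\widehat{h_{in}}(k,\eta) = \mathcal F_x[\check h_{in}(\cdot,\eta)](k)$ and hence $\sup_k|\widehat{h_{in}}(k,\eta)| \le \|\check h_{in}(\cdot,\eta)\|_{L^1_x}$; Minkowski's integral inequality followed by Plancherel in $v$ then produces
\begin{equation*}
  \int_{\R^3} \|\check h_{in}(\cdot,\eta)\|_{L^1_x}^2\, d\eta
  \le \Big(\int_{\R^3}\|\check h_{in}(x,\cdot)\|_{L^2_\eta}\, dx\Big)^{\!2}
  \lesssim \|h_{in}\|_{L^1_x L^2_v}^2.
\end{equation*}
The Sobolev embedding $W^{3/2+0,1}(\R^3_v) \hookrightarrow L^2(\R^3_v)$ in $v$ only then yields $\|\mathfrak H(t)\|_{L^2} \lesssim t^{-3/2}\|h_{in}\|_{W^{3/2+0,1}_0}$ for $t \ge 1$; for $t\le 1$ one uses Minkowski in $v$ and Sobolev in $x$ directly to obtain the same estimate.

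For the moment estimate \eqref{ineq:LpGen}, starting from the identity $\grad_x h_{in}(x-tv,v) = -t^{-1}\grad_v[h_{in}(x-tv,v)] + t^{-1}(\partial_2 h_{in})(x-tv,v)$, each outer $\grad_x$ can be exchanged for a factor of $t^{-1}$ at the cost of trading one power of $v$ (via integration by parts in $v$) for at most one extra $v$-derivative falling on $h_{in}$. Iterating $j$ times, the quantity inside \eqref{ineq:LpGen} becomes a sum of terms of the form $t^{-j}\int_{\R^3} \tilde g_\alpha(x-tv, v)\,dv$ with $\tilde g_\alpha$ a combination of $v^{\otimes b}$ and $v$-derivatives of $h_{in}$ with $|\alpha|, b \le j$; to this sum we apply the three $\sigma$-weighted estimates above and interpolate to obtain the full $L^p$ bound with the extra factor $\brak{t}^{-3(1-1/p)}$. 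The main obstacle here is purely combinatorial bookkeeping of indices and Leibniz-rule terms; the analytic content is already contained in the Fourier–Minkowski–Sobolev scheme used for \eqref{ineq:HL1}--\eqref{ineq:HLinf}.
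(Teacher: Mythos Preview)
Your argument is correct, but it takes a different route from the paper's. The paper works entirely on the Fourier side: from $\widehat{\mathfrak H}(t,k)=\widehat{h_{in}}(k,kt)$ and the pointwise bound $|\widehat{h_{in}}(k,\eta)|\lesssim\brak{k,\eta}^{-s}\norm{h_{in}}_{W^{s,1}_0}$ one gets, via Hausdorff--Young for $L^\infty$ and Plancherel for $L^2$, the quantities $\int\brak{k,kt}^{-s}\,dk$ and $\int\brak{k,kt}^{-2s}\,dk$, which evaluate to $\brak{t}^{-3}$ for $s>3$ and $s>3/2$ respectively; the extra $\grad_x^{\otimes j}$ moments just insert a factor $|k|^j\lesssim\brak{k,kt}^j\brak{t}^{-j}$ into the same integrals, and interpolation gives the general $L^p$. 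By contrast you use the physical change of variables $y=x-tv$ to extract the $t^{-3}$ in $L^\infty$, a Plancherel/Minkowski argument for $L^2$, and Sobolev embeddings in $v$ only; for the moments you exploit the commutation $\grad_x=-t^{-1}\grad_v+t^{-1}\partial_2$ on $h_{in}(x-tv,v)$ and integrate by parts. Your approach makes the kinetic-transport dispersion mechanism more visible and separates the roles of $x$- and $v$-regularity cleanly; the paper's approach is shorter and treats all $p$ and all $j$ by a single Fourier computation without having to track the Leibniz terms you mention.
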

\begin{proof}
  The inequality~\eqref{ineq:HL1} is clear. To see \eqref{ineq:HLinf}
  note that
  \begin{align*}
    \norm{\grad_x^j \mathfrak H(t,\cdot)}_{L^\infty_x}  \lesssim
    \int_{\R^3} \abs{k}^j\abs{\widehat{\mathfrak H}(t,k)} dk \lesssim
    \left( \int_{\R^3} \frac{\abs{k}^j}{\brak{k,kt}^{3+j+}} dk\right)
    \norm{ h_{in}}_{W^{j+3+,1}_{0}} \lesssim \brak{t}^{-3-\sigma} \norm{ h_{in}}_{W^{j+3+,1}_{0}}. 
  \end{align*}
  The proof of \eqref{ineq:LpGen} follows similarly (using also
  interpolation).
  \end{proof} 
Next, we turn to estimates on the damped part of the electric field.
By Lemmas \ref{lem:PoleExDep} and \ref{lem:HbasBd}, $E^{{(1)}}_{LD}$ satisfies the estimates claimed in Theorem \ref{thm:ElecDec}. 

Turn next to obtaining estimates on $E_{LD}^{(2)}$. 
\begin{lemma}
  There holds the following estimates 
\begin{align}
  \label{eq:FTRE-2}
  \norm{\brak{\grad_x,t\grad_x}^{\sigma} E_{LD}^{(2)}(t)}_{L^2_x}
  & \lesssim \frac{1}{\brak{t}^{5/2}} \norm{h_{in}}_{W^{\sigma+2+0,1}_{0}}  \\ 
  \label{eq:FTRE-infty} 
  \norm{\brak{\grad_x,t\grad_x}^\sigma E_{LD}^{(2)}(t)}_{L^\infty_x}
  & \lesssim \frac{1}{\brak{t}^{4}} \norm{h_{in}}_{W^{\sigma+3+0,1}_{0}}. 
\end{align}
\end{lemma}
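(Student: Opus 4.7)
\bigskip
\noindent\textbf{Proof proposal.} The plan is to estimate $\widehat{E_{LD}^{(2)}}(t,k)$ on the Fourier side, separating into low frequencies $|k| < \nu_0$ (where Lemma~\ref{lem:FTRE} applies and delivers the crucial extra powers of $|k|$ coming from the spectral surgery) and high frequencies $|k| \ge \nu_0$ (where Lemma~\ref{lem:resolvent-high} applies and the exponential factor in $|k|(t-\tau)$ is uniformly effective). At low frequencies, combining $|\cR_{RFT}(t-\tau,k)| \lesssim |k|^3 e^{-\lambda_0|k|(t-\tau)}$ with the Poisson factor $\tfrac{1}{|k|}$ yields the pointwise bound
\begin{equation*}
|\widehat{E_{LD}^{(2)}}(t,k)| \lesssim |k|^2 \int_0^t e^{-\lambda_0|k|(t-\tau)} |\widehat{h_{in}}(k,k\tau)|\, d\tau \qquad (|k| < \nu_0).
\end{equation*}
This extra $|k|^2$ compared with what one has for the free transport electric field is precisely what produces an improvement by one power of $t$ over the rates in Lemma~\ref{lem:HbasBd}.

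To handle the weights $\brak{\grad_x, t\grad_x}^\sigma$, I would use the triangle inequality
$\brak{k, tk}^\sigma \lesssim \brak{k}^\sigma + \brak{(t-\tau) k}^\sigma + \brak{\tau k}^\sigma,$
absorbing $\brak{(t-\tau)k}^\sigma$ into the exponential (losing only a harmless factor of $e^{-\lambda_0 |k|(t-\tau)/2}$) and $\brak{\tau k}^\sigma$ into the regularity of $\widehat{h_{in}}(k, k\tau)$ at the cost of $\sigma$ extra derivatives. Then invoking the elementary bound
$|\widehat{h_{in}}(k, k\tau)| \lesssim \|h_{in}\|_{W^{\sigma + M,1}_0}\, \brak{k, k\tau}^{-M}$
and splitting the time integral at $\tau = t/2$ (exponential smallness on $[0,t/2]$, kinetic decay $\brak{kt}^{-M}$ on $[t/2,t]$) gives
\begin{equation*}
\brak{k, tk}^\sigma |\widehat{E_{LD}^{(2)}}(t,k)|
\lesssim \|h_{in}\|_{W^{\sigma + M,1}_0} \frac{|k|}{\brak{k, kt}^M} \qquad (|k| < \nu_0),
\end{equation*}
up to exponentially small terms.

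Integrating in $k$ using spherical coordinates and the substitution $s = r\sqrt{1+t^2}$ produces, for the $L^2$ norm,
$\int_{|k|<\nu_0} \tfrac{|k|^2}{\brak{k,kt}^{2M}}\, dk \lesssim t^{-5}$ whenever $2M > 5$, and, for the $L^\infty$ norm (bounded by the $L^1_k$ norm of the Fourier transform), $\int_{|k|<\nu_0} \tfrac{|k|}{\brak{k,kt}^M}\, dk \lesssim t^{-4}$ whenever $M > 4$. These are exactly the rates $\brak{t}^{-5/2}$ and $\brak{t}^{-4}$ asserted in the lemma; tracking the derivative count through the splitting yields the loss $\sigma + 2 + 0$ and $\sigma + 3 + 0$ respectively. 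The high-frequency contribution $|k| \ge \nu_0$ is essentially a torus-type Landau damping estimate: since $|k|$ is bounded away from zero and $|\cR(t-\tau,k)| \lesssim |k|^{-1} e^{-\lambda_1|k|(t-\tau)}$, the exponential factor combined with the regularity of $h_{in}$ yields decay faster than any polynomial rate needed, provided sufficient Sobolev regularity.

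The main technical obstacle is bookkeeping of the time-splitting and the $k$-integration to produce the sharp powers of $t$ claimed and to keep the number of derivatives on $h_{in}$ as low as the statement requires. A related subtlety is making sure that when one inserts $\brak{k, tk}^\sigma$, the $\brak{\tau k}^\sigma$ piece is controlled uniformly in $\tau \in [0,t]$ through the same anisotropic Sobolev bound $\brak{k, k\tau}^{-M}$, so that no $\tau$-integral in time ever loses a factor. Once these bookkeeping issues are correctly arranged the two estimates follow in parallel from the same mechanism: the factor $|k|^2$ gained from spectral surgery beats the free-transport rate by exactly one power of $t$ in both the $L^2$ and $L^\infty$ settings.
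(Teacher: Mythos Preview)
Your proposal is correct and follows essentially the same strategy as the paper: split into low and high frequencies, invoke Lemma~\ref{lem:FTRE} (resp.\ Lemma~\ref{lem:resolvent-high}) to gain the crucial factor $|k|^2$ in front of the time convolution, distribute the weight $\brak{k,tk}^\sigma$ between the resolvent exponential and the regularity of $\widehat{h_{in}}$, and split the $\tau$-integral at $t/2$. The only cosmetic difference is that you first perform the $\tau$-integration to obtain a pointwise Fourier bound $\brak{k,tk}^\sigma|\widehat{E_{LD}^{(2)}}(t,k)|\lesssim |k|\brak{kt}^{-M}$ and then integrate in $k$, whereas the paper keeps the double integral and changes variables $k'=\tau k$ or $k'=(t-\tau)k$ on each half; both routes deliver the rates $\brak{t}^{-5/2}$ and $\brak{t}^{-4}$, and the precise regularity indices you assert (like the paper's) are not fully tracked by either argument but are in any case not claimed to be sharp.
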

\begin{proof}
Consider first the low spatial frequencies $\abs{k} \lesssim \nu_0$.  
Compute using Lemma~\ref{lem:FTRE}, for any $a > 0$, 
\begin{align*}
  & \norm{\brak{\grad_x,t \grad_x}^\sigma E_{LD}^{(2)}(t)}_{L^\infty_x}
   \leq \int_0^t \norm{  \brak{\grad_x,t \grad_x}^\sigma \abs{\grad_x}^{-1}
    R_{RFT}(t-\tau)\ast_x \mathfrak H(\tau,\cdot) }_{L^\infty_x} d\tau \\
  & \qquad \lesssim \int_0^t
    \int_{\R^3} \brak{k, (t-\tau) k}^\sigma \abs{k}^{-1}
    \abs{\cR_{RFT}(t-\tau,k)} \brak{k, \tau k}^{\sigma}
    \abs{\widehat{h_{in}}(k,\tau k)} dk d\tau \\
   & \qquad \lesssim \left[ \int_0^t \abs{k}^2 \left( \int_{\R^3} \langle \tau k
     \rangle^{-3-a} \langle (t-\tau) k \rangle^{-3-a} dk \right)
     d\tau \right] \norm{h_{in}}_{W^{\sigma + 3 + a,1}_{0}}. 
\end{align*}
We split the integral
\begin{align*}
  \int_0^t \left( \int_{\R^3} \abs{k}^2 \langle \tau k \rangle^{-3-a} \langle (t-\tau) k \rangle^{-3-a} dk \right)
     d\tau = \left(\int_0^{\frac{t}{2}} + \int_{\frac{t}{2}} ^t\right) \left(\int_{\R^3} \abs{k}^2 \langle \tau k \rangle^{-3-a} \langle (t-\tau) k \rangle^{-3-a} dk \right) d\tau, 
\end{align*}
and change variables $k'=\tau k$ or $k'=(t-\tau)k$ in each one to obtain 
\begin{align*}
  \int_0^t \left( \int_{\R^3}  \abs{k}^2 \langle \tau k \rangle^{-3-a} \langle (t-\tau) k \rangle^{-3-a} dk \right)
  d\tau \lesssim t^{-4}. 
\end{align*}
Note that the $\abs{k}^2$ in the numerator is crucial for obtaining this sharp rate. 
which concludes the proof of~\eqref{eq:FTRE-infty} at low frequencies.
At frequencies bounded away from zero, the estimate is similar though more straightforward and with less loss of regularity.  
The $L^2$
case~\eqref{eq:FTRE-2} follows similarly and is omitted for brevity.
\end{proof}


\subsection{Dispersive estimates of the electric field} \label{sec:Dispersive}

We now consider the `Klein-Gordon part' of the electric field in
Definition~\ref{def:dec-E}.

Consider first the following useful dispersive estimates:
\begin{lemma}[Dispersive estimates for weakly damped
  poles]
  \label{lem:StrichWkDp}
  We have $1 \leq p \leq 2$ there holds for $t \geq 0$ and
  $\mathfrak f = \mathfrak f(x) \in L^p(\R_x^3)$,
  \begin{align}
    \label{ineq:LpDis}
    \norm{e^{p_{\pm}(\grad)t} P_{\leq \nu_0} \mathfrak f}_{L^{p'}} \lesssim
    t^{-3 \left(\frac{1}{p} - \frac{1}{2} \right) }\norm{\mathfrak f}_{L^p}. 
  \end{align}
\end{lemma}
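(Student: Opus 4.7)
The estimate will follow by Riesz--Thorin interpolation between the two endpoint cases $p = 2$ and $p = 1$. For $p = 2$, Plancherel together with $\Re p_\pm(k) = -\lambda(k) \le 0$ (Lemma~\ref{lem:IFT}) immediately gives
\[
\|e^{p_\pm(\nabla)t} P_{\le \nu_0} \mathfrak f\|_{L^2} \le \|\mathfrak f\|_{L^2},
\]
with no $t$-dependence. The nontrivial case is $p = 1$, which reduces to the pointwise kernel bound
\[
\|K_t^\pm\|_{L^\infty} \lesssim t^{-3/2}, \qquad K_t^\pm(x) := (2\pi)^{-3}\int_{\R^3} \chi(k)\, e^{-\lambda(k) t}\, e^{i(k \cdot x \pm \Omega(k) t)}\, dk,
\]
where $\chi \in C_c^\infty(\{|k|\le 2\nu_0\})$ equals $1$ on $\{|k| \le \nu_0\}$.

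\medskip

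I would prove this kernel bound by the method of stationary phase. Writing $y := x/t$, the phase is $\Phi(k;y) := k \cdot y \pm \Omega(k)$ and the amplitude is $a_t(k) := \chi(k) e^{-\lambda(k) t}$. By Lemma~\ref{lem:IFT}, $\nabla \Omega(0) = 0$ and the Hessian $\nabla^2 \Omega(0)$ is a nonzero multiple of the identity (this is the Klein--Gordon character of the long-wave dispersion); after possibly shrinking $\nu_0$, the Hessian remains uniformly nondegenerate on $|k|\le 2\nu_0$, so $k \mapsto \mp \nabla\Omega(k)$ is a diffeomorphism of this ball onto a neighborhood of $0$, and the phase has at most one nondegenerate critical point $k^\ast(y)$ on the support of $\chi$. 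For $y$ in the image of $\mp \nabla \Omega$ on $\{|k|\le 2\nu_0\}$, the standard three-dimensional stationary phase theorem gives $|K_t^\pm(x)| \lesssim t^{-3/2}$ (three half-powers of $t$ from the Hessian determinant); for $y$ outside a fixed neighborhood of this image, $|\nabla_k\Phi| \gtrsim 1$ uniformly on the support of $\chi$, and repeated integration by parts with $L := (it|\nabla_k\Phi|^2)^{-1} \nabla_k\Phi \cdot \nabla_k$ gives $|K_t^\pm(x)| \lesssim_N t^{-N}$ for any $N$. Riesz--Thorin between the two endpoints then yields the full range $1 \le p \le 2$ with decay $t^{-3(1/p - 1/2)}$.

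\medskip

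\textbf{Main obstacle.} The technical heart of the argument is uniform-in-$t$ control of the amplitude $a_t$ in the relevant $C^N$ norm. Derivatives of $e^{-\lambda(k) t}$ produce polynomial factors in $t$, and because $\lambda$ vanishes to infinite order at $k = 0$ (see \eqref{eq:lamEsts}) rather than merely polynomially, the naive ratio $|\nabla^j\lambda|/\lambda^r$ can blow up as $k \to 0$, so the raw derivative bounds on $a_t$ need not be uniform in $t$. The escape is to exploit $\lambda(k) \ge 0$ together with the basic fact $\sup_{s\ge 0} s^j e^{-s}<\infty$: by Fa\`a di Bruno, each term in $\nabla^\alpha a_t$ is a product of the form $t^r \prod_i \nabla^{\alpha_i}\lambda(k)\cdot e^{-\lambda(k)t}$, which, using $|\nabla^{\alpha_i}\lambda(k)|\lesssim_M |k|^M$ for any $M$ and the trade-off $(\lambda(k)t)^j e^{-\lambda(k)t}\le C_j$, can be bounded by a $t$-independent constant on $|k|\le 2\nu_0$ after splitting into the regimes $\lambda(k)t\le 1$ and $\lambda(k)t\ge 1$. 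Once uniform $C^N$ bounds on $a_t$ are established, the stationary phase and integration-by-parts constants become absolute and the kernel estimate follows.
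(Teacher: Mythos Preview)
Your overall architecture---Plancherel for $p=2$, a pointwise kernel bound for $p=1$, and Riesz--Thorin in between---matches the paper's, and the stationary-phase strategy for the kernel is the right idea. The gap is in your decision to put the damping factor $e^{-\lambda(k)t}$ into the \emph{amplitude} $a_t$. The uniform-in-$t$ $C^N$ bound on $a_t$ that you claim cannot be extracted from the information available, and is in fact expected to fail. Already one derivative gives $|\nabla a_t(k)| = t\,|\nabla\lambda(k)|\,e^{-\lambda(k)t}$; along the curve $\lambda(k)t=1$ this equals $e^{-1}\,|\nabla\lambda(k)|/\lambda(k)$. Lemma~\ref{lem:IFT} only provides \emph{upper} bounds $|\nabla^j\lambda(k)|\lesssim_N|k|^N$ and never a lower bound on $\lambda$; with the physically expected profile $\lambda(k)\sim C|k|^{-3}e^{-B/|k|^2}$ one gets $|\nabla\lambda|/\lambda\sim|k|^{-3}\to\infty$. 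Your splitting into $\lambda t\le1$ and $\lambda t\ge1$ does not rescue this: in either regime the Fa\`a di Bruno term $t^r\prod_i\nabla^{\alpha_i}\lambda\cdot e^{-\lambda t}$ is controlled by $\lambda(k)^{-r}\prod_i|\nabla^{\alpha_i}\lambda(k)|$, and the infinite-order vanishing of $\lambda$ at $k=0$ makes $\lambda^{-r}$ overwhelm any polynomial gain $|k|^M$ coming from the derivative factors.

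The paper avoids this entirely by treating the full complex exponent $it\Omega(k)-\lambda(k)t$ as the \emph{phase}. After a Morse change of variables (illustrated first for $\Omega(k)=|k|^2$, so $y=k+x/(2t)$), the non-stationary region is handled by integrating by parts with
\[
D\mathfrak f:=\nabla_y\cdot\left(\frac{-2iy-\nabla\lambda}{|2iy-\nabla\lambda|^2}\,\mathfrak f\right),
\qquad
\int e^{it|y|^2-\lambda t}\,\mathfrak f\,dy=\frac{1}{t}\int e^{it|y|^2-\lambda t}\,D\mathfrak f\,dy.
\]
Since $|2iy-\nabla\lambda|^2=4|y|^2+|\nabla\lambda|^2\ge4|y|^2$ and only \emph{boundedness} of $\nabla^j\lambda$ is needed (no ratios $|\nabla\lambda|/\lambda$ ever appear), one gets $|D^N(\cdots)|\lesssim|y|^{-2N}$ with $t$-independent constants. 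Each application of $D$ extracts one clean power of $t^{-1}$; the stationary ball $|y|\lesssim t^{-1/2}$ contributes $O(t^{-3/2})$ by volume, and the two pieces combine to $\|K(t)\|_{L^\infty}\lesssim t^{-3/2}$. The fix to your argument, then, is to move $e^{-\lambda t}$ from amplitude to phase and run the non-stationary integration by parts against the complex gradient $2iy-\nabla\lambda$ rather than the purely imaginary one.
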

\begin{remark}
  The linear propagator $e^{p_{\pm}(\grad)t}$ is not a unitary
  operator, and the standard $TT^*$ argument as in e.g. \cite{KT98,TaoTextbook} do not apply.
  Hence, it is not as trivial to obtain the homogeneous Strichartz estimates or the
  full range of expected inhomogeneous Strichartz estimates (although some inhomogeneous Strichartz estimates follow immediately from \eqref{ineq:LpDis} and the O'Neil Young convolution inequality \cite{ON63}).
\end{remark} 
\begin{proof}
The case $p=2$ is immediate.
For the case $p=1$, write
\begin{align}
  e^{p_{\pm}(\grad)t} P_{\leq \nu_0} \mathfrak f = K(t) \ast_x
  \mathfrak f,
\end{align}
with the integral kernel 
\[
  K(t,x) := \frac{1}{(2\pi)^{d/2}} \int_{\R^3} e^{i x \cdot k + i t
    \Omega(k) - \lambda(k) t} a_{\nu_0}(k) dk,
\]
where $a_{\nu_0}$ is a Schwartz class function compactly supported in
a ball of radius $\leq 2\nu_0$ corresponding to the Littlewood-Paley
projection.  Hence the $p=1$ case follows from the pointwise kernel
estimate
\begin{align}
  \label{eq:estim-Kt}
  \norm{K(t)}_{L^\infty} \lesssim t^{-d/2} \qquad (\forall \, t \geq 0). 
\end{align}
The intermediate exponents $p \in (1,2)$ are then obtained by the
Riesz-Thorin interpolation theorem.

Let us prove~\eqref{eq:estim-Kt}. Despite the complex phase,  $K$ is essentially a standard
oscillatory integral, and we may easily adapt the standard arguments as in e.g. [Proposition 6, pg 344 \cite{BigStein}].  Let us first
explain the argument assuming that $\Omega(k) = \abs{k}^2$ (the case
$\Omega(k) = \omega_p^2 + \frac{3T}{m_e}\abs{k}^2$ is the same).  In
this case, we make the change of variables $y = k + \frac{x}{2t}$ and
we write (for some scale $\eps$ chosen below),
\begin{align*}
  K(t,x) & = \frac{e^{i\abs{x}^2/4t^2}}{(2\pi)^{d/2}} \int_{\R^3}
           e^{i t \abs{y}^2 - \lambda t} \chi \left( \frac{y}{\eps}
           \right) a_{\nu_0} \left( y - \frac{x}{2t} \right) dy \\
         & \quad + \frac{e^{i\abs{x}^2/4t^2}}{(2\pi)^{d/2}}
           \int_{\R^3} e^{i t \abs{y}^2 - \lambda t}
           \left[ 1 - \chi\left(\frac{y}{\eps}\right) \right]
           a_{\nu_0} \left( y - \frac{x}{2t} \right) dy \\
         & =: K_S(t,x) + K_{NS}(t,x), 
\end{align*}
where $\chi \in C^\infty_c(B(0,1))$ with $\chi(z) = 1$ for
$\abs{z} \leq 1/2$, and where we have split $K$ into a 'stationary'
part and a 'non-stationary' part. For the stationary part $K_S$ we
simply bound the integrand and estimate the volume of integration:
\begin{align}
\abs{K_S(t,x)} \lesssim_{\nu_0} \eps^{d}.  \label{ineq:Ks}
\end{align}
For the non-stationary part $K_{NS}$ we integrate by parts using the non-vanishing of the phase.
In particular observe that (note that $\abs{\cdot}^2 = x^\ast x$ for $x \in \Complex^n$), 
\begin{align*}
t\abs{2iy - \grad \lambda}^2   e^{it \abs{y}^2 - \lambda t} = \left(-2iy - \grad \lambda\right) \cdot \grad_y e^{it \abs{y}^2 - \lambda t}. 
\end{align*}
Therefore, if we define the differential operator 
\begin{align*}
D \mathfrak{f} := \grad_y \cdot \left( \frac{-2iy - \grad \lambda}{\abs{2iy - \grad \lambda}^2} \mathfrak{f} \right), 
\end{align*}
then by repeated integration by parts we have, 
\begin{align*}
\abs{K_{NS}} \lesssim \frac{1}{t^N}\abs{ \int_{\R^3} e^{i t \abs{y}^2 - \lambda t} D^{N} \left((1 - \chi(\frac{y}{\eps})) a_{\nu_0}( y - \frac{x}{2t})\right) dy }. 
\end{align*}
We next show that the integrand is bounded by $\abs{y}^{-2N}$; for the proof in three dimensions, we only need $N=1,2$ though it holds for all $N$. 
The case $N=1$ is easily checked. Indeed,
\begin{align*}
\partial_j \left( \frac{-2iy - \grad \lambda}{\abs{2iy - \grad \lambda}^2} \right) = -\frac{-2ie_j - \partial_j\grad \lambda}{\abs{2iy - \grad \lambda}^2} = \mathcal{O}(\abs{y}^{-2}), 
\end{align*}
which is sufficient as the terms in which the derivative lands elsewhere are only better (estimates on $\grad^n \lambda$ are provided by Lemma \ref{lem:IFT}). 
For $N=2$ we analogously have 
\begin{align*}
\partial_\ell \left(\left( \frac{-2iy - \grad \lambda}{\abs{2iy - \grad \lambda}^2} \right) \frac{-2ie_j - \partial_j\grad \lambda}{\abs{2iy - \grad \lambda}^2} \right) & = - \left( \frac{-2ie_\ell - \partial_\ell \grad \lambda }{\abs{2iy - \grad \lambda}^2} \right)\left( \frac{-2ie_j - \partial_j\grad \lambda}{\abs{2iy - \grad \lambda}^2} \right)  \\
& \hspace{-3cm} +  \left( \frac{-2iy - \grad \lambda}{\abs{2iy - \grad \lambda}^2} \right)\left( \frac{\partial_{\ell j}\grad \lambda}{\abs{2iy - \grad \lambda}^2} - \frac{
\left(-2ie_\ell - \partial_\ell \grad \lambda\right) \left(2iy-\grad \lambda\right)\cdot \left(-2ie_j - \partial_j \grad \lambda\right) }{\abs{2iy - \grad \lambda}^4}\right) \\
& = \mathcal{O}(\abs{y}^{-4}), 
\end{align*}
which is similarly sufficient (note the pattern that selects a particular dominant term whereas the more complicated error terms are smaller, hence the desired estimates hold for all $N$).
Therefore, provided we choose $N \geq 2$, 
\begin{align*}
\abs{K_{NS}} \lesssim \frac{1}{t^N} \int_{\abs{y} \geq \epsilon} \frac{1}{\abs{y}^{2N}} dy  \lesssim \frac{1}{t^N} \epsilon^{d - 2N}. 
\end{align*}
Hence making the choice $\epsilon \sim t^{-1/2}$ gives the result when combined with \eqref{ineq:Ks}. 
The case using the true $\Omega(k)$ follows by Morse's lemma due to \eqref{eq:OmegaO3} and the other estimates in Lemma \ref{lem:IFT} (see [Proposition 6, pg 344 \cite{BigStein}] for more details).
This completes the main dispersive estimates \eqref{ineq:LpDis}. 

\end{proof}

The estimates on $E_{KG}^{(1)}$ in Theorem \ref{thm:ElecDec} follow immediately from \eqref{ineq:LpDis} and the decomposition \ref{def:dec-E} upon observing that for $\int_{\R^3} \rho_{in} dx = 0$ we have for all $1 < p$, 
\begin{align*}
\norm{\grad_x (-\Delta_x)^{-1} P_{\leq \nu_0} \rho_{in}}_{L^p} \lesssim_{p,\nu_0} \norm{\brak{x}\rho_{in}}_{W^{0,1}_{0}}. 
\end{align*}
Next, we will prove the pointwise-in-time decay estimates on $E_{KG}^{(2)}$.
\begin{lemma} \label{lem:OscError}
  There holds for all $2 \leq p \leq \infty$,
  \begin{align*}
    \norm{ E_{KG}^{(2)} }_{L^p_x} \lesssim \brak{t}^{-3\left(\frac{1}{2} - \frac{1}{p} \right)} \norm{h_{in}}_{W^{3/2,1}_{5}}. 
\end{align*}
\end{lemma}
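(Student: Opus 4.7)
The plan is to combine the Duhamel representation of $E_{KG}^{(2)}$ from Definition~\ref{def:dec-E} with the Klein-Gordon dispersive estimate established in Lemma~\ref{lem:StrichWkDp} and the free transport decay of Lemma~\ref{lem:HbasBd}. Using $\cR_{KG}(s,k)=J_+(k)e^{p_+(k)s}+J_-(k)e^{p_-(k)s}$ and the identification $\grad_\eta^5 \widehat{h_{in}}(k,k\tau)=(-i)^5\widehat{v^{\otimes 5}h_{in}}(k,k\tau)$, I would rewrite
$$E_{KG}^{(2)}(t)=\sum_\pm\int_0^t e^{p_\pm(\nabla)(t-\tau)}P_{\leq\nu_0}\,M_\pm(\nabla)\bigl[\nabla_x^{\otimes 4}:\mathcal{S}(\tau)\bigr]\,d\tau,$$
where $\mathcal{S}(\tau,x):=\int_{\R^3}v^{\otimes 5}h_{in}(x-v\tau,v)\,dv$ is the free transport of the fifth $v$-moment, and $M_\pm(\nabla)$ is a smooth bounded Fourier multiplier supported in $\{|k|<\nu_0\}$: one factor of $k$ from $k^{\otimes 5}$ is peeled off to tame the Riesz symbol $ik/|k|^2$, leaving behind the bounded multiplier $ik\otimes k/|k|^2$ and four spatial derivatives $\nabla_x^{\otimes 4}$.

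For $p\in[2,\infty]$, duality and interpolation applied to Lemma~\ref{lem:StrichWkDp} give the propagator bound $\norm{e^{p_\pm(\nabla)s}P_{\leq\nu_0}f}_{L^p}\lesssim s^{-3(1/2-1/p)}\norm{f}_{L^{p'}}$. I would pair this with the $L^{p'}$ free-transport decay of the source obtained from a sharpened version of~\eqref{ineq:LpGen}: rewriting $v^{\otimes 5}=v\cdot v^{\otimes 4}$ matches four $\nabla_x$ derivatives with four $v$-weights (yielding a $\brak{\tau}^{-4}$ gain), while the additional $L^{p'}$ loss is only $3(1-1/p')$ derivatives (via the same stationary-phase argument underlying~\eqref{ineq:HL2}, rather than the cruder $3$ of \eqref{ineq:HLinf}). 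This produces $\norm{\nabla_x^{\otimes 4}\mathcal{S}(\tau)}_{L^{p'}}\lesssim\brak{\tau}^{-4-3(1-1/p')}\norm{h_{in}}_{W^{3/2,1}_5}$, so that Minkowski in $\tau$ yields
$$\norm{E_{KG}^{(2)}(t)}_{L^p}\lesssim\Bigl(\int_0^t(t-\tau)^{-3(1/2-1/p)}\brak{\tau}^{-4-3(1-1/p')}\,d\tau\Bigr)\norm{h_{in}}_{W^{3/2,1}_5}.$$

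For $t\geq 2$, I would split the $\tau$-integral at $t/2$: on $[0,t/2]$, factor out $(t-\tau)^{-3(1/2-1/p)}\lesssim t^{-3(1/2-1/p)}$ and use integrability of $\brak{\tau}^{-4-3(1-1/p')}$ over $[0,\infty)$. The hard part is the endpoint $\tau\to t$, where the dispersive kernel is non-integrable for $p$ close to $\infty$ (in particular at $p=\infty$, exponent $3/2$). To treat $[t/2,t]$, I would replace the dispersive estimate by an elementary Bernstein-type bound: since $|e^{p_\pm(k)s}|=e^{-\lambda(k)s}\leq 1$ on the compact Fourier support of $P_{\leq\nu_0}$, one has $\norm{e^{p_\pm(\nabla)s}P_{\leq\nu_0}f}_{L^p}\lesssim\norm{f}_{L^2}$ uniformly in $s\geq 0$, and the $L^2$ source decay $\brak{\tau}^{-11/2}$ gives an $\mathcal{O}(t^{-9/2})$ contribution, far stronger than $\brak{t}^{-3(1/2-1/p)}$. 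For $t\leq 2$, the same uniform Bernstein bound on the full interval $[0,t]$ produces the $\mathcal{O}(1)$ bound that matches $\brak{t}^{-3(1/2-1/p)}\asymp 1$. The main technical care, beyond this endpoint splitting, lies in using the refined $L^{p'}$ stationary-phase loss so as to close at the threshold regularity $W^{3/2,1}_5$ stated in the lemma, rather than the more wasteful $W^{7+0,1}_5$ that a naive application of~\eqref{ineq:LpGen} at $p'=1$ would produce.
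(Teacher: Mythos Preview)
Your approach is essentially the same as the paper's: both write $E_{KG}^{(2)}$ as a Duhamel integral of the damped Klein--Gordon propagator acting on a free-transport source, apply Minkowski in $\tau$, and pair the dispersive estimate of Lemma~\ref{lem:StrichWkDp} with the moment decay of Lemma~\ref{lem:HbasBd}. The only tactical difference is that the paper introduces an auxiliary exponent $r\in(1,2)$ (using $L^r\to L^{r'}$ dispersion followed by Bernstein $L^{r'}\to L^p$ on the compact Fourier support) and then takes a minimum with the uniform $L^2\to L^p$ bound, whereas you fix $r=p'$ and handle the non-integrable $\tau\to t$ endpoint by a separate $L^2\to L^p$ Bernstein bound on $[t/2,t]$; the two devices serve the same purpose. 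One small overstatement: your claimed $L^2$ source decay $\brak{\tau}^{-11/2}$ does not follow at the threshold regularity $W^{3/2,1}_5$ (at that regularity the $|k|^4$ prefactor is swamped by the $|k|\sim\nu_0$ region and one only gets $\brak{\tau}^{-3/2}$); this is precisely where the paper's freedom to choose $r$ close to $2$---keeping the dispersive kernel integrable near $\tau=t$ while placing the source in an $L^r$ space with $r$ close to $2$---is doing real work, and is the point you correctly flag as ``the main technical care''.
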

\begin{proof}
Define $T$ as above:
\begin{align*}
  T(t,x) = \int_{\R^3}  v^{\otimes 5} h_{in}(x-tv,v) dv, \qquad
  \widehat{T}(t,k) = i^5 \grad_\eta^{\otimes 5} \widehat{h_{in}}(k,0). 
\end{align*}
By Minkowski's inequality (and boundedness of the prefactor multipliers on $L^p$ as they are Schwartz class functions), 
\begin{align*}
  \norm{E_{KG}^{(2)}}_{L^p_x}  \lesssim \sum_{+,-}\int_0^t
  \norm{e^{p_{\pm}(\grad_x) (t-\tau)} P_{\leq \nu_0} \abs{\grad_x}^{-1}  \grad_x^{5} : T}_{L^{p}_x} d\tau. 
\end{align*}
On the one hand, by Bernstein's inequality, for any $p \geq r' > 2$,
it follows from \eqref{ineq:LpDis} that we have
\begin{align}
  \norm{e^{p_{\pm}(\grad_x) (t-\tau)} \abs{\grad_x}^{-1} \left( \grad_x^{\otimes 5} : T \right)}_{L^{p}_x}  & \lesssim \norm{e^{p_{\pm}(\grad) (t-\tau)} P_{\leq \nu_0}
    \abs{\grad_x}^{-1} \left( \grad_x^{\otimes 5} : T\right)}_{L^{r'}_x} \\
  & \lesssim \frac{1}{\abs{t-\tau}^{3\left(\frac{1}{r} -
    \frac{1}{2}\right)}} \norm{P_{\leq \nu_0} \abs{\grad_x}^{-1} \left( \grad_x^{\otimes 5} T \right)}_{L^{r}_x}. 
\end{align}
On the the other hand, we similarly have
\begin{align}
  & \norm{e^{p_{\pm}(\grad_x) (t-\tau)} \abs{\grad_x}^{-1} \left(\grad_x^{\otimes 5} : T \right) }_{L^{p}_x} \lesssim \norm{P_{\leq \nu_0} \abs{\grad_x}^{-1} \left(\grad_x^{5} : T \right)}_{L^{2}_x}. 
\end{align}
Therefore, by Lemma \ref{lem:HbasBd}, we have for $r > 1$,
\begin{align*}
  \norm{E_{KG}^{(2)}}_{L^p_x}  \lesssim \norm{h_{in}}_{W^{3/2,1}_{5}} \sum_{+,-}\int_0^t \min\left(\brak{\tau}^{-3/2-3/2},
  \abs{t-\tau}^{-3\left(\frac{1}{r} - \frac{1}{2}\right)} \brak{\tau}^{-3(1-\frac{1}{r}) - \frac{3}{2}} \right) d\tau, 
\end{align*}
which integrates to imply the stated result 
\end{proof} 

\section{Decomposition and scattering for the distribution
  function} \label{sec:DF} In this section we prove Theorem \ref{thm:Fdecom}. Denote the solution
in the free transport moving frame
\begin{align*}
  g(t,x,v) := h(t,x + vt,v), 
\end{align*}
which satisfies $\partial_t g = -E(t,x+tv)\cdot \grad_v
f^0(v)$. Therefore we have on the Fourier side (note
$\widehat{h}(t,k,\eta) = \widehat{g}(t,k,\eta+kt)$),
\begin{align}
  \label{eq:f}
  \widehat{g}(t,k,\eta) = \widehat{h_{in}}(k,\eta)
  - \int_0^t \widehat{E}(\tau,k) \cdot \widehat{\grad_v f^0}(\eta-k\tau) d\tau. 
\end{align}
Consider the contribution $E_{KG}^{(1)}$:
\begin{align*}
  & \int_0^t \widehat{E}_{KG}^{(1)}(\tau,k) \cdot
    \widehat{\grad_v f^0}(\eta-k\tau) d\tau \\
  & =  w_0\frac{ik}{|k|^2} \sum_{j=0}^\ell \sum_{+,-} \int_0^t
    e^{p_{\pm}(k) \tau} A_j^{\pm}(k)
    \left( k^{\otimes j} : \grad_\eta^j \widehat{h_{in}}(k,0) \right) \cdot \widehat{\grad_v f^0}(\eta-k\tau) d\tau \\
  & = w_0\frac{ik}{|k|^2} \sum_{j=0}^\ell \sum_{+,-} e^{p_{\pm}(k) t}
    \frac{A_j^{\pm}(k)}{p_{\pm}(k)}
    \left( k^{\otimes j} : \grad_\eta^j \widehat{h_{in}}(k,0) \right) \cdot \widehat{\grad_v f^0}(\eta-kt)  \\
  & \quad - w_0\frac{ik}{|k|^2} \sum_{j=0}^\ell \sum_{+,-}
    \frac{A_j^{\pm}(k)}{p_{\pm}(k)}
    \left( k^{\otimes j} : \grad_\eta ^j \widehat{h_{in}}(k,0) \right) \cdot \widehat{\grad_v f^0}(\eta)  \\
  & \quad - w_0\frac{ik}{|k|^2} \sum_{j=0}^\ell \int_0^t e^{p_{\pm}(k)
    \tau}\frac{A_j^{\pm}(k)}{p_{\pm}(k)}
    \left( k^{\otimes j} : \grad_\eta ^j \widehat{h_{in}}(k,0) \right)
    \cdot
     \grad_\eta \widehat{\grad_v f^0}(\eta-k\tau) k \,\, d\tau \\
  & =: \widehat{g_{KG}} + \widehat{g_2} + \widehat{g_3}, 
\end{align*}
with $h_{KG},h_2,$ and $h_3$ defined analogously. 
Note that since $\widehat{h_{KG}}(t,k,\eta) = \widehat{g_{KG}}(t,k,\eta + kt)$, 
\begin{align*}
  \widehat{h_{KG}}(t, k,\eta) = \tilde{E}_{KG}(t,k) \cdot \widehat{\grad_v f^0}(\eta), 
\end{align*}
where we define
\begin{align*}
\tilde{E}_{KG}(t,k) := w_0\frac{ik}{|k|^2} \sum_{j=0}^4 \sum_{+,-}
  e^{p_{\pm}(k) t}
  \frac{A_j^{\pm}(k)}{p_{\pm}(k)} \left( k^{\otimes j} : \grad_\eta ^j \widehat{h_{in}}(k,0) \right). 
\end{align*}
Arguing as for the $E_{KG}^{(1)}$ estimates in Theorem \ref{thm:ElecDec}, $h_{KG}$ satisfies \eqref{ineq:hKGStrich}.
Similarly, we define
\begin{align}
  \label{eq:hFT}
  \widehat{h_{LD}}(k,\eta) = \widehat{h_{in}}(k,\eta) + \widehat{g_2}(k,\eta) + \widehat{g_3}(t,k,\eta)  -
  \int_0^t \left( \widehat{E_{KG}}^{(2)} + \widehat{E_{LD}} \right)(\tau,k) \cdot \widehat{\grad_v f^0}(\eta-k\tau) d\tau. 
\end{align}
The term $g_2$ is constant in time and in $L^p$ for all $p \geq 2$ by
the assumptions on the initial data.
The term $g_3$ on the physical side is written in the form
\begin{align}
  h_3(t,x,v) = \int_0^t \grad_x\tilde{E}_3(\tau,x+\tau v) : \left( v \otimes \grad_v f^0(v) \right)  d\tau, 
\end{align}
for a suitable $\tilde{E}_3$. By straightforward variations of the
arguments used to estimate $E_{KG}^{(1)}$ above, we have for any
$6 < p$,
\begin{align*}
  \int_0^t \norm{\grad_x\tilde{E}_3(\tau,x+\tau v) : \left( v \otimes\grad_v
  f^0(v) \right)}_{L^p_{x,v}} d\tau
  & \lesssim \int_0^t \norm{\grad_x\tilde{E}_3(\tau,\cdot)}_{L^p_{x}}
    d\tau \\
  & \lesssim \norm{h_{in}}_{W^{3+0,1}_{5}}
    \int_0^t \brak{\tau}^{-3\left(\frac{1}{2} - \frac{1}{p} \right)} d\tau,  
\end{align*}
and hence $h_3$ converges in $L^p_{x,v}$ for all $p > 6$ as
$t \to \infty$.  Due to the decay estimates in Lemma
\ref{lem:StrichWkDp}, the term in \eqref{eq:hFT} involving
$E_{KG}^{(2)}$ similarly converges in $L^p_{x,v}$ for all $p > 6$.

Next we prove that $h_{LD}$ converges in $L^p_{x,v}$ for $p > 6$.  The
easiest contribution is from $E_{LD}$. Recall the decomposition
$E_{LD} = E_{LD}^{(1)} + E_{LD}^{(2)}$ from Subsection \ref{sec:FTest}. From a straightforward variant of
Lemma \ref{lem:HbasBd}, we have for all $2 \leq p \leq \infty$
\begin{align*}
  \norm{E_{LD}(t)}_{L^p} \lesssim \brak{t}^{-4 + \frac{3}{p}} \norm{h_{in}}_{W^{4+0,1}_{0}}, 
\end{align*}
and hence, for $p \geq 2$,
\begin{align*}
  \int_0^t \norm{ E_{LD} (\tau, x + \tau v) \cdot \grad_v f^0(v)
  }_{L^p_{x,v}} d\tau
  \leq \int_0^t \norm{ E_{LD}(\tau)}_{L^p_x} \norm{\grad_v f^0
  }_{L^p_{v}} d\tau
  \lesssim \norm{h_{in}}_{W^{4+0,1}_{0}}, 
\end{align*}
and so the corresponding contribution converges as $t \to \infty$.



\phantomsection
\addcontentsline{toc}{section}{References}
\bibliographystyle{abbrv}
\bibliography{eulereqns}

\end{document}